\newcommand*\linenomathpatch[1]{%
\expandafter\pretocmd\csname #1\endcsname {\linenomath}{}{}%
\expandafter\pretocmd\csname #1*\endcsname{\linenomath}{}{}%
\expandafter\apptocmd\csname end#1\endcsname {\endlinenomath}{}{}%
\expandafter\apptocmd\csname end#1*\endcsname{\endlinenomath}{}{}%
}
\newcommand*\linenomathpatchAMS[1]{%
\expandafter\pretocmd\csname #1\endcsname {\linenomathAMS}{}{}%
\expandafter\pretocmd\csname #1*\endcsname{\linenomathAMS}{}{}%
\expandafter\apptocmd\csname end#1\endcsname {\endlinenomath}{}{}%
\expandafter\apptocmd\csname end#1*\endcsname{\endlinenomath}{}{}%
}
\let\linenomathAMS\linenomathWithnumbers
\patchcmd\linenomathAMS{\advance\postdisplaypenalty\linenopenalty}{}{}{}
\let\linenomathAMS\linenomathNonumbers
\theoremstyle{plain}
\newtheorem{theorem}{Theorem}[section]
\crefname{theorem}{Theorem}{Theorems}
\newtheorem{proposition}[theorem]{Proposition}
\crefname{proposition}{Proposition}{Propositions}
\newtheorem{corollary}[theorem]{Corollary}
\crefname{corollary}{Corollary}{Corollaries}
\newtheorem{lemma}[theorem]{Lemma}
\crefname{lemma}{Lemma}{Lemmas}
\crefname{conjecture}{Conjecture}{Conjectures}
\crefname{problem}{Problem}{Problem}
\newtheorem{claim}[theorem]{Claim}
\crefname{claim}{Claim}{Claims}
\crefname{observation}{Observation}{Observations}
\crefname{setup}{Setup}{Setups}
\crefname{fact}{Fact}{Facts}
\crefname{algorithm}{Algorithm}{Algorithms}
\newtheorem{remark}[theorem]{Remark}
\crefname{remark}{Remark}{Remarks}
\newtheorem{example}[theorem]{Example}
\crefname{example}{Example}{Examples}
\theoremstyle{definition}
\newtheorem{definition}[theorem]{Definition}
\crefname{definition}{Definition}{Definitions}
\crefname{construction}{Construction}{Constructions}
\crefname{question}{Question}{Questions}
\numberwithin{equation}{section}
\crefname{section}{Section}{Sections}
\crefname{appendix}{Appendix}{Appendix}
\newenvironment{proofclaim}[1][Proof of the claim]{\begin{proof}[#1]}{\end{proof}}
\def\COMMENT#1{}
\DeclareMathOperator{\probability}{\mathbb{P}}
\newcommand{\eps}{\varepsilon}
\renewcommand{\rho}{\varrho}
\renewcommand{\subset}{\subseteq}
\newcommand{\NATS}{\mathbb{N}}
\newcommand{\cC}{\mathcal{C}}
\newcommand{\cE}{\mathcal{E}}
\newcommand{\cF}{\mathcal{F}}
\newcommand{\cG}{\mathcal{G}}
\newcommand{\cH}{\mathcal{H}}
\newcommand{\cP}{\mathcal{P}}
\newcommand{\cQ}{\mathcal{Q}}
\newcommand{\cR}{\mathcal{R}}
\newcommand{\cV}{\mathcal{V}}
\newcommand{\cX}{\mathcal{X}}
\DeclareMathOperator{\hf}{\mathsf{HF}}
\newcommand{\bN}{\mathbb{N}}
\newcommand{\sP}{\mathsf{P}}
\newcommand{\ori}[1]{\smash{\overrightarrow{#1}}}
\DeclareMathOperator{\Deg}{\mathsf{Deg}}
\DeclareMathOperator{\oG}{G}
\DeclareMathOperator{\til}{\mathsf{Til}} 
\DeclareMathOperator{\mat}{\mathsf{Mat}}
\DeclareMathOperator{\ham}{\mathsf{Ham}} 
\DeclareMathOperator{\powham}{\mathsf{PowHam}}
\DeclareMathOperator{\den}{\mathsf{Den}}
\let\th\relax
\DeclareMathOperator{\th}{\delta}
\DeclareMathOperator{\HamConn}{\mathsf{HamCon}} 
\DeclareMathOperator{\HC}{\HamConn} 
\DeclareMathOperator{\SHC}{\mathsf{StrHamCon}}
\newcommand{\DegF}[2]{\mathsf{\Deg}_{#1,\,#2}}
\newcommand{\DegSeq}{{\mathsf{DegSeq}}}
\newcommand{\PG}[3]{{P^{(#3)}}(#1,#2)}
\DeclareMathOperator{\val}{val}
\DeclareMathOperator{\con}{con}
\DeclareMathOperator{\anc}{anc}
\newcommand{\Eval}{\cE_{\val}}
\newcommand{\Econ}{\cE_{\con}}
\newcommand{\Eanc}{\cE_{\anc}}
\theoremstyle{definition}
\newtheorem*{connectivity}{Connectivity}
\newtheorem*{space-prop}{Space}
\newtheorem*{aperiodicity}{Aperiodicity}
\date{\today}
\title[Robust Hamiltonicity]{Robust Hamiltonicity}
\author[F.~Joos]{Felix Joos}
\author[R.~Lang]{Richard Lang}
\author[N.~Sanhueza-Matamala]{Nicolás Sanhueza-Matamala}
\thanks{The research leading to these results was supported by the Deutsche Forschungsgemeinschaft (DFG, German Research Foundation) -- 428212407 (F. Joos), H2020-MSCA -- 101018431 (R. Lang) and ANID-Chile through the FONDECYT Iniciación Nº11220269 grant (N. Sanhueza-Matamala).}
\address[F.~Joos]{Institut f\"ur Informatik, Universit\"at Heidelberg, Germany}
\email{joos@informatik.uni-heidelberg.de}
\address[R.~Lang]{Departament de Matemàtiques, Universitat Politècnica de Catalunya, Barcelona, Spain}
\email{richard.lang@upc.edu}
\address[N.~Sanhueza-Matamala]{Departamento de Ingeniería Matemática, Facultad de Ciencias Físicas y Matemáticas, Universidad de Concepción, Chile}
\email{nicolas@sanhueza.net}
\begin{document}

\begin{abstract}
	We study conditions under which a given hypergraph is randomly robust Hamiltonian, which means that a random sparsification of the host graph contains a Hamilton cycle with high probability.
	Our main contribution provides nearly optimal results whenever the host graph is Hamilton connected in a locally robust sense, which translates to a typical induced subgraph of constant order containing Hamilton paths between any pair of suitable ends.

	The proofs are based on the recent breakthrough on Talagrand's conjecture, which reduces the problem to specifying a distribution on the desired guest structure in the (deterministic) host structure.
	We find such a distribution via a new argument that reduces the problem to the case of perfect matchings in a higher uniformity.

	As applications, we obtain asymptotically optimal results for perfect tilings in graphs and hypergraphs both in the minimum degree and uniformly dense setting.
	We also prove random robustness for powers of cycles under asymptotically optimal minimum degrees and degree sequences.
	We solve the problem for loose and tight Hamilton cycles in hypergraphs under a range of asymptotic minimum degree conditions.
	This includes in particular $k$-uniform tight Hamilton cycles under minimum $d$-degree conditions for $1\leq k-d \leq 3$.
	In all cases, our bounds on the sparseness are essentially best-possible.
\end{abstract}

\maketitle

\thispagestyle{empty}
\vspace{-0.8cm}

\section{Introduction}

An old question in combinatorics is under which conditions a given graph $G$ contains a particular (vertex) spanning substructure.
For instance, Dirac's classic theorem~\cite{Dir52} determines the optimal requirements on the minimum degree for a Hamilton cycle: when $G$ has $n \geq 3$ vertices it suffices that $\delta(G) \geq n/2$, and this is best possible.
Over the past decades the study of minimum degree conditions and other simple conditions that force vertex spanning substructures has developed into its own branch of combinatorics.
While our understanding of these phenomena has greatly expanded, many open questions remain.

A related field in combinatorics concerns the study spanning substructures in random graphs.
For instance, Pósa~\cite{Pos76} and Korshunov~\cite{Kor76} showed, independently, that the binomial random graph $\oG(n,p)$ typically contains a Hamilton cycle  provided that $p$ is somewhat greater than $\log n /n$.

It is natural to ask whether these two branches can be combined.
The study of such \emph{robustness problems} was initiated by Krivelevich, Lee and Sudakov~\cite{KLS14}.
For an $n$-vertex graph $G$ with $\delta(G) \geq n/2$ and $p = \Omega(\log n / n)$, it was shown that the random sparsification $G_p$, obtained by keeping each edge of $G$ independently with probability $p$, typically contains a Hamilton cycle.
We investigate similar robustness problems for Hamiltonicity in hypergraphs.

Our main result (\cref{thm:main}) offers a framework for embedding a range of guest structures into host structures under asymptotically optimal conditions.

\smallskip

\textbf{Host structure.} Our results apply to classes of host (hyper)graphs that are hereditary in the sense of being approximately closed under taking random induced subgraphs of constant order.
This goes beyond the usually considered minimum degree conditions and includes for instance also classes of host graphs defined via degree (sequence) conditions and quasirandomness properties.

\smallskip

\textbf{Guest structure.} Our results embed spanning guest (hyper)graphs that are in a general sense cyclical.
Basically, we can embed anything that can be expressed by `chaining up' isomorphic copies of a fixed graph.
This includes perfect tilings, powers of cycles, hypergraph $\ell$-cycles and powers of hypergraph $\ell$-cycles.

\smallskip

\textbf{Connectivity.} A central challenge of Dirac-type problems for hypergraphs concerns connectivity.
For instance, finding tight Hamilton cycles beyond codegree conditions becomes much harder, since the host graph may no longer be connected~\cite{RRR19}.
To address this, we adopt a recently introduced setup for Hamiltonicity in dense graphs~\cite{LS24a}.
This allows us to find spanning structures in host graphs that are not entirely connected.
In particular, we solve the robustness problem for $3$-uniform tight Hamilton cycles under vertex degree conditions, which serves as a benchmark for overcoming connectivity issues.

\smallskip

\textbf{Proof techniques.} The proofs are based on the recent breakthrough on Talagrand's conjecture~\cite{FKN+21}, which reduces the problem to specifying a `spread' distribution on the desired guest structure in the (deterministic) host structure.
We achieve this via a new argument that reduces the problem to the case of perfect matchings in a higher uniformity.
Instead, we follow a recently introduced approach to study perfect tilings~\cite{Lan23} and Hamiltonicity~\cite{LS24a} in dense hypergraphs, which focuses on local substructures.

\subsection*{Overview}
The paper is organised as follows.
In the next section, we present a series of applications of our framework.
In \cref{sec:framework}, we motivate and state our main result.
The proof of the applications can be found in \cref{sec:applications-proofs}.
The proof of our main result spans \cref{sec:proof-main-result,sec:typical-to-correct-distribution-proof,sec:correct-to-spread-proof,section:strongspread-to-threshold}.
We conclude with a discussion and a few open problems in \cref{sec:conclusion}.

\section{Applications}
\label{sec:applications}

We present a series of applications that illustrate the use of our main result (\cref{thm:main});
in particular, we essentially obtain all known results where the guest graph has a cyclic structure and the host graph robustly contains this guest structure.
To illustrate the strength of our theorem we also mention results that recently have been obtained by Kelly, Müyesser and Pokrovskiy~\cite{KMP23} (\cref{thm:perfect-tilings-minimum-degree}, \cref{thm:power-hamiton-cycles-minimum-degree} and \cref{thm:tight-hamilton-cycles-codegree}).
We remark that their work on (powers of) Hamilton cycles only applies to situations where the host (hyper)graph consists of a single connectivity component.
This is however only given in certain specific situations.
In contrast, our work takes into account connectivity and thus leads to many further new results (\cref{thm:power-hamiton-cycles-degree-sequence,thm:perfect-tilings-uniformly-dense,thm:tight-hamilton-cycles-k-2-degree,thm:tight-hamilton-cycles-k-3-degree,thm:tight-hamilton-cycles-d-degree-framework}), including the answer to a question formulated by Kelly, Müyesser and Pokrovskiy (\cref{thm:tight-hamilton-cycles-k-2-degree}) that can be easily deduced from \cref{thm:main}.

In many scenarios it is also in question whether the optimal probability $p$ (as a function of $n$) contains polylogarithmic factors
and often there are different approaches necessary depending on the answer.
Another novelty of our result is that it covers both cases and which case actually applies can be easily read off from structural parameters of the guest graph.

\subsection*{Notation}

We write $[n] = \{1,\dots,n\}$.
A $k$-uniform hypergraph $G$ (\emph{$k$-graph} for short) consists of a set of vertices $V(G)$ and a set of edges $E(G)$, where each edge is a set of $k$ vertices (\emph{$k$-set} for short). Throughout, we assume that $k\in \bN\setminus \{1\}$.
We write $v(G)=|V(G)|$ for the \emph{order} of $G$ and $e(G) = |E(G)|$ for its \emph{size}.
For two hypergraphs $G,H$, we write $G\subseteq H$ if $V(G)\subseteq V(H)$ and $E(G)\subseteq E(H)$.
For $1 \leq d \leq k-1$, the {minimum $d$-degree}, written $\delta_d(G)$, is the maximum ${m} \in \NATS$ such that every set of $d$ vertices is contained in at least ${m}$ edges of~$G$.
For $d=k-1$, this is also called the \emph{minimum codegree}.

\newcommand{\ER}[3]{\oG^{(#3)}_{#1,#2}}
\newcommand{\Gnp}[1]{\ER{n}{p}{#1}}

\newcommand{\RS}[3]{{#1}^{#3}_{#2}}
\newcommand{\Gs}{G_p}

We denote by $\Gnp{k}$ the \emph{binomial random $k$-graph} on vertex set $[n]$, that is, each potential edge is independently present with probability $p$.
For a $k$-graph $G$ and $p\in [0,1]$, let $\Gs$ be the \emph{randomly sparsified graph} which arises from $G$ by retaining each edge independently with probability $p$.
Thus, $G_p = \Gnp{k}$ when $G$ is the complete $k$-graph on vertex set $[n]$.
We say that $G' \sim \Gnp{k}$ satisfies a property $\cP$  \emph{with high probability} if $G'$ satisfies $\cP$ with probability tending to $1$ as $n$ goes to infinity.
The same formulation is used for random sparsifications $G' \sim G_p$, where we implicitly assume that~$G'$ can be specified for arbitrarily large values of $n$.

\subsection*{Perfect tilings}
\label{sec:applications-tilings}

Our first set of outcomes concerns tilings, which present a generalisation of matchings.
Given $k$-graphs $F$ and $G$, a \emph{perfect $F$-tiling} of $G$ is a set of copies of $F$ in $G$ (called \emph{tiles}) such that every vertex of $G$ is on exactly one copy. (Whenever we consider perfect $F$-tilings in a graph $G$, we tacitly assume that $v(F)$ divides $v(G)$.)
We are interested in (asymptotic) Dirac-type results for perfect tilings.
It is convenient to express this in terms of `thresholds'.
For an $m$-vertex $k$-graph $F$, we define the \emph{minimum $d$-degree threshold for perfect $F$-tilings}, denoted by $\th_d^{}(\til_F)$, as the infimum of $\delta \in [0,1]$ such that for all $\mu>0$ and $n$ large enough,
every $n$-vertex $k$-graph $G$ with $\delta_d(G) \geq (\delta +\mu) \binom{n-d}{k-d}$ admits a perfect $F$-tiling.

The thresholds for $F$-tilings in $2$-graphs were determined by Kühn and Osthus~\cite{KO09} after preliminary work of many others.
(A more detailed trajectory of the problem can be found in the survey of Simonovits and Szemerédi~\cite{SS19}.)
For hypergraphs, fewer results are known (see~\cite{Lan23} for a recent survey).
Our first application lifts these results to the random robust setting, even in those cases where the thresholds have not yet been determined.

For random graphs, Johansson, Kahn and Vu~\cite{JKV08} 
investigated $F$-tilings in $\ER{n}{p}{k}$.
Define $d_1(F)= e(F)/(v(F)-1)$ as the \emph{$1$-density} of $F$, and say that a graph is \emph{strictly $1$-balanced} if $d_1(F')<d_1(F)$ for all proper subgraphs $F'$ of $F$.
Most hypergraphs of interest are strictly $1$-balanced, such as cycles and complete graphs, and it is natural to restrict first the attention to these graphs as here the number of copies that contain a fixed vertex is well-behaved for all~$p$.
Johansson, Kahn and Vu determined when $\ER{n}{p}{k}$ contains a perfect $F$-tiling for strictly $1$-balanced $k$-graphs $F$.

In the setting of robustness problems, it was recently shown~\cite{ABC+22,KKK+22,PSS+22} that for graphs $G$ on~$n$ vertices with $\delta_1(G) \geq (1-1/r)n$,
the random sparsification $\Gs$ typically contains a perfect $K_r$-tiling, where $p$ can be as low as we can guarantee that each vertex is contained in a copy of $K_r$.
We extend these results for asymptotically optimal degree conditions by replacing $K_r$ with a strictly $1$-balanced $F$ and also from graphs to hypergraphs.
This result was also obtained independently by Kelly, Müyesser and Pokrovskiy~\cite{KMP23}.

\begin{theorem}[Perfect tilings -- minimum degree]\label{thm:perfect-tilings-minimum-degree}
	For all $1 \leq d \leq k-1$, $\mu>0$ and $1$-balanced $k$-graphs $F$, there is $C>0$ with the following property.
	Let $G$ be an $n$-vertex $k$-graph with $\delta_d(G)\geq (\th_d^{}(\til_F)+\mu)\binom{n-d}{k-d}$.
	Suppose $p \geq C(\log n)^{1/e(F)}n^{-1/d_1(F)}$ and $n$ is divisible by $v(F)$.
	Then $\Gs$ contains a perfect $F$-tiling with high probability.
\end{theorem}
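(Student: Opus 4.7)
The plan is to deduce this theorem directly from the paper's main result, \cref{thm:main}, by verifying its two main hypotheses in this setting: that the host class defined by the minimum $d$-degree condition is approximately hereditary under random induced subgraphs of constant order, and that perfect $F$-tilings fit into the class of cyclic/chainable guest structures covered by the framework. The second point is immediate from the introduction, which explicitly lists perfect tilings as an instance of concatenating disjoint copies of $F$. For the first point, I would fix a large constant $m$ divisible by $v(F)$ and take $U \subseteq V(G)$ uniformly at random of size $m$; a standard hypergeometric concentration argument applied to the co-neighbourhood of each fixed $d$-set shows that with probability $1-o_m(1)$ every $d$-subset of $U$ has $G[U]$-codegree at least $(\th_d^{}(\til_F) + \mu/2)\binom{m-d}{k-d}$. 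By the definition of $\th_d^{}(\til_F)$, the induced subgraph $G[U]$ then admits a perfect $F$-tiling, which supplies the local tileability input that \cref{thm:main} requires.

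Granted these verifications, \cref{thm:main} should produce a probability distribution on the perfect $F$-tilings of $G$ whose spread parameter is $q \sim n^{-1/d_1(F)} = n^{-(v(F)-1)/e(F)}$, reflecting the typical number of copies of $F$ through any fixed vertex. Plugging this distribution into the Park--Pham / Frankston--Kahn--Narayanan--Park resolution of the fractional Kahn--Kalai conjecture then yields that for $p$ above $q$ times a suitable logarithmic factor, $G_p$ contains a perfect $F$-tiling with high probability, which is the mechanism the paper's introduction advertises.

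The delicate point, and the main obstacle, is obtaining the sharper polylogarithmic factor $(\log n)^{1/e(F)}$ rather than the naive $\log n$ that a single-parameter spread bound would yield. This is where strict $1$-balancedness is used: since $d_1(F') < d_1(F)$ for every proper subgraph $F' \subsetneq F$, the expected number of partial $F$-embeddings extending any fixed set of $s$ edges drops off strictly faster than a uniform spread bound alone dictates. This permits upgrading to a fractional, subgraph-wise spread on the distribution of $F$-tilings, which is exactly the hypothesis under which the $(\log n)^{1/e(F)}$-savings is achievable via the Riordan-type refinement. In the framework, this upgrade should be packaged inside the proof of \cref{thm:main}; so once the hereditary degree check is done and the strict $1$-balancedness of $F$ is invoked, the threshold $p \ge C(\log n)^{1/e(F)} n^{-1/d_1(F)}$ follows, and the divisibility $v(F) \mid n$ ensures the existence of the targeted perfect tiling at both the local and the global scale.
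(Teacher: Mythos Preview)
Your hereditary check via hypergeometric concentration is correct and is exactly \cref{lem:grabbing-minimum-degree}; likewise, perfect $F$-tilings do fit the chain framework. The gap is in how you obtain the factor $(\log n)^{1/e(F)}$.

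\cref{thm:main} as stated only offers two outcomes: case~\ref{item:main-general} gives $p \geq C n^{-1/d}\log n$, and case~\ref{item:main-balanced} gives $p = \omega(n^{-1/d})$ with no logarithm. If you model the $F$-tiling as a closed $L$-chain with $L$ a $(k,0,v(F))$-link isomorphic to $F$, then for $d = d_1(F) = e(F)/(v(F)-1)$ a single tile $F$ witnesses $e(F) = d\,v(F) - d$, so the best $\lambda$ you can take is $\lambda = d$. This lands you in case~\ref{item:main-general}, and you get only a full $\log n$ factor. There is no ``fractional, subgraph-wise spread'' mechanism inside \cref{thm:main} that exploits strict $1$-balancedness to shave this down to $(\log n)^{1/e(F)}$; the balancedness notion used there is the $(d,\lambda)$ condition, which is blind to this refinement.

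The paper obtains the correct exponent by a different route that you have not described: it passes to the auxiliary $v(F)$-graph $H$ whose edges are the vertex sets of $F$-copies in $G$, applies the framework (\cref{thm:main-simple}) to $H$ to get a perfect matching in $H_q$ for $q \geq C_1 n^{-v(F)+1}\log n$, and only then invokes Riordan's coupling (\cref{thm:rio}) externally to transfer a perfect matching in $H_q$ to a perfect $F$-tiling in $G_p$ with $q \sim p^{e(F)}$. It is this last coupling step, applied after and outside the main framework, that converts $\log n$ into $(\log n)^{1/e(F)}$. Your proposal conflates this external coupling with an internal feature of \cref{thm:main}, and without that separation the argument does not deliver the claimed threshold.
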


The bound on $p$ is best-possible up to the leading constant.
This is because it coincides with the threshold where we can ensure that each vertex belongs to a copy of $F$, which is clearly a necessary condition to contain a perfect $F$-tiling.
Hence, $k$-graphs above the corresponding minimum degree threshold contain a perfect $F$-tiling in a similarly plentiful way as a perfect clique-tiling.

\subsection*{Beyond degree conditions}

Our results are not restricted to host graphs from the setting of Dirac's theorem.
In fact, we shall prove a generalisation of \cref{thm:perfect-tilings-minimum-degree} that covers all hypergraph families which are approximately closed under taking typical induced subgraphs of constant order.
This includes for instance the setting of degree sequences and quasirandom graphs.
We illustrate this at the example of perfect tilings in uniformly dense hypergraphs.

\newcommand{\DenF}[2]{\mathsf{\den}_{#1,\, #2}}

For $\eps, d >0$, we say that an $n$-vertex $k$-graph $G$ is \emph{uniformly $(\eps,d)$-dense} if for all (not necessarily disjoint) $X_1,\dots, X_k \subset V(G)$, we have
\begin{align*}
	e_G(X_1,\dots,X_k) \geq d |X_1|\cdots |X_k| - \eps n
\end{align*}
where $e_G(X_1,\dots,X_k)$ is the number of tuples $\{v_1,\dots,v_k\} \in E(H)$ with $v_i \in X_i$ for each $ 1\leq i \leq k$.
Let $\DenF{\eps}{d}$ be the set of all $(\eps,d)$-uniformly dense $k$-graphs.

Lenz and Mubayi~\cite{LM16} asked for which tiles $F$ every large enough quasirandom hypergraph is guaranteed to contain a perfect $F$-tiling.
Since uniform density does not prevent isolated vertices, one has to add a mild degree condition for any chances of success.
Formally, let $\cF({\til})$ contain,
for every $k\geq 2$, the $k$-graphs $F$ such that for every $d,\mu >0$, there is $\eps >0$  such that for every large enough $n$ divisible by $v(F)$,
every $n$-vertex $k$-graph $G \in {\DenF{\eps}{d}^{}}$ with $\delta_1{(G)} \geq \mu
	\binom{n-1}{k-1}$ has a perfect $F$-tiling.

Lenz and Mubayi~\cite{LM16} initiated the study of the class $\cF({\til})$ and showed that it contains all linear hypergraphs.
More recently, Ding, Han, Sun, Wang and Zhou~\cite{DHS+22} provided a characterisation of all $k$-partite $k$-graphs and all $3$-graphs in $\cF({\til})$.

We obtain the following robustness result in this setting.

\begin{theorem}[Perfect tilings -- uniform density]\label{thm:perfect-tilings-uniformly-dense}
	For all $k\geq 2$, $d,\mu > 0$ and $1$-balanced $k$-graphs~$F$ in $\cF({\til})$, there are $C,\, \eps>0$ with the following property.
	Let $G \in {\DenF{\eps}{d}^{}}$ be an $n$-vertex $k$-graph with $\delta_1(G) \geq \mu
		n^{k-1}$.
	Suppose $p \geq C(\log n)^{1/e(F)}n^{-1/d_1(F)}$ and $n$ is divisible by $v(F)$.
	Then $\Gs$ contains a perfect $F$-tiling with high probability.
\end{theorem}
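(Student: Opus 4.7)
The plan is to apply the main framework theorem (\cref{thm:main}) with host class consisting of all $k$-graphs $G$ satisfying $G \in \DenF{\eps_0}{d}$ and $\delta_1(G) \geq \mu_0 n^{k-1}$ for suitable $\eps_0 \leq \eps$ and $\mu_0 \leq \mu$, and with guest structure being a perfect $F$-tiling. Two hypotheses must be verified: hereditary closure of this class under sampling a random induced subgraph of constant order, and the existence of a perfect $F$-tiling in a typical such subgraph. Once these are in place, \cref{thm:main} delivers a sufficiently spread distribution over perfect $F$-tilings of $G$, and the recent resolution of Talagrand's conjecture yields the desired tiling in $G_p$ with high probability.

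For hereditary closure, fix a large constant $m$ divisible by $v(F)$ and let $S \subseteq V(G)$ be a uniformly random $m$-subset. For the degree condition, fix $v \in V(G)$; the link $L_G(v)$ has at least $\mu_0 n^{k-1}$ edges, and the count of these edges contained in $\binom{S \setminus \{v\}}{k-1}$ is a hypergeometric-style variable of expectation of order $\mu_0 m^{k-1}$. A Chernoff bound plus a union bound over $v \in S$ gives $\delta_1(G[S]) \geq (\mu_0/2) m^{k-1}$ with high probability. For uniform density, fix $X_1,\dots,X_k \subseteq V(G)$ and let $Y_i = X_i \cap S$: both $|Y_i|$ and $e_G(Y_1,\dots,Y_k)$ concentrate around their expectations, and after a union bound over the at most $2^{km}$ tuples $(Y_1,\dots,Y_k)$ inside $S$, one concludes $G[S] \in \DenF{\eps_1}{d}$ with $\eps_1$ made small by choosing $\eps_0$ small in terms of $m$.

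Having shown that $G[S]$ lands in a slightly weakened copy of the host class with high probability, one invokes the defining property of $\cF(\til)$ (applicable because $F \in \cF(\til)$) to obtain a perfect $F$-tiling of $G[S]$. This is precisely the local substructure \cref{thm:main} requires. The assumption that $F$ is $1$-balanced ensures that the spread parameters produced by the framework match the $1$-density threshold, so that the final sparsity bound takes the stated form $p \geq C (\log n)^{1/e(F)} n^{-1/d_1(F)}$.

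The principal obstacle is really just bookkeeping the parameter chain: $m$ must be taken large enough for the $\cF(\til)$ hypothesis to apply to $G[S]$, then $\eps_0$ must be small relative to $m$, $\mu$ and $d$ so that the sampled subgraph $G[S]$ still lies within this regime with high probability. All the genuine difficulty — lifting a local perfect tiling to a spread distribution via reduction to perfect matchings in a higher uniformity, and then invoking the Kahn--Park-type threshold theorem — is absorbed into \cref{thm:main} itself.
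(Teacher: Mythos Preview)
Your inheritance verification is essentially what the paper does (via \cref{lem:grabbing-minimum-degree,lem:grabbing-uniformly-dense}), and using the definition of $\cF(\til)$ to get a perfect $F$-tiling in a typical constant-sized induced subgraph is exactly right. The issue is the last step.

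Applying \cref{thm:main} directly with $L$ the $(k,0,v(F))$-link consisting of $F$ itself, the relevant balancedness parameters are $d=\lambda=d_1(F)$: if $I$ is a subgraph of a disjoint union of copies of $F$ with $v$ vertices, $e>0$ edges and touching $c$ copies, $1$-balancedness gives $e\le d_1(F)(v-c)\le d_1(F)v-d_1(F)$. You are therefore in case~\ref{item:main-general} of \cref{thm:main}, which only delivers $p\ge C n^{-1/d_1(F)}\log n$, \emph{not} $p\ge C(\log n)^{1/e(F)}n^{-1/d_1(F)}$. Your sentence ``the spread parameters produced by the framework match the $1$-density threshold, so that the final sparsity bound takes the stated form'' is where the argument breaks: the framework produces a spread distribution on $F$-tilings and then invokes the Frankston--Kahn--Narayanan--Park machinery, which in this regime costs a full factor of $\log n$.

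The paper avoids this loss by a two-level reduction that you omit. It passes to the auxiliary $m$-graph $H$ (with $m=v(F)$) whose edges are vertex sets of $F$-copies in $G$, applies \cref{thm:main-simple} to $H$ to get a perfect matching in $H_q$ for $q=C_1 n^{-m+1}\log n$, and then invokes Riordan's coupling (\cref{thm:rio}) between $G_p$ and $G_q(F)$ with $q\approx C_2 p^{e(F)}$. Solving $C_2 p^{e(F)}=C_1 n^{-m+1}\log n$ gives exactly $p=\Theta\big((\log n)^{1/e(F)}n^{-1/d_1(F)}\big)$. Without Riordan's theorem (which is where the strict $1$-balancedness of $F$ is actually used), the fractional power of the logarithm cannot be recovered from the spread-distribution machinery alone.
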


\subsection*{Powers of cycles}
\label{sec:applications-power}
A graph $C$ is \emph{the $\ell$th power of a cycle} if there is a cyclical ordering of the vertices of $C$ such that every set of $\ell+1$ consecutive vertices forms a clique.
Moreover, $C \subset G$ is \emph{Hamilton} in a host graph $G$ if it spans all its vertices.
We define the \emph{minimum degree threshold for the $\ell$th power of a Hamilton cycle}, denoted (with hindsight) by $\th_1^{}(\powham^\ell)$,
as the infimum over all $\delta \in [0,1]$ such that for all $\mu>0$ and $n$ large enough,
every $n$-vertex $2$-graph $G$ with $\delta_1(G) \geq (\delta +\mu) n$ contains the $\ell$th power of a Hamilton cycle.
It was shown by Koml\'{o}s, S\'{a}rk\"{o}zy and Szemer{\'e}di~\cite{KSS98} that $\th_1^{}(\powham^\ell) = 1-1/(\ell+1)$ for all $\ell\geq 1$, which generalises Dirac's~theorem.

As mentioned above, $\Gnp{2}$ contains a Hamilton cycle with high probability provided that $p$ is somewhat greater than $\log n/n$.
For the $\ell$th power of a Hamilton cycle, similar results were derived by Kühn and Osthus~\cite{KO12} for $\ell\geq 3$ from a result of Riordan~\cite{Rio00}.
The remaining case $\ell=2$ was recently resolved by Kahn, Narayanan and Park~\cite{KNP21}. 
In combination, these results show that $\Gnp{2}$ contains the $\ell$th power of a Hamilton cycle for all $\ell\geq2$ if $p=\Omega(n^{-1/\ell})$.
For more details, we refer the reader to the recent survey of Frieze~\cite{Fri19}.

Here we give a common generalisation of these results.
The following result was also obtained independently by Kelly, Müyesser and Pokrovskiy~\cite{KMP23}.

\begin{theorem}[Powers of Hamilton cycles -- minimum degree]\label{thm:power-hamiton-cycles-minimum-degree}
	Let $k,\ell \geq 2$ and $\mu>0$.
	Suppose that $p = \omega(n^{-1/\ell})$.
	Let $G$ be an $n$-vertex $2$-graph with $\delta_1(G) \geq (\th_1^{}(\powham^\ell)+\mu)n$.
	Then $\Gs$ contains the $\ell$th power of a Hamilton cycle with high probability.
\end{theorem}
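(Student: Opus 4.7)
The approach is to apply the framework result (\cref{thm:main}) with the host class being all $n$-vertex $2$-graphs of minimum degree at least $(\th_1^{}(\powham^\ell)+\mu)n$, and the guest structure being the $\ell$th power of a Hamilton cycle, viewed as a cyclic chaining of $(\ell+1)$-cliques overlapping on $\ell$ consecutive vertices. Each additional link of this chain contributes one fresh vertex and $\ell$ new edges, so the relevant marginal density is $\ell$; this is exactly what produces the sparsity threshold $p = \omega(n^{-1/\ell})$ promised by the framework.

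Two properties must be checked to invoke the framework. First, \emph{heredity}: if $G$ lies in the host class and $U \subseteq V(G)$ is a uniformly random subset of a sufficiently large constant size $q = q(\mu)$, then with high probability $\delta_1(G[U]) \geq (\th_1^{}(\powham^\ell)+\mu/2)|U|$. This reduces to Chernoff-type concentration on the number of neighbours each vertex retains inside $U$, and $q$ large in terms of $1/\mu$ suffices. Second, \emph{local robust Hamilton connectedness for $\ell$th powers}: in a typical induced $G[U]$ of constant order $q$, for every pair of disjoint ordered $\ell$-cliques $A,B \subseteq U$ (the admissible ``ends''), $G[U]$ contains an $\ell$th power of a Hamilton path on $U$ whose initial $\ell$ vertices are $A$ and whose terminal $\ell$ vertices are $B$, in the prescribed orders. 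The plan is to derive this from the Koml\'os--S\'ark\"ozy--Szemer\'edi theorem by a short connection-plus-absorption step: attach constant-length $\ell$th power of path segments to $A$ and $B$ so as to create two new $\ell$-clique ends, then apply KSS on the residual induced subgraph (whose minimum degree still exceeds $(1-1/(\ell+1))|U|$ by a margin of roughly $\mu/3$) to produce a spanning $\ell$th power of a Hamilton path with these new ends, and finally splice everything together.

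The main obstacle is this connection step inside a constant-sized subgraph: the ``ends'' of an $\ell$th power of a path are ordered $\ell$-cliques and the attached segment must respect the windowed clique structure at the join, and crucially the statement must hold for \emph{every} admissible pair $(A,B)$, not merely for most of them. Standard connection lemmas for powers of paths in the dense regime handle this, provided $q$ is taken large enough in terms of $\mu$ and $\ell$ to leave room for the connecting segments. Once both conditions are in place, \cref{thm:main} yields a sufficiently spread distribution on $\ell$th powers of Hamilton cycles in $G$, and the Frankston--Kahn--Narayanan--Park resolution of the fractional expectation-threshold conjecture then concludes that $G_p$ contains an $\ell$th power of a Hamilton cycle with high probability whenever $p = \omega(n^{-1/\ell})$.
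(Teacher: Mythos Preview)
Your proposal is correct and follows essentially the same route as the paper: encode the $\ell$th power of a cycle as a closed $L$-chain with $L$ the $(2,\ell,1)$-link, use minimum-degree inheritance (the paper's \cref{lem:grabbing-minimum-degree}) for heredity, and use that the power-Hamilton-connectedness threshold coincides with the power-Hamilton-cycle threshold (the paper invokes this as the black box \cref{thm:hamilton-connctedness-power}, whereas you sketch a direct connection-plus-KSS argument, which is fine).

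One point you should make explicit: to land on $p=\omega(n^{-1/\ell})$ rather than $p=\Omega(n^{-1/\ell}\log n)$ you need more than ``marginal density $\ell$'' and FKNP. The paper verifies that closed $L$-chains are $(d,\lambda)$-balanced with $d=\ell$ and $\lambda=2\ell-1$, so that $\lambda>d$ (here $\ell\geq 2$), and this is precisely what triggers part~\ref{item:main-balanced} of \cref{thm:main}; internally this uses a strongly-spread distribution together with the Espuny D\'iaz--Person refinement, not raw FKNP. Your final paragraph conflates these, so as written your invocation of FKNP would only yield the bound with an extra $\log n$.
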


Pósa~\cite{Pos63} extended Dirac's theorem by showing that a graph on $n\geq3$ vertices contains a Hamilton cycle provided that its degree sequence $d_1 \leq \dots \leq d_n$ satisfies $d_i > i + 1$ for all $i \leq n/2$.
This was recently generalised to powers of Hamilton cycles~\cite{LS23,ST17}.
Our next theorem gives a robustness version of this result.

\begin{theorem}[Powers of Hamilton cycles -- degree sequence]\label{thm:power-hamiton-cycles-degree-sequence}
	Let $\ell \geq 2$, $\mu>0$,
	and suppose that $p = \omega(n^{-1/\ell})$.
	Let $G$ be a graph with degree sequence $d_1 \leq \dots \leq d_n$.
	Suppose that $d_i > (\ell-1)n/(\ell+1) + i + \mu n$ for every $i \leq n/(\ell+1)$.
	Then $\Gs$ contains the $\ell$th power of a Hamilton cycle with high probability.
\end{theorem}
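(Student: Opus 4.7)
My plan is to deduce this from the main result (\cref{thm:main}), analogously to how \cref{thm:power-hamiton-cycles-minimum-degree} is handled. Take the host-graph class $\mathcal{G}$ to consist of graphs satisfying the Pósa-type condition $d_i > (\ell-1)n/(\ell+1) + i + \mu n$ for all $i \leq n/(\ell+1)$. Two properties of $\mathcal{G}$ must be verified: first, that $\mathcal{G}$ is (approximately) closed under taking random induced subgraphs of constant order; second, that every $G \in \mathcal{G}$ is locally robustly Hamilton-connected for the $\ell$th power of a path — i.e., in a typical constant-order induced subgraph there is an $\ell$th power of a Hamilton path between any suitable pair of ordered $\ell$-tuples of endpoints.

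\textbf{Checking the hereditary property.} It is convenient to rephrase the Pósa-type condition structurally: letting $U$ be the set of $\lfloor n/(\ell+1) \rfloor$ vertices of smallest degree, the inequalities imply that every $v \in V(G) \setminus U$ has $d_G(v) \ge \ell n/(\ell+1) + \mu n/2$, while every $v \in U$ still has $d_G(v) \ge (\ell-1)n/(\ell+1) + \mu n$. Sampling a uniformly random $m$-subset $R \subseteq V(G)$ with $m$ a large constant, standard concentration (Chernoff / hypergeometric tails) shows that each $v \in R$ has $d_{G[R]}(v)$ within additive $O(\sqrt{m \log m})$ of $(d_G(v)/n)(m-1)$, and $|U \cap R| \le m/(\ell+1) + o(m)$. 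After rescaling, $G[R]$ satisfies the same condition with slack $\mu/4$, so $G[R] \in \mathcal{G}$ with probability bounded away from zero as $m$ grows; this is the approximate heredity required by the framework.

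\textbf{Local Hamilton-connectivity and the main obstacle.} Within any fixed $G[R] \in \mathcal{G}$, I would invoke the deterministic existence of $\ell$th powers of Hamilton cycles under Pósa-type conditions established by Staden and Treglown~\cite{ST17} and Lang and Sanhueza-Matamala~\cite{LS23}. The main obstacle is upgrading this existence statement to a Hamilton-connected variant: between any pair of suitable ordered $\ell$-tuples of endpoints one must find an $\ell$th power of a Hamilton path. I would follow the usual absorption recipe: build an absorbing $\ell$-power substructure supported on the high-degree vertices that can soak up any small leftover; reserve a connecting reservoir; connect the prescribed starting $\ell$-tuple into the absorbing structure; cover the bulk via an almost-spanning $\ell$-power path (e.g.\ from a Hajnal--Szemerédi-type clique factor); and close at the prescribed ending $\ell$-tuple. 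Care is needed near $U$: the notion of "suitable" end-tuple will have to be chosen to lie within the high-degree set, while the low-degree vertices, whose degree still exceeds $(\ell-1)n/(\ell+1)$, are flexible enough to be absorbed into the interior of the power path rather than placed at the ends. Once both conditions are verified, \cref{thm:main} produces the required $\ell$th power of a Hamilton cycle in $G_p$ with high probability, provided $p = \omega(n^{-1/\ell})$, exactly as in \cref{thm:power-hamiton-cycles-minimum-degree}.
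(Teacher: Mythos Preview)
Your overall approach matches the paper's exactly: the paper packages heredity as \cref{lem:grabbing-degree-sequences} (stated, proof omitted as ``a straightforward application of Chernoff's bound'') and the $\ell$-power Hamilton-connectedness as the black-box \cref{thm:power-hamilt-connctedness-deg-seq} (proof deferred to~\cite{LS23a}), and then says the argument ``follows along the same lines as the proof of \cref{thm:power-hamiton-cycles-minimum-degree}''.

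One technical caution on your heredity sketch: reducing the P\'osa-type hypothesis to just two degree thresholds (membership in $U$ versus not) does \emph{not} recover the full condition $d_i > (\ell-1)m/(\ell+1) + i + \mu' m$ for every $i \le m/(\ell+1)$ in $G[R]$. For instance at $i \approx m/(2(\ell+1))$ your two-threshold information only gives $d_i \gtrsim (\ell-1)m/(\ell+1) + \mu m$, which is too weak by roughly $m/(2(\ell+1))$. What you actually need is: for each $\tau$ in a fine net of thresholds, the number of vertices of $R$ whose induced degree lies below the rescaled $\tau$ is (with high probability) at most the corresponding rescaled count. This is still just Chernoff for the induced degrees plus Chernoff for the level sets $\{v : d_G(v) \le \tau\}$, together with a union bound over the net---exactly what \cref{lem:grabbing-degree-sequences} is asserting.
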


\subsection*{Tight cycles}
\label{sec:applications-tight}

We can study the $\ell$th power of a Hamilton cycle by considering an $(\ell+1)$-graph in which each edge plays the role of an $(\ell+1)$-clique.
This leads to the following notion of cycles.
A \emph{tight Hamilton cycle} in a $k$-graph $G$ is an ordering of the vertices of $G$ such that every $k$ consecutive vertices form an edge.
We define the \emph{minimum degree threshold for $k$-uniform tight Hamilton cycles}, denoted by $\th_d^{}(\ham_{k})$, as the infimum over all $\delta \in [0,1]$ such that for all $\mu>0$ and large enough $n$, every $n$-vertex $k$-graph $G$ with $\delta_d(G) \geq (\delta +\mu) \binom{n-d}{k-d}$ contains a tight Hamilton cycle.
It was shown by Rödl, Rucinski and Szemerédi~\cite{RRS09a} that $\th_{k-1}^{}(\ham_{k})=1/2$ in (another) extension of Dirac's theorem.
Reiher, Rödl, Ruciński, Schacht and Szemerédi~\cite{RRR19} proved that $\th_1(\ham_3)=5/9$ after preliminary work of many others, which resolves the case of $d=k-2$ when $k=3$.
Subsequently, this was generalised to $k=4$~\cite{PRR+20} and finally, Polcyn, Reiher, Ruciński and Schülke~\cite{PRRS21} and, independently, Lang and Sanhueza-Matamala~\cite{LS22} established $\th_{k-2}(\ham_k)=5/9$ for all $k \geq 3$.
Recently, it was proved by Lang, Schacht and Volec~\cite{LSV24} that $\th_{k-3}(\ham_k)=5/8$ for all $k \geq 4$.

The threshold for tight Hamilton cycles in binomial random $k$-graphs was determined by Dudek and Frieze~\cite{DF13}, who showed that if $p$ is slightly above $e/n$ then $\Gnp{k}$ contains a tight Hamilton cycle.

We prove the following approximately tight result, which was obtained independently by Kelly, Müyesser and Pokrovskiy~\cite{KMP23}.

\begin{theorem}[Tight Hamiltonicity I]\label{thm:tight-hamilton-cycles-codegree}
	Let $k\geq 2$ and $\mu>0$, and suppose that $p = \omega(1/n)$.
	Let $G$ be an $n$-vertex $k$-graph with $\delta_{k-1}(G) \geq (1/2+\mu)n$.
	Then $\Gs$ contains a tight Hamilton cycle with high probability.
\end{theorem}

Our techniques also allow us to work in host hypergraphs beyond codegree conditions, which was a barrier in previous works.
In particular, we obtain the following result about $k$-graphs with large $(k-2)$-degree, which was posed as an open problem by Kelly, Müyesser and Pokrovskiy \cite[Section 8.2]{KMP23}.

\begin{theorem}[Tight Hamiltonicity II]\label{thm:tight-hamilton-cycles-k-2-degree}
	Let $k\geq 3$ and $\mu>0$ and suppose that $p = \omega(1/n)$.
	Let $G$ be an $n$-vertex $k$-graph with $\delta_{k-2}(G) \geq (5/9+\mu)\binom{n-2}{k-2}$.
	Then $\Gs$ contains a tight Hamilton cycle with high probability.
\end{theorem}

Our techniques in fact allow us to extend even the most recent advances and obtain a random robust version for $k$-graphs under $(k-3)$-degree conditions.

\begin{theorem}[Tight Hamiltonicity III]\label{thm:tight-hamilton-cycles-k-3-degree}
	Let $k\geq 4$ and $\mu>0$ and suppose that $p = \omega(1/n)$.
	Let~$G$ be an $n$-vertex $k$-graph with $\delta_{k-3}(G) \geq (5/8+\mu)\binom{n-3}{k-3}$.
	Then $\Gs$ contains a tight Hamilton cycle with high probability.
\end{theorem}

\subsection*{Hamilton frameworks}
There is a substantial difference in the nature of \cref{thm:tight-hamilton-cycles-codegree}  on the one hand and  \cref{thm:tight-hamilton-cycles-k-2-degree,thm:tight-hamilton-cycles-k-3-degree} on the other hand, because unlike the former the latter require a careful analysis of the connectivity structure of hypergraphs.
{Our main result, allows us to capture this difficulty in a concise way.}
(We return to this discussion in \cref{sec:framework}.)
To formalise this, we introduce the notion of \emph{Hamilton frameworks} following the exposition of  Lang and Sanhueza-Matamala~\cite{LS24a}.

We begin with a discussion of three structural features that constitute Hamiltonicity: {connectivity}, {space} and {aperiodicity}.
Let $G$ be a $k$-graph on $n$ vertices.
 
\begin{connectivity}
	We denote by $L(G)$ the \emph{line graph} of $G$, which is the $2$-graph on vertex set $E(G)$ with an edge $ef$ whenever $|e \cap f|=k-1$.
	A subgraph of $G$ is \emph{tightly connected} if it has no isolated vertices and its edges induce a connected subgraph in $L(G)$.
	Moreover, we refer to edge-maximal tightly connected subgraphs as \emph{tight components}.
\end{connectivity}

\begin{space-prop}
	A \emph{fractional matching} is a function $\omega\colon E(G) \to [0,1]$ such that $\sum_{e\colon v \in e} \omega (e) \leq 1$ for every vertex $v \in V(G)$.
	The \emph{size} of a fractional matching $\omega$ is $\sum_{e \in E(G)} \omega (e)$.
	We say that $\omega$ is \emph{perfect} if its size is $n/k$.
\end{space-prop} 

\begin{aperiodicity}
	A \emph{homomorphism} from a $k$-graph $C$ to $G$ is a function $\phi \colon V(C) \to V(G)$ that maps edges to edges.
	We say that $W \subset G$ is a \emph{closed tight walk} if $W$ is the image of homomorphism of a tight cycle $C$.
	The \emph{order} of $W$ is the order of $C$.
	We call $G$ \emph{aperiodic} if it contains a closed walk whose order is congruent to $1$ modulo $k$.
\end{aperiodicity}

It is easy to see that any tight cycle is connected, contains a perfect fractional matching and (if its order is coprime to $k$) is aperiodic.
In other words, these three properties satisfied by any family~$\cP$ of $k$-graphs whose members are Hamiltonian after deleting up to $k-1$ vertices.
In addition to this, constructions show that such a family $\cP$ must also satisfy a fourth property, which ensures that these features are \emph{consistent} between reasonable similar member of $\cP$.
This motivates the following definition.

\begin{definition}[Hamilton framework]\label{def:hamilton-framework}
	A family $\cP$ of $s$-vertex $k$-graphs has a \emph{Hamilton framework}~$F$ if for every $G \in \cP$, there is an $s$-vertex subgraph $F(G) \subset G$ such that
	\begin{enumerate}[(F1)]
		\item \label{itm:hf-connected} $F(G)$ is a tight component, \hfill(connectivity)
		\item \label{itm:hf-matching} $F(G)$ has a perfect fractional matching, \hfill(space)
		\item \label{itm:hf-odd} $F(G)$ contains a closed walk of order $1 \bmod k$, and \hfill(aperiodicity)
		\item \label{itm:hf-intersecting} $F(G) \cup F(G')$ is tightly connected whenever $G,G' \in \cG$ are obtained by deleting a distinct vertex from the same $(s+1)$-vertex $k$-graph. \hfill(consistency)
	\end{enumerate}
\end{definition}

Thus, a Hamilton framework is a family of graphs which satisfy the four aforementioned necessary properties for Hamiltonicity.
It is not true that every member of a Hamilton framework is Hamiltonian, but, as we will see, a natural hardening of these properties does indeed imply Hamiltonicity.
The following definition connects Hamilton frameworks to degree conditions.
For a $k$-graph $G$ and $t\geq k$, the \emph{clique-graph} $K_t(G)$ has vertex set $V(G)$ and a $t$-uniform edge $X$ whenever~$G[X]$ induces a clique.
Note that a $(t-k+1)$st power of a tight Hamilton cycle in $G$ corresponds to a tight Hamilton cycle in $K_t(G)$.

\begin{definition}\label{def:ham-fw-threshold}
	Let $\delta_d^{\hf}(k,t)$ be the infimum $\delta \in [0,1]$ such for every $\eps$, there is $n_0$ such that the family of $t$-graphs $K_t(G)$ on $n \geq n_0$ vertices with $\delta_d(G) \geq (\delta + \eps ) \binom{n-d}{k-d}$ admits a Hamilton framework.
\end{definition}

It was proved by Lang and Sanhueza-Matamala~\cite{LS24a} that $\delta_{k-1}^{\hf}(k) = 1/2$ $\delta_{k-2}^{\hf}(k) = 5/9$ and $\delta_{k-3}^{\hf}(k) = 5/8$, as well as $\delta_{1}^{\hf}(2,t) = \th_1^{}(\powham^{t-1})$.
Hence \cref{thm:power-hamiton-cycles-minimum-degree,thm:tight-hamilton-cycles-codegree,thm:tight-hamilton-cycles-k-2-degree,thm:tight-hamilton-cycles-k-3-degree} are corollaries of the following much more general result.

\begin{theorem}[Tight Hamiltonicity IV]\label{thm:tight-hamilton-cycles-d-degree-framework}
		Let $1 \leq d < k$, $\mu>0$ and $\delta = \delta_d^{\hf}(k,t)$, and suppose that $p = \omega(1/n)$.
		Let $G$ be an $n$-vertex $k$-graph with $\delta_{d}(G) \geq (\delta+\mu) \binom{n-d}{k-d}$.
		Then $\Gs$ contains a tight Hamilton cycle with high probability.
\end{theorem}

We remark that \cref{thm:tight-hamilton-cycles-d-degree-framework} delivers optimal results for robust Hamiltonicity whenever $\delta_d^{\hf}(k,k) = \th_d^{}(\ham_{k})$, which is conjectured to hold for all $1\leq d < k$~\cite[Conjecture 2.7]{LS24a}.
In fact, we shall derive \cref{thm:tight-hamilton-cycles-d-degree-framework} from an even more general result (\cref{thm:framework-connectedness}), which goes beyond the setting of minimum degree conditions and also captures powers of tight cycles.
The defer the details to \cref{sec:frameworks}.

\subsection*{Loose cycles}
\label{sec:applications-loose}

We can relax the definition of tight Hamiltonicity by allowing $k$-uniform edges to interlock in fewer than $k-1$ vertices.
More precisely, for $0 \leq \ell \leq k-1$ an \emph{$\ell$-cycle}
$C$ in a $k$-graph $G$ has its vertices cyclically ordered such that each edge consists of $k$ consecutive vertices and consecutive edges intersect in exactly $\ell$ vertices (an $1$-cycle is also called a \emph{loose} cycle).
We say that $C$ is \email{Hamilton} if it covers all vertices of $G$.
(Note that the case $\ell=0$ corresponds to a perfect matching.)
Similarly as for tilings, we tacitly assume when considering Hamilton $\ell$-cycles in a $k$-graph on $n$ vertices that $k-\ell$ divides $n$.
We define the \emph{minimum degree threshold for $k$-uniform Hamilton $\ell$-cycles}, denoted by $\th_d^{}(\ham_{k,\ell})$,
as the infimum over all $\delta \in [0,1]$ such that for all $\mu>0$ and large enough $n$,
every $n$-vertex $k$-graph $G$ with $\delta_d(G) \geq (\delta +\mu) \binom{n-d}{k-d}$ contains a Hamilton $\ell$-cycle.
It was shown by Kühn, Mycroft and Osthus~\cite{KMO10} that $\th_{k-1}^{}(\ham_{k,\ell}) = 1/(\lceil k/(k-\ell)\rceil)(k-\ell))$ whenever $k-\ell$ does not divide $k$ and $\th_{k-1}^{}(\ham_{k,\ell})=1/2$ otherwise.
Most known bounds on $\th_d^{}(\ham_{k,\ell})$ for $d \leq k-2$ assume that $\ell \leq k/2$ meaning cycles in which no vertex is contained in three or more edges.
Specifically, the threshold for $\ell = k/2$ and $k/2 \leq d \leq k-2$ is determined in~\cite{BMSSS17,HHZ20} and for some family of values $1 \leq \ell < k/2$ in~\cite{GHS+21}.

The problem of finding Hamilton $\ell$-cycles in $\Gnp{k}$ was resolved in~\cite{DF13,NS20}.
In particular, it is known that for $\ell \geq 2$ it suffices to take $p = \Omega(n^{-k+\ell})$, while for $\ell=1$ (loose cycles) we require  $p= \Omega (n^{-k+1}\log n)$.
We confirm the same for the robustness setting and for loose cycles, we can in fact cover the full degree spectrum.

\begin{theorem}[Hamilton $\ell$-cycles]\label{thm:loose-hamilton-cycles-codegree}
	Let $2 \leq \ell \leq k-1$ and $\mu>0$ and suppose that $p = \omega(n^{-k+\ell})$.
	Let {$n$ be divisible by $k - \ell$, and let} $G$ be an $n$-vertex $k$-graph with $\delta_{k-1}(G) \geq (\th_{k-1}^{}(\ham_{k,\ell})+\mu)n$.
	Then $\Gs$ contains a Hamilton $\ell$-cycle with high probability.
\end{theorem}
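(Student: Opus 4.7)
The plan is to invoke the main framework (\cref{thm:main} in \cref{sec:framework}) with the Hamilton $\ell$-cycle as the guest structure. Observe first that a Hamilton $\ell$-cycle in a $k$-graph is literally ``chained up'' from copies of a single edge: consecutive edges share exactly $\ell$ vertices, so the entire structure is obtained by iteratively gluing a $k$-edge onto an $\ell$-set at the current endpoint. This is precisely the kind of cyclic structure the framework is designed to embed, with local piece of size $k$ and overlap of size $\ell$, which leads to the natural threshold $p = \Theta(n^{-(k-\ell)})$ governed by the expected number of edges extending a prescribed $\ell$-set.

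The core verification is the local Hamilton connectivity of the host class. For any fixed (large) constant $C$, I would take a uniformly random subset $U \subset V(G)$ of size $C$ and consider $G[U]$. Since $\delta_{k-1}(G) \geq (\th_{k-1}(\ham_{k,\ell}) + \mu)n$, a standard Chernoff/McDiarmid concentration argument applied to the hypergeometric distribution shows that, with probability tending to $1$ as $C \to \infty$, every $(k-1)$-set inside $U$ has codegree at least $(\th_{k-1}(\ham_{k,\ell}) + \mu/2)C$ in $G[U]$. By the known minimum codegree results for Hamilton $\ell$-cycles~\cite{KMO10} combined with their Hamilton connectivity extensions (Hamilton paths between any two admissible ``ends'', i.e.\ compatible $\ell$-tuples of consecutive vertices), such a $G[U]$ admits a Hamilton $\ell$-path between every prescribed pair of suitable ends provided $C$ is chosen large enough in terms of $\mu$, $k$, $\ell$. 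This is exactly the local Hamilton connectivity hypothesis of \cref{thm:main}.

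Given this, the framework reduces the problem to constructing a suitably spread distribution on Hamilton $\ell$-cycles of $G$, which by the results of~\cite{FKN+21} applied in the fashion described in the introduction then yields a Hamilton $\ell$-cycle inside $G_p$ with high probability for any $p = \omega(n^{-(k-\ell)})$. The divisibility assumption $(k-\ell) \mid n$ is required so that an $\ell$-cycle can span all $n$ vertices. For $\ell \geq 2$ one stays inside the ``tightly connected'' regime, so the connectivity complications that appear for loose cycles ($\ell = 1$) and motivate the $\log n$ factor there do not arise.

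The main obstacle I expect is matching the abstract local Hamilton connectivity hypothesis of \cref{thm:main} with the concrete codegree statement: one needs Hamilton $\ell$-paths in $G[U]$ not just for some pair of endpoint $\ell$-tuples but for all admissible pairs simultaneously, and with the correct notion of ``suitable ends'' dictated by the framework. This is where the recent work on Hamilton connectivity in hypergraphs (as developed in~\cite{LS23a}) has to be plugged in carefully. Once this connectivity input is in place, the codegree inheritance argument and the choice of $p$ are routine, and the conclusion follows directly from \cref{thm:main}.
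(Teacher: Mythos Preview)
Your plan is essentially the paper's own proof: model the $\ell$-cycle as a closed $L$-chain for the $(k,\ell,k-\ell)$-link $L$ consisting of a single edge, use codegree inheritance (\cref{lem:grabbing-minimum-degree}) together with the equality $\th_{k-1}(\SHC_{k,\ell})=\th_{k-1}(\ham_{k,\ell})$ from \cref{thm:hamilton-connctedness-codegree} to verify the hypothesis on the property graph, and then apply \cref{thm:main}.

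One point needs correcting. You attribute the absence of the $\log n$ factor for $\ell\ge 2$ to staying in a ``tightly connected regime'' as opposed to ``connectivity complications'' at $\ell=1$. That is not the mechanism. What you must check, and what the paper does explicitly, is that closed $L$-chains are $(d,\lambda)$-balanced with $d=1/(k-\ell)$ and $\lambda=\ell/(k-\ell)$: a subgraph that does not wrap around satisfies $v\ge (k-\ell)e+\ell$, i.e.\ $e\le d v-\lambda$. For $\ell\ge 2$ this gives $\lambda>d$, which is exactly the hypothesis of \cref{thm:main}\ref{item:main-balanced} and is what removes the logarithm; for $\ell=1$ one only has $\lambda=d$, forcing \cref{thm:main}\ref{item:main-general} and the $\log n$. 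Make sure this balancedness computation appears in your write-up, as without it you cannot invoke the sharper branch of the main theorem.
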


\begin{theorem}[Loose Hamiltonicity]\label{thm:loose-hamilton-cycles-l=1}
	For every $1\leq d \leq k-1$ and $\mu>0$, there is $C >0$ with the following property.
	Suppose that $p \geq C n^{-k+1} \log n$.
	Let {$n$ be divisible by $k - 1$, and let} $G$ be an $n$-vertex $k$-graph with $\delta_{d}(G) \geq (\th_{d}^{}(\ham_{k,1})+\mu) \binom{n-d}{k-d}$.
	Then $\Gs$ contains a loose Hamilton cycle with high probability.
\end{theorem}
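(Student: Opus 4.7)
The plan is to deduce \cref{thm:loose-hamilton-cycles-l=1} from the main result (\cref{thm:main}). The hypothesis of the main theorem reduces to two conditions on $G$: its defining class is approximately closed under taking random induced subgraphs of constant order, and a typical induced subgraph of constant order is Hamilton-connected in the loose sense, meaning it contains a loose Hamilton path between any two suitable ends (each end being the last $k-1$ vertices of the path, ready to be linked to the next edge via a single shared vertex).

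For the heredity condition, let $\cG$ denote the class of $k$-graphs $G$ with $\delta_d(G) \geq (\th_d(\ham_{k,1}) + \mu)\binom{n-d}{k-d}$. If $S$ is a uniformly random $m$-subset of $V(G)$ for a large fixed $m$, then hypergeometric concentration gives $\delta_d(G[S]) \geq (\th_d(\ham_{k,1}) + \mu/2)\binom{m-d}{k-d}$ with probability tending to $1$ as $m\to \infty$, since $d$-degrees concentrate tightly around their expectations. Hence $\cG$ is hereditary in the required sense. For the local Hamilton-connectivity, I take a constant-order induced subgraph $G'$ of $G$ that inherits the degree slack. By definition of $\th_d(\ham_{k,1})$, $G'$ admits a loose Hamilton cycle, and standard connect-and-absorb methods for loose cycles (e.g.~as in \cite{KMO10} and subsequent generalisations to lower-degree conditions) upgrade this to a loose Hamilton path between any prescribed pair of ends, using the extra degree slack $\mu/2$ to build short connectors.

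With the hypotheses of \cref{thm:main} verified, the main theorem provides a suitably spread distribution on loose Hamilton cycles in $G$ of spread density $q = \Theta(n^{-k+1})$, constructed via the reduction to perfect matchings in a higher uniformity advertised in the abstract. The Frankston--Kahn--Narayanan--Park resolution of a fractional form of Talagrand's conjecture \cite{FKN+21} then delivers a loose Hamilton cycle in $G_p$ with high probability as soon as $p \geq C q \log n = C n^{-k+1}\log n$, yielding the theorem. The $\log n$ factor (absent from \cref{thm:loose-hamilton-cycles-codegree} for $\ell \geq 2$) is necessary and sharp: it matches the coupon-collector threshold for every vertex of $G_p$ to lie on at least one edge, a trivial necessary condition for a loose Hamilton cycle.

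The main obstacle lies in the local Hamilton-connectivity step, especially for small $d$: when $d = 1$ the slack provided by the degree condition is weakest, and it is not immediate that the existing proofs of $\th_d(\ham_{k,1}) \leq \delta$ adapt transparently to yield Hamilton paths between prescribed ends. Some bookkeeping is required to separate absorption (producing the cycle) from connection (routing any pair of ends into the cycle), with both arguments tolerating the degree loss from passing to induced subgraphs on a constant number of vertices. Once this local statement is in hand, the remainder of the argument is a clean application of the main theorem's framework.
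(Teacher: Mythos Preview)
Your proposal is correct and follows essentially the same route as the paper: verify that minimum $d$-degree is inherited to constant-order random induced subgraphs (the paper's \cref{lem:grabbing-minimum-degree}), invoke the equality $\th_d(\SHC_{k,1}) = \th_d(\ham_{k,1})$ for loose Hamilton connectedness (the paper's \cref{thm:hamilton-connctedness-loose}, quoted from \cite{AKL+23}), and then apply \cref{thm:main}\ref{item:main-general}, since loose cycles are $(d^\ast,\lambda)$-balanced with $d^\ast = \lambda = 1/(k-1)$. One small slip: for loose cycles the overlap is $\ell = 1$, so the prescribed ends in the Hamilton-connectedness statement are single vertices, not $(k-1)$-tuples.
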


\section{A framework for robust Hamiltonicity}
\label{sec:framework}

In this section, we present our main result.
To hide some of the technical details, we begin with a simplified version for perfect matchings, which play a particularly important rule.

\subsection*{Perfect matchings}

Informally, our results assume that the (deterministic) host graphs have perfect matchings in a hereditary sense,
which means that typical induced subgraphs of constant order contain a perfect matching.
This can be formalised in terms of the recently introduced notion of property graphs~\cite{Lan23}.

\begin{definition}[Property graph]\label{def:property-graph}
	For a $k$-graph $G$ and a family of $k$-graphs $\sP$, the \emph{property graph} $\PG{G}{\sP}{s}$ is the $s$-graph with vertex set $V(G)$ and $S \subset V(G)$ is an edge if $|S|=s$ and $G[S]$ \emph{satisfies}~$\sP$, that is $G[S] \in \sP$.
\end{definition}

We denote by $\mat_k$ the family of all $k$-graphs that have a perfect matching.
Given this, we can already state our main result for perfect matchings.

\begin{theorem}[Robust perfect matchings]
	\label{thm:main-simple}
	For all $k, s_0$ with $s_0 \geq 5k$, there are $C>0$ and $s_1 \geq s_0$ such that the following holds.
	Suppose $G$ is an $n$-vertex $k$-graph and $\delta_1 \big(P\big) \geq  \left(1-s^{-2}\right)  \tbinom{n-1}{s-1}$ for  each $s_0 \leq s \leq s_1$ divisible by $k$,
	where $P = \PG{G}{\mat_k}{s}$.
	Then~$\Gs$ contains a perfect matching with high probability
	whenever $p \geq C n^{-k+1} \log n$ and $k$ divides $n$.
\end{theorem}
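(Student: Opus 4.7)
My plan is to apply the recent breakthrough of Frankston, Kahn, Narayanan and Park \cite{FKN+21} on Talagrand's fractional version of the Kahn--Kalai conjecture, which reduces the task to exhibiting a probability distribution $\mu$ on perfect matchings of $G$ that is $q$-spread for some $q = O(n^{-k+1})$; that is, $\Pr_{M \sim \mu}[F \subseteq M] \le q^{|F|}$ for every $F \subseteq E(G)$. Once such a $\mu$ is in hand, that theorem immediately yields that $\Gs$ contains a perfect matching with high probability whenever $p \ge C q \log n$, matching the bound in the statement.

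I would build $\mu$ via a two-stage random process. First, sample a perfect matching $\mathcal{M}$ of $P = \PG{G}{\mat_k}{s}$ for an appropriately chosen $s \in [s_0, s_1]$ divisible by $k$, drawn from a distribution $\nu$ that mimics a uniform random partition of $V(G)$ into $s$-sets in the following capture sense: each $k$-subset $T \subseteq V(G)$ is contained in a single part of $\mathcal{M}$ with probability $O((s/n)^{k-1})$, and more generally a matching of $t$ pairwise-disjoint $k$-subsets is simultaneously captured with probability $O((s/n)^{(k-1)t})$. Second, for each part $S \in \mathcal{M}$, independently pick a uniformly random perfect matching $M_S$ of $G[S]$, which is well-defined because $S \in E(P)$ forces $G[S] \in \mat_k$. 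Set $M = \bigcup_S M_S$. Spreadness of $\mu$ then reduces to the case of $F$ being a matching of $G$ (otherwise two edges of $F$ share a vertex and no perfect matching contains $F$); for such $F = \{e_1, \ldots, e_t\}$, the probability factors as the capture probability (bounded by $O((s/n)^{(k-1)t})$ via the spread of $\nu$) times the within-part realisation probability (bounded by $O_s(1)^t$, since each part has constant size $s$ and hence $G[S]$ contains only $O_s(1)$ perfect matchings), giving $q = O(n^{-k+1})$ after absorbing the $s$-dependent constants into $C$.

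The principal obstacle is constructing $\nu$, and it is precisely here that the hypothesis at \emph{all} scales $s_0 \le s \le s_1$ enters. Since $P^{(s)}$ has density only $1 - 1/s^2$ rather than being complete, a uniform random partition of $K_n^{(s)}$ generically contains many non-edges of $P^{(s)}$ and cannot simply be conditioned into a distribution on perfect matchings of $P^{(s)}$. My plan is to obtain $\nu$ iteratively across the cascade of scales: sample a coarser partition via $\PG{G}{\mat_k}{s'}$ at a larger $s' \le s_1$ (closer to complete, and therefore easier), then subdivide each $s'$-part using the property graph at scale $s$ restricted to that part, whose high-density condition is inherited for a typical $s'$-part by a probabilistic averaging argument. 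The base case at scale $s_1$ uses that $P^{(s_1)}$ is so near-complete that a short switching or absorbing correction of a uniform random partition already produces the desired spread matching. The key technical challenge is tracking the spread constants through the cascade and ensuring they accumulate to at most $O_s(1)$ per edge of $F$ at every level.
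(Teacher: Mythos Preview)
Your overall strategy---reduce via Frankston--Kahn--Narayanan--Park to finding an $O(n^{-k+1})$-spread distribution on perfect matchings of $G$, and build that distribution by sampling a spread perfect matching $\mathcal{M}$ in the property graph $P^{(s)}$ followed by a uniform perfect matching inside each part of $\mathcal{M}$---matches the paper's route. (Formally the paper reduces to its general chain theorem, whose proof specialised to $\ell=0$ is exactly this two-stage scheme; your capture-probability computation is the content of the paper's Spread-Matching-to-Correct-Distribution lemma.) The divergence, and the gap, is in how you obtain the spread distribution $\nu$ on perfect matchings of $P^{(s)}$.

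Your cascade does not work as stated. At the base scale $s_1$ the property graph $P^{(s_1)}$ has density only $1-s_1^{-2}$, and since $s_1$ is a \emph{constant} independent of $n$, a uniformly random partition of $[n]$ into $s_1$-sets has in expectation $\Theta(n/s_1^3)=\Theta(n)$ parts that are non-edges of $P^{(s_1)}$; a linear number of bad parts is not repairable by ``a short switching or absorbing correction''. The cascade buys nothing either, because subdividing a constant-size $s'$-part into $s$-subparts is itself a constant-size problem, so the spread bound must already be won at the top scale. (The paper uses the range $s_0\le s\le s_1$ not for a cascade but merely to have two nearby scales available for divisibility reasons when it splits $V(G)$.) The paper bypasses the issue: it deterministically makes $P^{(s)}$ balanced $s$-partite by first splitting $V(G)$ into $s$ equal classes, checks that the $s$-partite restriction retains minimum $1$-degree at least $(1-1/(3s))m^{s-1}$, and then invokes as a black box the known result (from \cite{KKK+22,PSS+22,KMP23}) that near-complete balanced $s$-partite $s$-graphs admit $O(n^{-s+1})$-spread perfect matchings. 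That nontrivial lemma is the ingredient missing from your plan.
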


Suppose $F$ is an $m$-vertex $k$-graph.
Note that an $F$-tiling in a $k$-graph $G$ can be viewed as a matching in an auxiliary $m$-graph on the same vertex set.
From this angle, it is not hard to see that \cref{thm:main-simple} together with the work of Riordan~\cite{Rio22} (see \cref{thm:rio}) already implies \cref{thm:perfect-tilings-minimum-degree,thm:perfect-tilings-uniformly-dense}.
The details can be found in \cref{sec:applications-proofs}.

\subsection*{Motivation}
Now let us turn to connected structures such as Hamilton cycles.
Our framework is stated in terms of non-uniform and directed hypergraphs.
This language has been adopted from a recent comprehensive study of Hamiltonicity in dense graphs~\cite{LS24a}.
Before we come to the details, let us briefly explain why this is a suitable setup.

For given $\mu>0$ and $n$ sufficiently large, consider a $3$-graph $G$ on $n$ vertices, and recall from \cref{sec:applications-tight} that $G$ contains a tight Hamilton cycle if $\delta_2(G) \geq (1/2 + \mu) n$ or $\delta_1(G) \geq (5/9 + \mu) \binom{n}{2}$.
It follows from the arguments of Rödl, Ruci\'nski and Szemerédi~\cite{RRS09a} that in the first case, one can in fact connect any pair of (disjoint) ordered edges with a tight Hamilton path.
So when $\delta_2(G) \geq (1/2 + \mu) n$, then $G$ satisfies a strong form of `Hamilton connectedness'.
This is no longer true for $\delta_1(G) \geq (5/9 + \mu) \binom{n}{2}$, as under this condition we may encounter edges that are not connected by any tight path~\cite{RRR19}.
The strategy to find a tight Hamilton cycle in $G$ therefore consists in first identifying a subgraph $H \subset G$ together with a subset of ordered edges, which are suitably connected.
(In fact, it suffices to consider `connectable' pairs of vertices.)
One then shows that $H$ contains a tight Hamilton cycle using only ordered edges.\footnote{We remark that the technical and conceptual details of this argument have so far lead to quite involved proofs~\cite{LS22,PRR+20,PRRS21,RRR19}.
	In the our~\cite{LS24a}, a new approach to this problem is taken, which leads to a more efficient proof and a more general picture.}
Our setup is therefore directly formulated in terms a $3$-graph $H$ together with a set of `connectable' pairs of vertices.
Moreover, once we have internalised this perspective, it becomes natural to state our main result (\cref{thm:main}) in terms of a non-uniform directed hypergraph, whose edges consist of $k$-tuples (corresponding to the edges of $H$) and $j$-tuples (corresponding to the connectable tuples of $H$).

We now turn to the details.
A \emph{directed hypergraph} (\emph{digraph} for short) $G$ consists of a set of vertices $V(G)$ and a set of edges $E(G)$ where each \emph{edge} is a tuple of vertices without repetitions.
If~$E(G)$ contains only edges of \emph{uniformity} $k$ (that is, only $k$-tuples), we speak of a \emph{$k$-digraph}.
Similarly, if $E(G)$ contains only tuples of uniformity $k$ and $\ell$, we speak of a \emph{$(k,\ell)$-digraph}.
We let $G^{(k)}$ be the $k$-digraph on vertex set $V(G)$ whose edge set is restricted to the $k$-edges of $G$.
Concepts such as (induced) subgraphs extend to digraphs in the obvious way.
For $p \in [0,1]$, we denote by $\RS{G}{p}{(k)}$ the randomly sparsified $k$-digraph obtained by keeping every $k$-edge of $G$ with probability $p$ and all edges of uniformity other than $k$ with probability $1$.

We remark that \cref{def:property-graph} extends straightforward to digraph families $\sP$.
More precisely, $\PG{H}{\sP}{s}$, is the (undirected) $s$-graph on vertex set $V(G)$ with an edge $S \subset V(H)$ whenever the induced subgraph $G[S]$ \emph{satisfies}~$\sP$, that is $G[S] \in \sP$.

\subsection*{Links and chains}

Now we formalise the class of Hamilton cycles that can be embedded by our main result.
We view these structures as (closed) `chains' whose `links' are isomorphic (ordered) hypergraphs.
As we shall see, this allows us to capture all the situations presented in \cref{sec:applications} (and many more).

A \emph{$(k,\ell,r)$-link} $L$ is a directed $k$-graph on $r+\ell \geq k$ vertices whose vertices are ordered such that the last $\ell$ vertices induce an independent set.
(In the context of $(k,\ell,r)$-links, we always assume that $k \geq 2$, $r\geq 1$ and $\ell \geq 0$ with $r+\ell\geq k$.)
An \emph{$L$-chain} is obtained by connecting copies of $L$ such that consecutive links intersect in the first and last $\ell$ vertices, respectively.
More precisely, in an $L$-chain on vertex set $[rm+\ell]$ (equipped with the natural ordering) every vertex sequence $ir+1,\dots,ir+r+\ell$ for $0\leq i < m$ induces an order preserving copy of $L$.\footnote{By \emph{order preserving}, we mean that $j$th vertex of $L$ is mapped to the $j$th vertex of the sequence.}
A \emph{closed $L$-chain} on vertex set $[rm]$, where $rm\geq 2(r+\ell)$,
is a directed $k$-graph such that every vertex sequence $ir+1,\dots,ir+r+\ell$ for $0\leq i < m$ induces an order preserving copy of $L$ (index computation modulo $rm$).
An $L$-chain whose vertex ordering starts with $X$ and ends with $Y$
is a \emph{Hamilton $L$-chain} from~$X$ to $Y$ in some host graph $G$ if it contains all vertices of $G$.
Note that a closed $L$-chain $C$ on $n$ vertices consists of $n/r$ copies of $L$.
Moreover, $e(C) / n = e(L) /r$.
It is easy to see that the spanning structures from \cref{sec:applications} (and many others) can be expressed as $L$-chains for suitable parameters.

\begin{example}\label{exa:chains}
	We can model $\ell$-cycles in $k$-graphs, by taking $L$ to be the $(k,\ell,k-\ell)$-link on vertex set $\{1,\dots,k\}$ consisting of a single edge $(1,\dots,k)$.
	(Note that the case $\ell=0$ corresponds to matchings.)
	Similarly, the $\ell$th power of a $2$-uniform cycle can be expressed by taking $L$ to be the $(2,\ell,1)$-link on vertex set $\{1,\dots,\ell+1\}$ with an edge $(1,i)$ for every $2 \leq  i \leq \ell+1$.
\end{example}

Next, we define the property of `Hamilton connectedness', which we intend to use in conjunction with the property graph.

\begin{definition}[Hamilton connectedness]
	Let $L$ be a $(k, \ell, r)$-link.
	A $(k,\ell)$-digraph $G$ is \emph{Hamilton $L$-connected} if $G^{(\ell)}$ contains at least two disjoint $\ell$-tuples
	and for every two disjoint $\ell$-tuples $X,Y$ in~$G^{(\ell)}$, there is a  Hamilton $L$-chain from $X$ to $Y$ in $G$.
	Denote by $\HamConn_L$ the family of Hamilton $L$-connected digraphs, and note that these digraphs have order congruent to $\ell$ modulo $r$.
\end{definition}

Finally, we require the following property of $L$-chains, which plays a conceptually similar role to $1$-density in random graph embedding problems.
We say that an $n$-vertex $L$-chain $C$ is \emph{$(d, \lambda)$-balanced} if for each $I$ subgraph of $C$ on $v$ vertices and $e > 0$ edges,
we have
\begin{enumerate}[\upshape{(B\arabic*)}]
	\item \label{balanced:global} $e \leq dv$, and
	\item \label{balanced:small} if $v \leq \frac{n}{2v(L)}$, then also $e \leq dv - \lambda$.
\end{enumerate}

Most cases considered in the literature fulfil this property for suitable choices of $d$ and $\lambda$.
In fact, we also verify later that this is true for all the examples of $L$-chains mentioned above.
The details can be found in \cref{sec:applications-proofs}.

\subsection*{Main result}

Now we are ready to present our main result.
It states that
given a host $(k,\ell)$-digraph $G$ which is locally Hamilton $L$-connected (captured via the property graph),
then $\RS{G}{p}{(k)}$ contains a closed Hamilton $L$-chain with high probability, where $p$ is within a constant factor of the optimal value.

\begin{theorem}[Robustness of chains -- Main result]\label{thm:main}
	For each $k \geq  2$, $r \geq  1$, $\ell \geq 0$, $s_1 \geq  5(r+\ell) \geq  k$ and $q=2 \ell + 1$, there is $C>0$ such that the following holds.
	Let $d,\lambda>0$ and let $L$ be a $(k, \ell, r)$-link such that sufficiently large closed $L$-chains are $(d, \lambda)$-balanced.
	Let~$G$ be an $n$-vertex $(k,\ell)$-digraph where $n$ is divisible by $r$.
	Suppose that  for every $s_1 \leq s \leq 4 s_1$ with $s \equiv \ell \bmod r$, the $s$-graph $P = \PG{G}{\HamConn_L}{s}$ satisfies $\delta_{q}(P) \geq (1 - s^{-2}) \binom{n-q}{s - q}$.
	Suppose further that
	\begin{enumerate}[\upshape (i)]
		\item \label{item:main-general} $\lambda \geq d$ and $p \geq C n^{-1/d} \log n$
		      or
		\item \label{item:main-balanced} $\lambda>d$ and $p = \omega(n^{-1/d})$.
	\end{enumerate}
	Then $G' \sim \RS{G}{p}{(k)}$ contains a closed Hamilton $L$-chain with high probability.
\end{theorem}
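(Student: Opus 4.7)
My plan is to apply the fractional expectation threshold theorem from the resolution of Talagrand's conjecture~\cite{FKN+21}, which reduces the statement to exhibiting a sufficiently spread probability distribution $\mu$ on the closed Hamilton $L$-chains of $G$, with target spread parameter of order $n^{-1/d}$. The proof then proceeds in three stages matching the section headings: \emph{typical-to-correct}, \emph{correct-to-spread}, and \emph{spread-to-threshold}.

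The key reduction is from closed Hamilton $L$-chains to perfect matchings in a higher-uniformity auxiliary hypergraph, following the strategy indicated in the paper's introduction. Since $r \mid n$ and $s \equiv \ell \pmod{r}$ for the allowed values of $s$, the vertex set $V(G)$ admits cyclic decompositions into segments $S_1, \dots, S_t$ whose sizes lie in $[s_1, 4 s_1]$ and which consecutively overlap in prescribed $\ell$-tuples. If each $S_i$ is an edge of the property graph $P = \PG{G}{\HC_L}{s}$, then $G[S_i]$ is Hamilton $L$-connected and thus contains an internal Hamilton $L$-chain between the two boundary $\ell$-connectors; concatenating these yields a closed Hamilton $L$-chain. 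The problem of sampling a segment decomposition is exactly that of sampling a perfect matching in an auxiliary hypergraph built from $P$, which brings us into the regime handled by the machinery underlying \cref{thm:main-simple}.

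In the typical-to-correct step (\cref{sec:typical-to-correct-distribution-proof}) I would use the hypothesis $\delta_q(P) \geq (1 - s^{-2}) \binom{n - q}{s - q}$ with $q = 2\ell + 1$ to construct a distribution whose marginals on small vertex sets are close to uniform. The exponent $q = 2\ell + 1$ is precisely what is needed to fix the two $\ell$-boundary connectors of a segment together with one additional pivot vertex that anchors it in the cyclic ordering; the near-complete $q$-degree then ensures, via a nibble/absorption argument, that random extensions of this partial data lie inside $P$ with probability close to one. In the correct-to-spread step (\cref{sec:correct-to-spread-proof}) I convert the correct distribution into a spread distribution by exploiting the $(d, \lambda)$-balancedness of the target chain: for any test subgraph $I$ of a closed $L$-chain with $v$ vertices and $e$ edges, condition (B1) ($e \leq d v$) delivers the $n^{-1/d}$-spread for large $I$, while condition (B2) ($e \leq d v - \lambda$ for small $I$) provides the extra gap needed to absorb the $\log n$ factor in case~(i) and, under the strict inequality $\lambda > d$, yields the $\omega(n^{-1/d})$ improvement of case~(ii).

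The main obstacle I expect is the correct-to-spread passage. The probability that a random closed $L$-chain drawn from the correct distribution contains a fixed subgraph $I$ factorises over the segments of the cyclic decomposition, but this product must be bounded purely in terms of the global structure of $I$; pathological subgraphs concentrated in one or two segments threaten the spread bound, and condition (B2) is precisely what excludes them. The final spread-to-threshold step (\cref{section:strongspread-to-threshold}) is then a direct application of the Park--Pham-type theorem to extract the high-probability statement for $\RS{G}{p}{(k)}$, with the dichotomy between cases~(i) and~(ii) reflecting whether (B2) holds with equality or strict inequality.
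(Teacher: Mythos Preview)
Your three-stage outline (correct distribution $\to$ spread $\to$ threshold) matches the paper's architecture exactly, and your use of the $(d,\lambda)$-balancedness to pass from correct to spread is the right idea. However, the first stage --- the construction and analysis of the correct distribution --- has a genuine gap.

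You propose to sample a cyclic decomposition into \emph{overlapping} segments $S_1,\dots,S_t$ and cast this as a single perfect-matching problem in an auxiliary hypergraph built from $P$. But overlapping segments do not form a matching, and the $\ell$-connectors at the boundaries cannot simply be ``fixed together with one additional pivot vertex'' and then extended by a nibble/absorption argument. The paper instead runs a genuinely two-phase construction: first partition $V(G)=V_1\cup V_2$, choose a spread perfect matching in (a partite restriction of) $P[V_1]$ to obtain $m$ \emph{disjoint} open $L$-chains $R_1,\dots,R_m$, then choose a random cyclic order on these chains, and finally choose a second spread matching in another auxiliary hypergraph on $V_2$ (augmented with the $2\ell$-sets $T_i=R_i^+\cup R_{i+1}^-$) to produce connectors $C_1,\dots,C_m$. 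The spread-matching input is \cref{lem:minimum-degree-to-spread-partite-perfect-matching}, not a nibble.

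More importantly, the target ``correct'' bound is not merely near-uniform marginals on small sets: it is $\probability[I\subseteq C]\le K^{v(I)}n^{c(I)-v(I)}$, with $c(I)$ the number of \emph{components} of $I$. Verifying this bound for the two-phase construction is the technical heart of the argument and requires a delicate combinatorial analysis: one must track how the components of $I$ split between $V_1$ and $V_2$ into ``minicomponents'', control which chains $R_i$ can house which minicomponents via auxiliary graphs $\Gamma$ and $\Xi$ encoding adjacency and cyclic obstructions, and bound separately the probabilities of the events ``$R$ is valid'', ``the random cyclic order is consistent with $\Xi$'', and ``the non-floating pieces of $I_2$ are anchored correctly''. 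Your proposal does not anticipate this component-counting structure, and without it the correct-to-spread step cannot be fed the right input. (A minor point: in case~(i) the $\log n$ is not ``absorbed'' by (B2); rather, with $\lambda=d$ one only obtains ordinary $q$-spread, and the $\log n$ in $p$ is the FKNP overhead. It is only in case~(ii), $\lambda>d$, that (B2) yields the stronger \emph{strongly-spread} condition needed for the Espuny~D\'iaz--Person refinement that removes the logarithm.)
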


In \cref{sec:applications-proofs} it is shown that \cref{thm:main-simple} can easily be derived from this result.

\subsection*{Outline of the proof}

The proof of \cref{thm:main} relies on the recent breakthrough of Frankston, Kahn, Narayanan and Park~\cite{FKN+21} on Talagrand's conjecture.
Roughly speaking, this reduces the problem of finding a closed Hamilton $L$-chain in  $G' \sim \RS{G}{p}{(k)}$ to finding a suitably `spread' distribution on closed Hamilton $L$-chains of $G$.

Our key contribution (\cref{lem:typical-to-correct-distribution}) therefore concerns the existence of such a `spread' distribution.
In the proof, we specify a random procedure that results in a closed Hamilton $L$-chain in $G'$ as follows.
We first cover around half the vertices with short $L$-chains.
Then we use the remaining uncovered vertices cyclically to connect the $L$-chains of the first step with other $L$-chains.
As it turns out, both of these steps can be implemented by solving a perfect matching problem in suitable variants of the property $s$-graph $P$  of $\HamConn_L$ in $G$.
The heart of the argument then consists in showing that the distribution arising from this procedure is `spread' provided that the property graph $P$ has itself has a `spread' distribution for perfect matchings (\cref{lem:spread-property-matching-to-correct-distribution}).
In other words, we reduce the problem to the setting of minimum degree conditions for perfect matchings, which explains the degree condition on $P$ in \cref{thm:main}.
Suitably `spread' distribution on perfect matchings under minimum degree conditions are known to exist due to the work of Kang, Kelly, K{\"u}hn, Osthus and Pfenninger~\cite{KKK+22}, Pham, Sah, Sawhney, Simkin~\cite{PSS+22} and Kelly, Müyesser and Pokrovskiy~\cite{KMP23}.

This sums up most of the proof ideas.
To obtain the improved bounds on $p$ in \cref{thm:main}\ref{item:main-balanced}, we have to dig a bit deeper into the technical machinery.
In particular, we use a refinement of the work of Kahn, Narayanan and Park~\cite{FKN+21} due to Espuny Díaz and Person~\cite{EP23}, which involves only a constant number of `planting steps' and thus avoids the factor $\log n$.

\section{Proofs of the applications}
\label{sec:applications-proofs}

In this section we derive the results stated in \cref{sec:applications} from our main result (\cref{thm:main}).
For convenience, we express some of the constant hierarchies in standard $\gg$-notation.
To be precise, we write $y \gg x$ to mean
that for any $y \in (0, 1]$, there exists an $x_0 \in (0,1)$
such that for all $x \leq x_0$ the subsequent statements
hold.  Hierarchies with more constants are defined in a
similar way and are to be read from left to right following the order that the constants are chosen.

\begin{remark}\label{rem:coupling}
	The random model of \cref{thm:main} is formulated in terms of directed hypergraphs, while our applications take place in the unordered setting.
	However, one can move from one random model to the other at negligible cost by a simple coupling argument.
	The details of this are spelled out in the proof of \cref{thm:main-simple}.
\end{remark}

\subsection{Perfect matchings}

To warm up, we derive \cref{thm:main-simple} from \cref{thm:main}.

\begin{proof}[Proof of \cref{thm:main-simple}]
	Set $\ell = 0$, $r=k$ and $s_1 = 4s_0$..
	We obtain $C>0$ from \cref{thm:main} with $s_0$ playing the role of $s_1$.
	Now suppose we are given $\eps > 0$.
	We show that for $n$ large enough,
	any $n$-vertex $k$-graph $G$ as in the statement and $p \geq k!C n^{-k+1} \log n$,
	a random sparsification $G' \sim G_p$, in which every edge is kept with probability $p$, contains a perfect matching with probability at least~$1-\eps$.

		{To this end, we define $H$ as the $(k,k)$-digraph on $V(G)$ whose ordered edges consist of the $k$-tuples obtained by ordering the vertices of each $k$-edge of $G$ in all possible ways.}
	Let $L$ be the $(k,0,k)$-link on vertex set $\{1,\dots,k\}$ consisting of a single edge $(1,\dots,k)$.
	Note that for any  $S \subset V(G)$, a perfect matching of $G[S]$ corresponds to $H[S]$ satisfying $\HamConn_L$.
	So the property $s$-graph $P = \PG{H}{\HamConn_L}{s}$ of $\HamConn_L$ in $H$ satisfies $\delta_{1}(P) \geq (1 - s^{-2}) \binom{n-1}{s - 1}$  for each $s_0 \leq s \leq 4 s_0$ which is divisible by $k$.
	Finally, it is easy to see that every $L$-chain is $(1/(k-1), 1/(k-1))$-balanced.
	We can therefore conclude by \cref{thm:main} that for $n$ large enough, a random sparsification $H' \sim H_{p/k!}$, in which every ordered edge is kept with probability $p/k!$, contains a closed Hamilton $L$-chain with probability at least $1-\eps$.

	Given an outcome $H' \sim H_{p/k!}$, denote by $G' \subset G$ the $k$-graph on $V(G)$ that contains a $k$-edge $\{v_1,\dots,v_k\}$ if $(v_1,\dots,v_n)$ is an ordered $k$-edge in $H'$.
	It follows that $G' \sim G_q$ with $q = 1-(1-{p/k!})^{k!} \leq p$.\COMMENT{Here we used that $(1+x)^r \geq 1+rx$ for $x \geq -1$ and $r \geq 1$.}
	Since a closed Hamilton $L$-chain in $H'$ corresponds to a perfect matching in~$G'$, we are done.
\end{proof}

\subsection{Perfect tilings}

Next we prove \cref{thm:perfect-tilings-minimum-degree}.
We remark that we do not need the full strength of \cref{thm:main} to show this.
In fact, it suffices to apply the special case for degree conditions of perfect matchings,
which was already known~\cite{KKK+22,PSS+22}, together with the work of Riordan~\cite{Rio22}.
Hence the purpose of this section is to illustrate how one transitions the abstract condition of \cref{thm:main} to Dirac-type results.

Minimum degree conditions are approximately inherited to typical induced subgraphs of constant order.
To formalise this end, we define $\DegF{d}{\delta}$, for any $0 \leq d < k$, as the family of all $k$-graphs~$G$ with $\delta_d(G) \geq \delta \binom{v(G)-d}{k-d}$.
The following result can be proven by applying concentration inequalities
and appears in various variations in the literature;
this specific version can be found in \cite[Lemma 4.9]{Lan23}.

\begin{lemma}[Minimum degree inheritance] \label{lem:inheritance-minimum-degree}
	Let $1/k,\,1/q,\,\mu \gg 1/s \gg 1/n$ and $\delta \geq 0$.
	Suppose $G$ is an $n$-vertex $k$-graph with $\delta_d(G) \geq (\delta + \mu) \tbinom{n-d}{k-d}$.
	Then $P:=\PG{G}{\DegF{d}{\delta+\mu/2}}{s}$ satisfies $\delta_{q}(P) \geq \left(1-e^{-\sqrt{s}} \right)  \tbinom{n-q}{s-q}$.
\end{lemma}

A similar result holds for uniformly dense graphs.
The next lemma appears in \cite[Lemma 9.1]{Lan23} (which also briefly surveys related work for other types of hypergraph quasirandomness).

\begin{lemma}[Uniform density inheritance]\label{lem:grabbing-uniformly-dense}
	Let $1/k,\,1/{q},\,d \gg \eps' \gg \eps,\, 1/s \gg 1/n$.
	Suppose $G$ is an $n$-vertex $k$-graph satisfying $\DenF{\eps}{d } $.
	Then $P=\PG{G}{\DenF{ \eps'}{d}}{s}$  satisfies  $\delta_{q}(P) \geq \left(1-e^{-\sqrt{s}} \right)  \tbinom{n-q}{s-q}$.
\end{lemma}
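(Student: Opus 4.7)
The plan closely mirrors the companion \cref{lem:grabbing-minimum-degree}. Fix a $q$-set $Q \subseteq V(G)$ and consider a uniformly random $s$-superset $S$ of $Q$; I would show
\[
\Prob\bigl[G[S] \notin \DenF{\eps'}{d}\bigr] \leq e^{-\sqrt{s}},
\]
which, summed over the $\binom{n-q}{s-q}$ $s$-supersets of $Q$, gives the desired bound $\delta_q(P) \geq (1-e^{-\sqrt{s}}) \binom{n-q}{s-q}$.

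The main difficulty is that the definition of $\DenF{\eps'}{d}$ quantifies over all tuples $(X_1,\dots,X_k)$ with $X_i \subseteq S$, which is too large for a direct union bound. I would replace the quantifier by a finite collection of canonical tests: fix a scale $\eta$ with $\eps \ll \eta \ll \eps'$, partition $V(G)$ into blocks of size $\eta n$, and consider canonical tuples $(X_1^\star,\dots,X_k^\star)$ where each $X_i^\star \subseteq V(G)$ is a union of blocks, giving at most $2^{k/\eta}$ such tuples. An arbitrary $(X_1,\dots,X_k) \in (2^S)^k$ is approximated by $(X_1^\star \cap S,\dots,X_k^\star \cap S)$ for the closest canonical choice, with round-off errors absorbed by the slack between $\eps$ and $\eps'$.

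For each canonical tuple, concentration (hypergeometric or Chernoff for the sizes $|X_i^\star \cap S|$ with Lipschitz constant $1$ per vertex swap, and Azuma-Hoeffding or Kim-Vu for the edge count with Lipschitz constant $O(s^{k-1})$) shows that with failure probability $e^{-\Omega(s)}$,
\[
|X_i^\star \cap S| \approx (s/n)|X_i^\star| \quad\text{and}\quad e_{G[S]}(X_1^\star \cap S,\dots,X_k^\star \cap S) \approx (s/n)^k \, e_G(X_1^\star,\dots,X_k^\star).
\]
Combined with $(\eps, d)$-uniform density of $G$ — which gives $e_G(X_1^\star,\dots,X_k^\star) \geq d\prod_i |X_i^\star| - \eps n$ — and the estimate $(s/n)^k\,\eps n \leq \eps s \leq \eps' s/2$, this yields the $(\eps', d)$-density inequality for $G[S]$ on the canonical tuple. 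A union bound over the $2^{k/\eta}$ canonical tuples then caps the overall failure probability at $e^{-\sqrt{s}}$.

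The hardest step is balancing discretisation and concentration: the round-off error when rounding arbitrary tuples to canonical ones is $O(\eta s^k)$ in the edge count, while only $O(\eps' s)$ of slack is available, and yet the concentration must be sharp enough to survive the union bound over $2^{k/\eta}$ tests. The standard resolution exploits that the $(\eps', d)$-density condition is vacuous whenever $d\prod_i|X_i| \leq \eps' s$, so the discretisation need only be performed in the complementary ``non-trivial'' regime, where the sizes $|X_i|$ are large enough that the edge count admits sharper concentration (via Kim-Vu or a multilinear Chernoff bound), allowing the error budget to be met.
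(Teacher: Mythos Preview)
The paper does not give its own proof; it quotes the lemma from \cite[Lemma 9.1]{Lan23}. So there is no in-paper argument to compare against, and I can only assess your sketch on its own terms.

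Your discretisation step does not work as described. Partitioning $V(G)$ into $1/\eta$ blocks of size $\eta n$ induces a partition of $S$ into pieces $B_j \cap S$ of size roughly $\eta s$, and the candidate approximants $X_i^\star \cap S$ are exactly the unions of these pieces. An arbitrary $X_i \subseteq S$ --- for instance one containing exactly half of each piece $B_j \cap S$ --- sits at symmetric-difference distance $\Theta(s)$ from every such union, so the edge-count round-off is $\Theta(s^k)$, not $O(\eta s^k)$. This obstruction already occurs for $|X_i| = s/2$, squarely in the non-vacuous regime, so your fallback does not save it. A coarse partition of the ambient set $V(G)$ simply gives no handle on arbitrary subsets of the small random set $S$; the two live at incompatible scales.

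This is the crux of the problem, not a cosmetic detail. A naive union bound over all $2^{ks}$ tuples of subsets of $S$ would need per-tuple failure $e^{-\Omega(ks)}$, whereas McDiarmid with Lipschitz constant $O(s^{k-1})$ and deviation of order $\eps' s^k$ delivers only $e^{-\Omega((\eps')^2 s)}$; since $\eps' \ll 1$, this falls short by a constant factor in the exponent that no discretisation of $V(G)$ can recover. A correct argument must reduce the effective number of tests over subsets of $S$ itself (or bypass the union bound altogether via a different characterisation of uniform density), and your sketch does not supply that reduction.
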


A straightforward application of \cref{thm:main} suffices to show that \cref{thm:perfect-tilings-minimum-degree} holds for $p = \Omega(n^{-1/{d_1}} \log n)$.
However, this is not quite what we are aiming for.
To obtain the correct fractional power of the logarithmic term, we combine \cref{thm:main-simple} with following result due to Riordan~\cite{Rio22}.

For a $k$-graph $F$ on $m$ vertices,
let $\ER{n}{q}{m}(F)$ be the random $m$-graph where (the vertex set of) each potential $F$-copy in $K_n^{(k)}$ is independently present with probability $q$ (there may be parallel edges, but this is irrelevant for us).
Riordan's result links the appearance of copies of $F$ in $\ER{n}{p}{k}$ to $\ER{n}{q}{m}(F)$ for appropriate $q$.

\begin{theorem}[{\cite[Theorem 18]{Rio22}}]\label{thm:rio}
	Let $F$ be a fixed strictly 1-balanced $k$-graph with $v(F)=m\geq 3$ and $e(F) = s$.
	Let $d_1 = s/(m -1)$.
	There are $\eps,C>0$ such that if $p = p(n) \leq n^{-1/d_1+\eps}$,
	then, for some $q = q(n) = (1+o(1))Cp^s$,
	we may couple $\ER{n}{p}{k}$ and  $\ER{n}{q}{m}(F)$   such that
	with high probability for every edge present in $\ER{n}{q}{m}(F)$ the corresponding copy of $F$ is present in $\ER{n}{p}{k}$.
\end{theorem}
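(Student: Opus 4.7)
The plan is to construct the coupling by sampling $G \sim \ER{n}{p}{k}$ first and then selecting edges of $\ER{n}{q}{m}(F)$ from within $G$. Concretely, I would attach to each potential $F$-copy $F'$ in $K_n^{(k)}$ an independent auxiliary random variable $\xi_{F'} \sim \mathrm{Bernoulli}(r)$ (independent of $G$), with $r$ tuned so that $q := r p^s = (1+o(1))C p^s$, and then define the candidate edge-indicator $Y_{F'} := \ind[F' \subseteq G]\cdot \xi_{F'}$. By construction, $Y_{F'} = 1$ forces every edge of $F'$ into $G$, and the marginal probability is exactly $q$, so if the collection $(Y_{F'})_{F'}$ had the product law $\bigotimes_{F'}\mathrm{Bernoulli}(q)$, we would be done.

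The joint law of $(Y_{F'})_{F'}$ is not exactly a product because the events $\{F' \subseteq G\}$ are positively correlated through shared edges. The bulk of the argument is therefore to show that the total-variation distance between the law of $(Y_{F'})_{F'}$ and the target product $\bigotimes_{F'}\mathrm{Bernoulli}(q)$ is $o(1)$; once this is established, the maximal coupling theorem yields a joint distribution in which an honest sample of $\ER{n}{q}{m}(F)$ agrees with $(Y_{F'})_{F'}$ with high probability, producing the coupling demanded by the theorem.

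To bound this total variation I would exploit that $F$ is strictly $1$-balanced. For any proper subgraph $F'' \subsetneq F$ on $v''$ vertices with $e'' > 0$ edges, a pair of $F$-copies that intersect in an $F''$-copy contributes joint probability $p^{2s - e''}$, while independent copies would contribute $p^{2s}$; the number of ordered such overlapping pairs is of order $n^{2m - v''}$, so the relative excess second-moment contribution is of order $(p^{e''} n^{v''})^{-1}$. Strict $1$-balancedness gives $d_1(F'') < d_1(F)$, so for $p \leq n^{-1/d_1 + \varepsilon}$ with $\varepsilon$ sufficiently small each such ratio is $n^{-\Omega(1)}$; summing over all intersection types and running an inclusion--exclusion / Janson-style estimate yields the required $o(1)$ total-variation bound. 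The hardest step is exactly this bookkeeping: one must control excess contributions uniformly over all overlap patterns of pairs, triples, and higher tuples of $F$-copies, and this is precisely the point where strict $1$-balancedness is indispensable---without it, some sparser sub-structure would attain $d_1(F'') \geq d_1(F)$, destroy the Poisson-like approximation and block the coupling.
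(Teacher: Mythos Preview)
The paper does not prove this theorem; it is quoted as a black box from Riordan~\cite{Rio22} and applied in the proof of \cref{thm:perfect-tilings-minimum-degree}. There is therefore no in-paper proof to compare against, but your sketch itself has a genuine gap.

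Your central claim, that the law of $(Y_{F'})_{F'}$ lies within $o(1)$ total-variation distance of the product $\bigotimes_{F'}\mathrm{Bernoulli}(q)$, is false. Take $F=K_3$, $p=n^{-2/3}$, $r=\Theta(1)$. The number of ordered pairs of triangles sharing one edge is $\Theta(n^4)$; under your correlated law the expected number of such pairs with $Y_{T_1}=Y_{T_2}=1$ is $\Theta(n^4 r^2 p^5)=\Theta(n^{2/3})$, whereas under the product law it is $\Theta(n^4 q^2)=\Theta(n^4 r^2 p^6)=\Theta(1)$. The event ``at least $n^{1/3}$ such overlapping pairs'' thus has probability $1-o(1)$ under your law and $o(1)$ under the product law, so the total-variation distance is $1-o(1)$, not $o(1)$. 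A Janson/second-moment estimate controls the distribution of the \emph{sum} $\sum_{F'}Y_{F'}$, not the joint law of a $\{0,1\}^{\Theta(n^m)}$-valued vector; it is precisely these overlap statistics that separate the two measures. Consequently the maximal-coupling step cannot deliver agreement with high probability. Riordan's proof does not pass through total variation at all; it constructs the coupling directly by a sequential argument that handles the edge-sharing dependencies explicitly, and this is where the real work (and the strict $1$-balancedness hypothesis) enters.
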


Observe that Riordan's result not only applies to $\ER{n}{p}{k}$ but also to $G_p$ for any $n$-vertex $k$-graph $G$ and $G_{q}(F)$, where $G_{q}(F)$ refers to the sparsification with probability $q$ of the $m$-graph on $V(G)$ where each copy of $F$ in  $G$ is represented by an $m$-edge.
To see this, we view $G_p$ as a subgraph of $\ER{n}{p}{k}$ and~$G_{q}(F)$ as a subgraph of $\ER{n}{q}{m}(F)$.
Then given the coupling succeeds, an edge in $G_{q}(F)$ gives rise to a copy of $F$ in $G_p$.

\begin{proof}[Proof of Theorem~\ref{thm:perfect-tilings-minimum-degree}]
	Consider $1 \leq d \leq k-1$, $\mu>0$ and a $1$-balanced $k$-graph $F$ on $m$ vertices as in the statement.
	Set $\delta = \th_d^{}(\til_F)$.
	We choose $s_0$ divisible by $m$ sufficiently large to apply \cref{lem:inheritance-minimum-degree} with $q=1$ and sufficiently large with respect to $m$ and $\mu$.
	Let $C_1, s_1$ be obtained from \cref{thm:main-simple} with $m$ playing the role of $k$.
	Let $G$ be an $n$-vertex $k$-graph with $\delta_d(G)\geq (\delta+\mu)\binom{n-d}{k-d}$ with $n$ divisible by $m$.
	Let $H$ be the $m$-graph on $V(G)$ with an edge $Y$ of size $m$ whenever $G[Y]$ contains a copy of $F$.

	Fix $s_0 \leq s \leq s_1$ divisible by $m$, and set $P = \PG{H}{\mat_m}{s}$ to be the property $s$-graph of $H$ with respect to the property of having a perfect matching.
	We claim that $\delta_{1}(P) \geq (1-s^{-2}) \binom{n-1}{s - 1}$.
	To see this, let $Q=\PG{G}{\DegF{1}{\delta+\mu/2}}{s}$.
	So $Q$ is the property graph with an $s$-edge $S \subset V(G)$ whenever $\delta_d(G[S]) \geq (\delta + \mu/2) \binom{s-d}{k-d}$.
	By \cref{lem:inheritance-minimum-degree}, we have $\delta_1(Q) \geq (1-e^{-\sqrt{s}}) \binom{n-1}{s - 1} \geq (1-s^{-2}) \binom{n-1}{s - 1}$.
	Moreover, by the definition of $\delta = \th_d^{}(\til_F)$, the induced $k$-graph $G[S]$ has a perfect $F$-tiling for every edge of $Q$.
	So $H[S]$ has a perfect matching for every edge of $Q$.
	This gives $Q \subset P$, and the claim follows.

	Now let $q = C_1 n^{-m+1} \log n$.
	Since all assumptions are satisfied, we may apply \cref{thm:main-simple} to see that $H' \sim H_q$ contains a perfect matching with high probability.
	Put into the language of \cref{thm:rio}, we have that $G_{q}(F)$ contains a perfect matching with high probability.
	Let $p$ be defined by $q=C_2p^{e(F)}$, where $C_2$ is given by Theorem~\ref{thm:rio}.
	Observe that $p\leq n^{-1/d_1(F) + o(1)}$ by definition of $1$-density.
	Hence, we can couple $\Gs$ and $G_{q}(F)$ such that with high probability each edge in $G_{q}(F)$ yields a copy of $F$ in $G_p$, which completes the proof.
\end{proof}

The proof of \cref{thm:perfect-tilings-uniformly-dense} is almost identical to the proof of \cref{thm:perfect-tilings-minimum-degree}, and hence we omit the details.
The only difference is that we also use \cref{lem:grabbing-uniformly-dense} to verify that uniform density is inherited in a strong form.

\subsection{Loose cycles}\label{sec:loose-cycles-proofs}
In this section, we show \cref{thm:loose-hamilton-cycles-codegree,thm:loose-hamilton-cycles-l=1}.
Let us introduce a threshold for this purpose.
For a $k$-graph $G$ and $1\leq \ell<k$, we denote by $\partial_\ell(G)$ the set of $\ell$-sets in $V(G)$ which are contained in an edge of $G$.
Let us say that a $k$-graph $G$ is \emph{strongly Hamilton $\ell$-connected} if $\delta_\ell(G) > 0$ and for every pair of disjoint $\{v_1,\dots,v_\ell\},\,\{w_1,\dots,w_\ell\} \in \partial_\ell(G)$, there is a Hamilton $\ell$-path that starts with $(v_1,\dots,v_\ell)$ and ends with $(w_1,\dots,w_\ell)$.
We define $\th_d^{}(\SHC_{k,\ell})$ as the infimum over all $\delta \in [0,1]$ such that for all $\mu>0$ and large enough $n$,
every $n$-vertex $k$-graph $G$ with $n \equiv \ell \bmod (k-\ell)$ and $\delta_d(G) \geq (\delta +\mu) \binom{n-d}{k-d}$ is strongly Hamilton $\ell$-connected.

While strong Hamilton connectedness is in general considerably harder to obtain, it is known for the case $d=k-1$ that $\th_{k-1}^{}(\SHC_{k,\ell})=\th_{k-1}^{}(\ham_{k,\ell})$.
This can be proved by carrying out the same argument that was used to determine $\th_{k-1}^{}(\ham_{k,\ell})$~\cite{KMO10,RRS09a}.

\begin{theorem}\label{thm:hamilton-connctedness-codegree}
	We have $\th_{k-1}^{}(\SHC_{k,\ell}) = \th_{k-1}^{}(\ham_{k,\ell})$ for $2 \leq \ell \leq k-1$.
\end{theorem}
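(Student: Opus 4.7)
The plan is to establish $\th_{k-1}(\SHC_{k,\ell}) = \th_{k-1}(\ham_{k,\ell})$ by proving both inequalities, in both cases adapting the arguments of~\cite{KMO10,RRS09a}. The easier direction, $\th_{k-1}(\SHC_{k,\ell}) \geq \th_{k-1}(\ham_{k,\ell})$, follows by a short closing argument. Given an $n$-vertex $k$-graph $G$ with $(k-\ell) \mid n$ and $\delta_{k-1}(G) \geq (\th_{k-1}(\SHC_{k,\ell}) + \mu)(n-k+1)$, pick an edge $e_0$ and delete a subset $W \subseteq e_0$ of size $\equiv -\ell \pmod{k-\ell}$ (concretely, the $k-2\ell$ middle vertices of $e_0$ when $\ell \leq k/2$, and an analogous cut along two disjoint edges when $\ell > k/2$) so that $n - |W| \equiv \ell \pmod{k-\ell}$ and the surviving parts of $e_0$ form two disjoint $\ell$-tuples $A, B \in \partial_\ell(G - W)$. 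Since removing $O_k(1)$ vertices costs only $o(1)$ in the codegree, $G - W$ is strongly Hamilton $\ell$-connected, and any $A$-to-$B$ Hamilton $\ell$-path in $G - W$ closes through the deleted edge(s) to a Hamilton $\ell$-cycle of $G$.

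The substantial direction is $\th_{k-1}(\SHC_{k,\ell}) \leq \th_{k-1}(\ham_{k,\ell})$. Take $G$ with $n \equiv \ell \pmod{k-\ell}$, codegree at least $(\th_{k-1}(\ham_{k,\ell}) + \mu)(n-k+1)$, and any two disjoint $\ell$-tuples $A, B \in \partial_\ell(G)$. I would run the absorbing-path strategy of \cite{KMO10,RRS09a} with prescribed endpoints. First, greedily extend $A$ and $B$ to short $\ell$-paths $P_A, P_B$ of bounded length whose free ends $A', B'$ are \emph{typical}, meaning $\ell$-tuples whose codegree with almost every $(k-\ell-1)$-subset of $V(G)$ is close to $\delta_{k-1}(G)$. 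Second, inside $G - V(P_A \cup P_B)$ construct an absorbing $\ell$-path $P_{\mathrm{abs}}$ between two further typical $\ell$-tuples which can absorb any sublinear vertex set, exactly as in the Hamilton cycle proof. Third, apply the reservoir and covering lemmas from \cite{KMO10,RRS09a} to join the four free ends via an almost-spanning $\ell$-path. Finally, use $P_{\mathrm{abs}}$ to swallow the $o(n)$ leftover vertices, producing a Hamilton $\ell$-path from $A$ to $B$.

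The main obstacle is confirming that each ingredient of the original Hamilton cycle proof continues to function when two endpoints are pinned. The absorber, reservoir, and near-cover constructions already output paths whose ends may be chosen to be typical $\ell$-tuples, so they transfer without change. The only genuinely new ingredient is the connecting step: extending an arbitrary prescribed $\ell$-tuple in $\partial_\ell(G)$ to a short $\ell$-path whose free end is typical. Under a codegree hypothesis this is routine, since a counting argument shows that a constant fraction of the edges through any fixed $\ell$-tuple extend into typical $\ell$-tuples, so an $O_k(1)$-step greedy extension succeeds and uses only $O_k(1)$ vertices, preserving the codegree condition on the rest of $G$ and allowing the remainder of the proof to proceed verbatim.
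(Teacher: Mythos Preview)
The paper does not give a proof of this theorem; it states that the result follows by ``carrying out the same argument that was used to determine $\th_{k-1}^{}(\ham_{k,\ell})$'' in~\cite{KMO10,RRS09a} and defers a formal proof to forthcoming work~\cite{LS23a}. Your proposal---rerunning the absorbing-path machinery of those papers with prescribed endpoints, plus a short closing argument for the reverse inequality---is exactly this approach, so it matches the paper's indicated route.
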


In the case of loose cycles, we can obtain the same for all degree types.
This was recently proved in~\cite[Theorem 9.3]{AKL+23}.

\begin{theorem}\label{thm:hamilton-connctedness-loose}
	We have $\th_d^{}(\SHC_{k,1}) = \th_d^{}(\ham_{k,1})$ for  $1\leq d \leq k-1$.
\end{theorem}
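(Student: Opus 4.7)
The theorem asserts that strong Hamilton $1$-connectedness and the existence of a loose Hamilton cycle have the same asymptotic minimum $d$-degree threshold. The plan is to establish both inclusions, the nontrivial one being the upper bound.

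The direction $\th_d^{}(\SHC_{k,1}) \geq \th_d^{}(\ham_{k,1})$ can be obtained by packaging an extremal Hamilton-loose-cycle obstruction into a strong-connectedness obstruction. Starting from a graph $G$ on $n$ vertices with $(k-1) \mid n$, $\delta_d(G)$ just below the $\ham_{k,1}$ threshold, and no Hamilton loose cycle, one builds $G^\star$ on $n+1$ vertices by adjoining a single vertex $x$ and plugging it into a controlled family of edges so as to preserve the degree profile asymptotically while fixing $n+1 \equiv 1 \pmod{k-1}$. A Hamilton loose path in $G^\star$ with one end at $\{x\}$ and the other end at a suitably chosen singleton would, after pinching $x$, induce a Hamilton loose cycle of $G$, contradicting the extremality of $G$. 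Hence $G^\star$ is not strongly Hamilton $1$-connected.

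The main direction is $\th_d^{}(\SHC_{k,1}) \leq \th_d^{}(\ham_{k,1})$, which I would prove by absorption. Fix $G$ on $n \equiv 1 \pmod{k-1}$ vertices with $\delta_d(G) \geq (\th_d^{}(\ham_{k,1}) + \mu)\binom{n-d}{k-d}$ and two disjoint singletons $\{v\}, \{w\} \in \partial_1(G)$. First, using a random sampling argument enabled by the degree condition, reserve a small vertex set $R \subseteq V(G)\setminus\{v,w\}$ of size $o(n)$ housing an absorbing loose path $A$ whose endpoints can swallow any sufficiently small leftover set. Second, since $v, w \in \partial_1(G)$, extend each of them into a short constant-length loose path $P_v$ starting at $v$ and $P_w$ ending at $w$, disjoint from $R$ and from one another, with free endpoints $v'$ and $w'$. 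Third, consider the residual graph $G^\star := G \setminus V(P_v \cup P_w)$; it still satisfies $\delta_d(G^\star) \geq (\th_d^{}(\ham_{k,1}) + \mu/2)\binom{|V(G^\star)|-d}{k-d}$. Apply the Hamilton loose cycle existence result to an appropriate modification of $G^\star$, and use the absorbers in $A$ to mop up any vertices left over. This produces a Hamilton loose path from $v'$ to $w'$ in $V(G) \setminus V(P_v \cup P_w)$; concatenating it with $P_v$ and $P_w$ gives the desired Hamilton loose path from $v$ to $w$.

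The principal obstacle lies in the third step: the Hamilton loose cycle theorem delivers a cycle, whereas we need a path with prescribed endpoints. The standard resolution is to embed $v'$ and $w'$ (or a small rigid gadget attached to them) into the absorbing structure $A$, so that the Hamilton loose cycle must traverse a designated $A$-segment between them, and this segment can be excised to yield a Hamilton loose path with the correct ends. A secondary technical subtlety is the parity mismatch between $n \equiv 1 \pmod{k-1}$ required for paths and $(k-1) \mid |V|$ required for cycles; this is resolved by calibrating the lengths of $P_v$ and $P_w$ so that $|V(G^\star)|$ sits in the correct residue class modulo $k-1$. Both subtleties are handled in the proof of \cite[Theorem 9.3]{AKL+23}, which I would follow.
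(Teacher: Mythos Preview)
The paper does not prove this theorem at all: immediately before the statement it writes ``This was recently proved in~\cite[Theorem 9.3]{AKL+23}'', and then uses the result as a black box. You correctly identify that same reference as the source you would follow, so in that sense your proposal is aligned with the paper.

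That said, your sketch has two imprecisions worth flagging. For the direction $\th_d(\SHC_{k,1}) \geq \th_d(\ham_{k,1})$, your ``add a vertex $x$ and pinch'' argument does not close up as stated: a Hamilton loose path in $G^\star$ starting at $x$ begins with an edge $\{x,u_2,\dots,u_k\}$, and deleting $x$ (and hence that edge) strands the vertices $u_2,\dots,u_{k-1}$ rather than producing a Hamilton loose cycle in $G$. One instead argues either via the explicit extremal constructions for $\ham_{k,1}$ (which already fail strong connectedness for the same structural reason), or by a more careful gadget. For the main direction, your Step~3 is muddled: applying the Hamilton loose \emph{cycle} theorem to $G^\star$ as a black box leaves no leftover to absorb and no control over where the cycle passes, so the absorber cannot be used as you describe. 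The standard route (and the one in \cite{AKL+23}) is to build the absorber and reservoir first, cover almost everything by a long loose path using a connecting/covering lemma, and then absorb the leftover while forcing the endpoints through $v'$ and $w'$; the cycle theorem itself is not invoked black-box.
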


Given this, we are ready to show the main result of this section.
For a $k$-graph $G$, we denote by $\ori{C}(G)$ the $k$-digraph on $V(G)$ obtained by adding all possible orientations of every edge of $G$.

\begin{proof}[Proof of \cref{thm:loose-hamilton-cycles-codegree,thm:loose-hamilton-cycles-l=1}]
	Let $1\leq d, \ell \leq k-1$ and $\mu >0$.
	The hypothesis of \cref{thm:loose-hamilton-cycles-codegree,thm:loose-hamilton-cycles-l=1} are captured by the restrictions that either $d = k-1$ or $\ell = 1$ holds.
	Let $\delta = \th_{d}^{}(\ham_{k,\ell})$.
	
	Note that $k$-uniform $\ell$-cycles can be described as chains as follows.
	Let $r = k - \ell$, and let $L$ be the $(k,\ell,r)$-link on vertex set $\{1,\dots,k\}$ consisting of a single edge $(1,\dots,k)$.
	Note that a Hamilton $\ell$-cycle in a $k$-graph $G$ corresponds to a closed Hamilton $L$-chain in the $(k,\ell)$-digraph $H = \ori{C}(G) \cup \ori{C}(\partial_\ell G)$.
	Next, define $d^\ast = 1/(k-\ell)$ and $\lambda = \ell / (k-\ell)$.
	Let $A$ be an $n$-vertex $L$-chain.
	We claim that $A$ is $(d^\ast,\lambda)$-balanced.
	To this end, consider a subgraph $I$ in $A$ on $v$ vertices and $e > 0$ edges.
	Clearly, $I$ has at most $d^\ast v$ edges.
	Moreover, if $v \leq  {n}/{2v(L)}$, then $I$ cannot `wrap around'.
	Thus $v \geq (k - \ell)e + \ell$ (the $\ell$-path achieves equality), and this is the same as $e \leq (v - \ell)/(k - \ell)$.
	So in this case, we even have $e \leq d^\ast v - \lambda$.
	This shows that $A$ is indeed $(d^\ast,\lambda)$-balanced.
	
	Now we fix the required constants.
	Set $q=2\ell+1$.
	With $k, \mu, q, \delta$ as input,
	\cref{lem:inheritance-minimum-degree} yields $s_0$.
	\cref{thm:hamilton-connctedness-codegree,thm:hamilton-connctedness-loose} together imply that there exists $s'_0 \geq s_0$ sufficiently large such that for all $s \geq s'_0$ with  $s \equiv \ell \bmod k-\ell$,
	all $k$-graphs $G$ with $\delta_d(G) \geq (\delta + \mu/2) \binom{s-d}{k-d}$ are strongly Hamilton $\ell$-connected
	and $1-e^{-\sqrt{s}}> 1-s^{-2}$.
	Fix any $s_1 \geq \max\{ s'_0, 5k \}$ with $s_1 \equiv \ell \bmod k-\ell$.
	For the parameters $r, \ell, k, s_1$, \cref{thm:main} outputs $C > 0$.
	Choose $n$ divisible by $k - \ell$ and sufficiently large in terms of all previous parameters.
	
	Let $G$ be an $n$-vertex $k$-graph with $\delta_d(G)\geq (\delta+\mu) \binom{n-d}{k-d}$,
	and let $H = \ori{C}(G) \cup \ori{C}(\partial_\ell G)$.
	To apply \Cref{thm:main}, we will check that $\PG{H}{\HC_L}{s}$ satisfies the required degree conditions, for all $s_1 \leq s \leq 4 s_1$ with $s \equiv \ell \bmod k-\ell$.
	
	Indeed, fix such an $s$ and let $P = \PG{H}{\HC_L}{s}$.
	We claim that $\delta_{q}(P) \geq (1 - s^{-2}) \binom{n-q}{s - q}$.
	To see this, let $Q=\PG{G}{\DegF{d}{\delta+\mu/2}}{s}$, so $Q$ is the property graph with an $s$-edge $S \subset V(G)$ whenever $\delta_{d}(G[S]) \geq (\delta + \mu/2) \binom{s-d}{k-d}$.
	By \cref{lem:inheritance-minimum-degree}, we have $\delta_{q}(Q) \geq (1-e^{-\sqrt{s}}) \binom{s-d}{k-d} \geq (1-s^{-2}) \binom{s-d}{k-d}$.
	Moreover, by our choice of constants,
	the induced $k$-graph $G[S]$ is strongly Hamilton $\ell$-connected.
	Since a Hamilton $\ell$-path in $G[S]$ corresponds to a Hamilton $L$-chain in $H[S]$, it follows that $H[S]$ satisfies $\HC_L$.
	Thus $Q \subseteq P$, and therefore $\delta_q(P) \geq \delta_q(Q)$, from which we obtain the desired inequality.
	
	Now we are ready to apply \cref{thm:main}.
	Our goal, in each considered case, is to show that $H' \sim H_p^{(k)}$ contains a closed Hamilton $L$-chain with high probability.
	In the case $\ell = 1$, $L$-chains are $(d^\ast, \lambda)$-balanced with $d^\ast = \lambda = 1/(k-1)$.
	Thus we set $p = C n^{-1/d^\ast} \log n = C n^{-k+1}\log n$ and apply \cref{thm:main}\ref{item:main-general} to deduce that $H' \sim H_p^{(k)}$ contains a closed Hamilton $L$-chain with high probability.
	On the other hand, in the case $\ell \geq  2$, we have that $L$-chains are $(d^\ast, \lambda)$-balanced with $d^\ast = 1/(k-\ell) < \ell/(k-\ell) = \lambda$.
	Thus we set $p = \omega(n^{-1/d^\ast}) = \omega(n^{-k+\ell})$ and apply \cref{thm:main}\ref{item:main-balanced} to deduce that $H' \sim H_p^{(k)}$ contains a closed Hamilton $L$-chain with high probability.
	
	When then finish with the same coupling argument as in the proof of \cref{thm:main-simple} (see also \cref{rem:coupling}), to find that $G' \sim G_{p'}$, for $p'$ within a constant factor of $p$, contains a Hamilton $\ell$-cycle with high probability, as desired.
\end{proof}

\subsection{From Hamilton frameworks to connectedness}
\label{sec:frameworks}
We begin by describing $(t-k+1)$th powers of tight cycles as $L$-chains for $2 \leq k \leq t$.
Let $L$ be the $(k,t-1,1)$-link on vertex set $\{1,\dots,t\}$ with an edge $(1,i_1,\dots,i_{k-1})$ for every choice of pairwise distinct $2 \leq  i_1 < \dots < i_{k-1} \leq t$.
Note that the $(t-k+1)$th power of a tight Hamilton cycle in $G$ corresponds to a closed Hamilton $L$-chain.
We denote $\HamConn_{k,t} = \HamConn_L$ for this choice of $L$.

We derive the following general theorem about (powers of) tight Hamilton cycles from our main result.te
For a family of $k$-graphs $\cP$ and $\delta=1-1/s^2$, a $k$-graph $G$ \emph{$(q,s)$-robustly satisfies} $\cP$ if the minimum $q$-degree of the property $s$-graph $\PG{G}{  \cP   }{s}$ is at least $\delta \tbinom{n-q}{s-q}$.

\begin{theorem}\label{thm:framework-connectedness}
	For each $2 \leq k \leq t$ with $3 \leq t$, $d = \binom{t-1}{k-1}$, there is an $s_0$ such the that following holds for $s\geq s_0$ and $p = \omega(1/d)$.
	Let $\cP$ be a family of $s$-vertex $k$-graphs such that the family $\{K_t(R)\colon R \in \cP\}$ admits a Hamilton framework.
	Let $G$ be a $k$-graph on $n$ vertices that satisfies $(q,s)$-robustly $\cP$.
	Then $G' \sim \RS{G}{p}{(k)}$ contains the $(t-k+1)$st power of a tight Hamilton cycle with high probability.
\end{theorem}

For the proof, we require a result by Lang and Sanhueza-Matamala~\cite[Theorem 5.8]{LS24a}, which translates robust Hamilton frameworks into robust connectedness.

\begin{theorem}\label{thm:framework-connectedness-robust}
	Let $1/t,1/q \geq {1/s_1} \gg 1/s_2 \geq 1/s_3 \gg 1/n$.
	Let $\cP$ be a family of $s_1$-vertex $t$-graphs that admits a Hamilton framework.
	Let $K$ be a $t$-graph on $n$ vertices that satisfies $(2t,s_1)$-robustly $\cP$.
	Then there is an $n$-vertex $(t,t-1)$-digraph $K' \subset \ori{C}(K) \cup \ori{C}(\partial K)$ that satisfies $(q,s)$-robustly $\HamConn_{t,t}$ for every $s_2 \leq s \leq s_3$.
\end{theorem}

\begin{proof}[Proof of \cref{thm:framework-connectedness}]
	Introduce $s_1, s_2,s_3$ with $1/t \geq {1/s_1} \gg 1/s_2 \gg 1/s_3 \gg 1/n$ as in \cref{thm:framework-connectedness-robust}.
	Set $\ell = t-1$, $r=1$ and $q= 2\ell+1$.
	Let $\cP$ be a family of $s$-vertex $k$-graphs such that the family $\cQ=\{K_t(G)\colon G \in \cP\}$ admits a Hamilton framework.
	Let $G$ be a $k$-graph on $n$ vertices that satisfies $(q,s_1)$-robustly $\cP$.
	Note that $K = K_t(G)$ satisfies $(q,s_1)$-robustly $\cQ$.
	By \cref{thm:framework-connectedness-robust} there is an $n$-vertex $(t,t-1)$-digraph $K' \subset \ori{C}(K) \cup \ori{C}(\partial K)$ that satisfies $(q,s)$-robustly $\HamConn_{t,t}$ for every $s_2 \leq s \leq 4s_2$.
	From this we obtain an $n$-vertex $(k,t-1)$-digraph $H \subset \ori{C}(G) \cup \ori{G}(\partial G)$ that satisfies $(q,s)$-robustly $\HamConn_{k,t}$ for every $s_2 \geq s \geq 4s_2$.
	(For every $t$-tuple $X$ in $K'$, add to $H$ every $t$-tuple $Y$ that can be obtained by removing $t-k$ entries from $X$.)
	
	Let $L$ be the $(k,\ell,r)$-link on vertex set $\{1,\dots,t\}$ with an edge $(1,i_1,\dots,i_{k-1})$ for every choice of pairwise distinct $2 \leq  i_1 < \dots < i_{k-1} \leq t$.
	Let $d = \binom{t-1}{k-1}$ and $\lambda = d+1$.
	Let $A$ be an $L$-chain on $n$ vertices.
	We claim that $A$ is $(d,\lambda)$-balanced.
	To this end, consider a subgraph $I$ in $A$ on $v$ vertices and $e > 0$ edges.
	Clearly, $I$ has at most $dv$ edges.
	Now suppose that $v \leq  {n}/{2v(L)}$.
	So in particular, $I$ cannot `wrap around'.
	It follows that $I$ is missing all edges of one copy of $L$, and, because $t\geq 3$, at least one edge of another copy of $L$.
	This gives $e \leq d v - \lambda$.
	This shows that $A$ is indeed $(d,\lambda)$-balanced.

	Now set $p = \omega(n^{-1/d})$, and apply \cref{thm:main}\ref{item:main-balanced} to deduce that $H' \sim H_p^{(k)}$ contains a closed Hamilton $L$-chain with high probability.
	When then finish with the same coupling argument as in the proof of \cref{thm:main-simple} (see also \cref{rem:coupling}), to find that $G' \sim G_{p'}$, for $p'$ within a constant factor of $p$, contains a Hamilton $\ell$-cycle with high probability, as desired.
\end{proof}

We are now ready to derive \cref{thm:tight-hamilton-cycles-d-degree-framework}.

\begin{proof}[Proof of \cref{thm:tight-hamilton-cycles-d-degree-framework}]
	Set $\delta = \delta_d^{\hf}(k,t)$.
	Given $\eps > 0$, set $q=2k$, and choose $s,c$ with $1/q, \eps \gg 1/s \gg c \gg 1/n$.
	Let $G$ be a $k$-graph on $n$ vertices with $\delta_d(G) \geq (\delta + \eps) \binom{n}{2}$.
	By \cref{lem:inheritance-minimum-degree} the property $s$-graph $P =\PG{G}{\DegF{d}{\delta+\eps/2}}{s}$ satisfies $\delta_{q}(P) \geq  (1-1/s^2)  \tbinom{n-q}{s-q}$.
	Let $\cP$ be the family of $s$-vertex $t$-graphs $K_t(R)$ with $R \in \DegF{d}{\delta+\eps/2}$.
	By definition of $\delta$, the family $\cP$ admits a Hamilton framework.
	Thus we can finish by applying \cref{thm:framework-connectedness}.
\end{proof}

Lastly, we come to the proof of \cref{thm:power-hamiton-cycles-degree-sequence}.
We define the graph class $\DegSeq_{t,\mu}$ as all $n$-vertex graphs~$G$ with degree sequences $d_1,\dots,d_n$ such that $d_i > (t-2)n/t + i + \mu n$ for every $i \leq n/t$.
It was recently established that large enough graphs in $\DegSeq_{t,\mu}$ contain $(t-1)$-powers of Hamilton cycles~\cite{LS23,ST17}.
(And this is sharp as constructions show.)
Using standard probabilistic tools, it is not hard to see that $\DegSeq$ is approximately closed under taking small induced subgraphs~\cite[Lemma 12.6]{LS24a}.

\begin{lemma}[Degree sequence inheritance] \label{lem:inheritance-degree-sequence}
	Let $1/t,\eps \gg 1/s \gg 1/n$.
	Suppose $G$ is an $n$-vertex graph in $\DegSeq_{t,\eps}$.
	Then $P=\PG{G}{\DegSeq_{t,\eps/2}}{s}$ satisfies $\delta_{2k}(P) \geq \left(1- e^{-\sqrt{s}} \right)  \tbinom{n-2t}{s-2t}$.
\end{lemma}

Moreover, it follows from prior arguments~\cite{LS23,Tre15} that the family $\DegSeq$ admits a Hamilton framework~\cite[Corollary 12.8]{LS24a}.

\begin{proposition}\label{pro:degree-sequence-hamilton-framework}
	Given $t\geq 2$ and $\eps >0$, let $n$ be sufficiently large.
	Then the family of $t$-graphs $K=K_t(G)$ on $n \geq n_0$ vertices with $G \in \DegSeq_{t,\eps}$ admits a Hamilton framework $F$ with $F(K) = K$.
\end{proposition}

Given these two results, the proof of \cref{thm:power-hamiton-cycles-degree-sequence} can be derived from \cref{thm:framework-connectedness} along the same lines as \cref{thm:power-hamiton-cycles-minimum-degree}.
We omit the details.

\section{Proof of the main result}
\label{sec:proof-main-result}

In the following, we formulate four intermediate results (\cref{lem:typical-to-correct-distribution,lemma:fromcorrecttospread,theorem:FKNP,lemma:fromstrongspreadtothreshold}) that together combine to \cref{thm:main}.
The proofs of \cref{lem:typical-to-correct-distribution,lemma:fromcorrecttospread,lemma:fromstrongspreadtothreshold} follow in \cref{sec:correct-to-spread-proof,sec:typical-to-correct-distribution-proof,section:strongspread-to-threshold}.

\subsection{Correct distributions}
The first lemma tells us that,  under  the assumptions of \cref{thm:main}, there is a probability distribution over Hamilton $L$-chains in $G$ which satisfies the `correct' properties for our purposes.

The \emph{$2$-shadow} $\partial_2 H$ of a (non-directed) hypergraph $H$ is the graph on vertex set $V(H)$ where $uv$ forms an edge if there exists an edge $e$ in $H$ such that $\{u,v\} \subseteq e$.
A subgraph $H' \subseteq H$ is a \emph{component} of $H$
if $\partial_2 H'$ forms a non-empty component in $\partial_2 H$, and $H'$ is edge-maximal with that property.
Thus the number of components of $H$ is the number of components of $\partial_2H$ that contain at least one edge, and in particular, an edgeless hypergraph $H$ has zero components.
For digraphs we make the same definitions by applying them to their corresponding undirected hypergraph (ignoring the orientation of every edge).

Let $L$ be a $(k, \ell, r)$-link.
For a positive integer $n$, let $\cC(n,L)$ be the set of all closed $L$-chains with vertex set $[n]$.
Let $\mu$ be a probability measure on $\cC(n,L)$ and let $C\in \cC(n,L)$ be chosen according to $\mu$.
For $K>0$,
we say that $\mu$ is
\emph{$K$-correct} if \[ \probability[I\subseteq C] \leq K^{v(I)}n^{c-v(I)}\] for all directed hypergraphs $I$ with vertex set in $[n]$
and $c$ components.
Finally, a $k$-digraph $G$ with vertex set $[n]$ \emph{supports} $\mu$ if $\mu(C')=0$ for all $C'$ that are not a subgraph of $G$.

Now we are ready to state the first lemma for the proof of \cref{lem:typical-to-correct-distribution}.
Its proof is deferred to \cref{sec:typical-to-correct-distribution-proof}.

\begin{lemma}[Correct distribution] \label{lem:typical-to-correct-distribution}
	For every $r, \ell, k, s_1 \geq 0$ with $s_1\geq 5(r+\ell)$ and $q=2 \ell + 1$, there is $K>0$ such that the following holds for all sufficiently large $n$ which are divisible by~$r$.
	Let $L$ be a $(k, \ell, r)$-link and let $G$ be an $n$-vertex $(k, \ell)$-digraph.
	Suppose that for every $s_1 \leq s \leq 4 s_1$ with $s \equiv \ell \bmod r$, the $s$-graph $P = \PG{G}{\HamConn_L}{s}$ satisfies $\delta_{q}(P) \geq (1 - s^{-2}) \binom{n-q}{s - q}$.
	Then there exists a $K$-correct distribution $\mu$ on $\mathcal{C}(n, L)$ that is supported by $G^{(k)}$.
\end{lemma}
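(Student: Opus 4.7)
I would construct $\mu$ via a two-stage random procedure producing a closed Hamilton $L$-chain in $G^{(k)}$, along the outline sketched in \cref{sec:framework}. Pick scales $s_a, s_b \in [s_1, 4s_1]$ with $s_a, s_b \equiv \ell \bmod r$, and set $P_a := \PG{G}{\HamConn_L}{s_a}$ and $P_b := \PG{G}{\HamConn_L}{s_b}$. In the first stage, randomly select a partial matching in $P_a$ covering about half of $V(G)$ by disjoint $s_a$-sets, each inducing an $\HamConn_L$-subgraph of $G$. Within each such $s_a$-set, pick a Hamilton $L$-chain between canonically chosen $\ell$-endpoints; this yields disjoint short $L$-chains $C_1, \dots, C_m$, cyclically indexed. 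In the second stage, for each consecutive pair $(C_i, C_{i+1})$, sample an $s_b$-set $T_i$ containing the terminal $\ell$-tuple of $C_i$, the initial $\ell$-tuple of $C_{i+1}$, and otherwise uncovered vertices, chosen so that $G[T_i] \in \HamConn_L$; inside $T_i$ take a Hamilton $L$-chain joining the two prescribed $\ell$-tuples, which exists by the definition of $\HamConn_L$. Concatenating the $C_i$ with the connector chains yields a closed Hamilton $L$-chain on $V(G)$ contained in $G^{(k)}$.

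\textbf{Spread matchings in the property graphs.}
To sample the $s_a$-tiling of the first stage, I would invoke the now-standard result that $s$-graphs with sufficiently high minimum $1$-degree admit a spread distribution on perfect matchings (Kang, Kelly, K{\"u}hn, Osthus and Pfenninger \cite{KKK+22}; Pham, Sah, Sawhney and Simkin \cite{PSS+22}; Kelly, M{\"u}yesser and Pokrovskiy \cite{KMP23}). The requisite $1$-degree in the relevant induced subgraph of $P_a$ follows from the hypothesis $\delta_{q}(P_a) \geq (1 - s_a^{-2}) \binom{n-q}{s_a - q}$. For the connector step, the sampling of $T_i$ must be conditioned on containing two prescribed disjoint $\ell$-tuples. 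This is precisely where the hypothesis $q = 2\ell + 1$ is tailored: after fixing the $2\ell$ vertices of the interface tuples plus one auxiliary vertex, the corresponding link in $P_b$ is still essentially complete, so a spread distribution on the remaining $s_b - 2\ell - 1$ vertices of $T_i$ inside the uncovered vertex set exists by the same matching results.

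\textbf{Verifying $K$-correctness.}
Given a directed hypergraph $I$ with $v(I)$ vertices and $c$ components, I would decompose $I$ according to the pieces (short chain $C_i$ or connector $T_i$) in which its vertices sit. Each piece of $I$ on $v'$ vertices has conditional probability $O(n^{-v'})$ of being contained in the corresponding piece of $C$, by the spread property of the underlying matching distribution together with the trivial bound that Hamilton $L$-chains on a fixed constant-size vertex set are $O(1)$ in number. Each component of $I$ then earns back one factor of $n$ from the choice of its position in the cyclic ordering of pieces, producing the $n^{c - v(I)}$ factor of the $K$-correctness bound. The multiplicative losses, depending only on $r, \ell, k, s_1$, are absorbed into $K^{v(I)}$ for a suitable $K = K(r, \ell, k, s_1)$.

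\textbf{Main obstacle.}
The main technical challenge will be the careful bookkeeping for subgraphs $I$ that bridge several pieces, where the positional $n$-factor per component of $I$ must be distributed across the spread estimates coming from the two stages and from the overlaps on interface $\ell$-tuples. A clean way to organise this is to factor the argument through an abstract reduction (cf.\ \cref{lem:spread-property-matching-to-correct-distribution}) that converts spread property-graph matchings into correct $L$-chain distributions, after which the present lemma follows from applying the matching spread results of \cite{KKK+22, PSS+22, KMP23} to $P_a$ and $P_b$.
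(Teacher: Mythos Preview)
Your high-level construction matches the paper's: both tile roughly half of $V(G)$ with short $L$-chains via a spread matching in one property graph, then connect consecutive chains through a second spread matching in a property graph conditioned on the $2\ell$ interface vertices, with the hypothesis $q = 2\ell+1$ ensuring the relevant links are still nearly complete.

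The genuine gap is in the correctness verification, and it is not merely bookkeeping. Your heuristic ``each component of $I$ earns back one factor of $n$ from its position in the cyclic ordering of pieces'' hides the entire difficulty. A single component $J$ of $I$ can split into many minicomponents distributed across first-stage chains and connector pieces, and these are heavily constrained: if two $V_1$-minicomponents of $J$ are linked through a $V_2$-minicomponent, they must lie in \emph{consecutive} first-stage chains (the paper's ``adjacency obstacle''), and the global pattern of such constraints must embed into a Hamilton cycle on the set of first-stage chains (the ``cyclic obstacle''). Crucially, the paper makes the cyclic ordering of the first-stage chains itself a \emph{uniformly random permutation}, a step absent from your outline, and uses the spread of Hamilton cycles in $K_m$ (\cref{lemma:spreadofhamcycles}) to pay for the consistency constraints. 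The analysis then runs through auxiliary graphs $\Gamma$ and $\Xi$ encoding these obstacles, bounds the number of valid allocations of minicomponents (Claims in the paper on valid partitions), and separately estimates the probabilities of three events $\Eval$, $\Econ$, $\Eanc$. The final exponent $c-v$ emerges only after splitting $c = c_1^\ast + c_2^\ast$ into non-floating and floating components and $v = v_1 + v_2$; the $n^{c_1^\ast - v_1}$ factor comes from the first matching \emph{together with} the random permutation, not from the matching alone. Your proposed reduction to \cref{lem:spread-property-matching-to-correct-distribution} handles a single matching with no cyclic linkage, so it cannot by itself produce the component-level $n$-factor for components bridging the two stages.
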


\subsection{From correct to spread distributions}

Given the assumptions of \cref{thm:main}, we can apply Lemma~\ref{lem:typical-to-correct-distribution} to obtain a correct distribution.
In our next step towards proving \cref{thm:main}, we show that correct distributions meet the `spreadness' conditions that are required to apply the work of Frankston, Kahn, Narayanan and Park~\cite{FKN+21} as well as Espuny Díaz and Person~\cite{EP23}.
We remark that in the cases considered in \cref{thm:main}\ref{item:main-balanced}, we need a stronger `spreadness' property, which  allows us to remove the logarithmic factor from the probability threshold in the last step of the proof of \cref{thm:main}.

Suppose that $\mathcal{H}$ is a family of subsets of a finite ground set $Z$.
A probability distribution $\mu$ supported on $\mathcal{H}$ is \emph{$q$-spread} if by choosing $C \in \mathcal{H}$ randomly according to $\mu$, we have, for any $I \subseteq Z$, that
\[ \probability[ I \subseteq C ] \leq q^{|I|}.\]

{We also say that $\mu$ is \emph{$(q,a,b)$-strongly-spread}
if $\mu$ is $q$-spread and if for all
$S$ contained in some set of $\cH$
and all $a|S| \leq j \leq |S|$, we have
\begin{align*}
	\sum_{I\subseteq S,\, |I|=j} \probability[ I \subseteq C ] \leq (bq)^{j},
\end{align*}
where again $C$ is chosen at random according to $\mu$.}

Note that \cref{lem:typical-to-correct-distribution} returns a distribution $\mu$ supported on $\cC(n, L)$.
We translate this into a spread distribution via the following auxiliary hypergraphs.
Let $\cH(n,L)$ be the hypergraph with vertex $Z = [n]^k$ and edge set $\{E(C)\colon C\in \cC(n,L)\}$.
Let $\cH(n,L,\mu) \subset\cH(n,L)$ contain the edges $E(C)$ of $\cH(n,L)$ such that $\mu(C) > 0$.
Thus $\mu$ corresponds naturally to a distribution $\mu'$ over $\cH(n,L,\mu)$ with ground set $Z = [n]^k$.
We say that $\mu$ on $\cC(n, L)$ is \emph{$q$-spread} if the distribution $\mu'$ on $\cH(n,L,\mu)$ is $q$-spread.
We define \emph{$(q, a, b)$-strongly-spread} distributions $\mu$ on $\cC(n, L)$ analogously.

The next lemma states that correct distributions of appropriately balanced closed chains are spread.
Its proof is given in \cref{sec:correct-to-spread-proof}.

\begin{lemma}[Spread distribution] \label{lemma:fromcorrecttospread}
	For every $r, \ell, k, K \geq 0$, there is $K'>0$ such that the following holds for all sufficiently large $n$.
	Let $d,\lambda>0$ and let $L$ be a $(k, \ell, r)$-link such that sufficiently large closed $L$-chains are $(d, \lambda)$-balanced.
	Suppose $\mu$ is a $K$-correct probability distribution on $\cC(n,L)$.
	Then we have the following.
	\begin{enumerate}[\upshape{(\roman*)}]
		\item \label{item:correcttospread-superspread} If $\lambda=d$,
		      then $\mu$ is $K'n^{-1/d}$-spread.
		\item \label{item:correcttospread-strongspread} If $\lambda > d$,
		      then $\mu$ is $(K'n^{-1/d},(e(L)n/r)^{-(\lambda/d - 1)/2}, K')$-strongly-spread.
	\end{enumerate}
\end{lemma}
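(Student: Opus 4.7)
The plan is to deduce the required spread estimates from the pointwise correctness bound $\Pr[I\subseteq C]\leq K^{v(I)}n^{c(I)-v(I)}$ by exploiting the $(d,\lambda)$-balanced structure of chains. The key reduction is a master inequality: for any sub-digraph $I$ of a chain in the support, partition its connected components into \emph{small} ones (at most $n/(2v(L))$ vertices) and \emph{large} ones; applying (B1) to every component and (B2) to each small component gives $e\leq dv-\lambda|S|$, where $|S|$ is the number of small components, while $|B|\leq 2v(L)$ is forced by $v\leq n$. Writing $\alpha:=\lambda/d-1\geq 0$ and using $v\leq ke$ together with correctness, this rearranges to
\[
\Pr[I\subseteq C]\leq (K^{k}n^{-1/d})^{e}\cdot n^{|B|-\alpha|S|}. \qquad(*)
\]

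For part (i), $\alpha=0$: when $|B|=0$ the bound $(*)$ immediately yields $q$-spreadness with $q=K^{k}n^{-1/d}$, and when $|B|\geq 1$ a large component forces $e\geq v/k\geq n/(2kv(L))$, so choosing $K'$ a sufficiently large constant multiple of $K^{k}$ ensures that the exponential gain $(K'/K^{k})^{e}$ absorbs the polynomial penalty $n^{|B|}\leq n^{2v(L)}$. For part (ii), $\alpha>0$ and $q$-spreadness follows from $(*)$ identically using $n^{-\alpha|S|}\leq 1$. The substantive new ingredient is the strong-spread estimate: fix $S_{0}\subseteq E(C_{0})$ for some chain $C_{0}$ in the support, so $|S_{0}|\leq e(L)n/r$, and $j$ with $a|S_{0}|\leq j\leq|S_{0}|$, where $a=(e(L)n/r)^{-\alpha/2}$; then group the sum $\sum_{I\subseteq S_{0},|I|=j}\Pr[I\subseteq C]$ by $c(I)=c$. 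The line graph of $C_{0}$ has degree bounded by a constant $C_{1}=C_{1}(k,\ell,r)$ (each vertex lies in at most $2e(L)$ edges), so a standard tree-counting bound gives that the number of connected sub-digraphs of $S_{0}$ with $j'$ edges is at most $|S_{0}|C_{1}^{j'}$, hence the number of $I\subseteq S_{0}$ with $|I|=j$ and $c(I)=c$ is at most $2^{j}|S_{0}|^{c}C_{1}^{j}/c!$.

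Combining this count with $(*)$ restricted to $|B|=0$ and summing the resulting exponential series in $c$ yields
\[
\sum_{I\subseteq S_{0},\,|I|=j}\Pr[I\subseteq C]\leq (2C_{1}K^{k})^{j}\,n^{-j/d}\exp(|S_{0}|/n^{\alpha}),
\]
with the $|B|\geq 1$ contribution handled as in part (i) (there $j\geq n/(2kv(L))$ is so large that $\binom{|S_{0}|}{j}$ is bounded by a constant to the $j$). The exponent $\alpha/2$ in the definition of $a$ is precisely calibrated so that $j/(|S_{0}|/n^{\alpha})\geq (r/e(L))^{\alpha/2}n^{\alpha/2}\to\infty$; consequently, choosing $K'$ large enough in terms of $K,k,\ell,r$ makes $(K'^{2}/(2C_{1}K^{k}))^{j}$ dominate $\exp(|S_{0}|/n^{\alpha})$, which gives the desired $(K'n^{-1/d},a,K')$-strong-spreadness. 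The main obstacle is this final calibration: combining the structural counting of connected sub-digraphs of a chain, which inevitably produces an $\exp(|S_{0}|/n^{\alpha})$ factor from summing over the component count, with the precise exponent $\alpha/2$ in $a$ so that a linear-in-$j$ exponential gain absorbs the extra factor. All other steps are routine consequences of the correctness and balanced conditions.
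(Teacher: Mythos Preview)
Your proposal is correct and follows essentially the same route as the paper: both arguments combine correctness with the $(d,\lambda)$-balanced bounds on small versus large components to obtain the master inequality $(*)$, absorb the at-most-$2v(L)$ large components into a constant-to-the-$e$ factor, and for the strong-spread sum count sub-digraphs of a chain by edges and component number (the paper states this as a separate lemma, your ``tree-counting bound'') before summing the resulting exponential series in $c$. Two cosmetic remarks: your parenthetical that each vertex lies in at most $2e(L)$ edges of a chain is not correct in general (for tight cycles the degree is $k$), though the conclusion that $C_1$ is bounded in terms of $k,\ell,r$ is all you use; and the paper streamlines your final calibration by first replacing $n^{-\alpha c}$ with $m^{-\alpha c/2}$ (where $m=e(L)n/r$), so that the exponential factor becomes $\exp(m^{-\alpha/2}|S_0|)\leq e^{j}$ directly from $j\geq a|S_0|$, rather than invoking a ratio tending to infinity.
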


\subsection{From spread distributions to thresholds}

Given the assumptions of \cref{thm:main}, we can combine \cref{lem:typical-to-correct-distribution,lemma:fromcorrecttospread} to obtain a suitably spread distribution.
In our final step, we turn this into the desired statement about probability thresholds.

To obtain part~\ref{item:main-general}, we employ the celebrated result of Frankston, Kahn, Narayanan and Park~\cite{FKN+21}.
The following result was essentially proved in \cite[Theorem 1.6]{FKN+21} using the (equivalent) language of `spread multihypergraphs', the version we cite uses spread distributions and appears in the work of Pham, Sah, Sawney and Simkin~\cite[Theorem 1.2]{PSS+22}.
For a finite set $Z$ and $p \in [0,1]$, we denote by $Z(p)$ the binomial distribution on subsets of $Z$ where each vertex is present with probability $p$.

\begin{theorem} \label{theorem:FKNP}
	There exists $C > 0$ such that the following holds.
	Let $Z$ be a non-empty ground set and $\mathcal{H}$ be a non-empty collection of subsets of $Z$.
	Suppose that there is a $q$-spread probability distribution supported on $\mathcal{H}$.
	Then, for $p = \min\{ 1, C q \log |Z| \}$, the set $Z(p)$ contains an element of $\mathcal{H}$ with probability tending to $1$ as $|Z| \to \infty$.
\end{theorem}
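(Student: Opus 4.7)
The plan is to establish the theorem via the now-standard coupling/boosting argument introduced by Alweiss--Lovett--Wu--Zhang in the sunflower context and adapted by Frankston--Kahn--Narayanan--Park to the setting of thresholds for spread distributions. As a preliminary normalisation, I would assume by padding with dummy elements that every $H\in \mathcal{H}$ has the same size $\ell \leq |Z|$, so it suffices to show that $Z(p)$ covers some $H \in \mathcal{H}$ with $p = C q \log \ell$.

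The core of the argument is an iterative "capture" scheme. I would partition a single copy of $Z(p)$ into $T = \lceil \log_2 \ell \rceil + O(1)$ independent rounds, so that $W_1,\dots,W_T$ are independent and each $W_i \sim Z(p/T)$. The goal is to produce, over the course of these rounds, an element $H \in \mathcal{H}$ with $H \subseteq W_1 \cup \dots \cup W_T$. To this end I would maintain a coupled "target" $H_i$ drawn from $\mu$ (conditioned on a certain event) together with a set $U_i \subseteq H_i \cap (W_1 \cup \dots \cup W_i)$ representing the portion of $H_i$ already covered, and show that in typical rounds the uncovered remainder $|H_i \setminus U_i|$ at least halves.

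The key technical step is the \emph{halving lemma}. Given the partial data $(W_1,\dots,W_i, H_i, U_i)$, I would resample $H_{i+1}$ from $\mu$ conditioned on $U_i \subseteq H_{i+1}$ (so $H_{i+1}$ agrees with $H_i$ on the covered part), and argue that the remainder $H_{i+1} \setminus U_i$ is sufficiently "spread" across $Z$. Precisely, for any fixed set $R$ of size $|H_i \setminus U_i|$, Bayes together with the $q$-spreadness of $\mu$ gives $\mathbb{P}[R \subseteq H_{i+1} \mid U_i \subseteq H_{i+1}] \leq q^{|R|} / \mathbb{P}[U_i \subseteq H_{i+1}]$. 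Combining this with an averaging/union-bound argument over subsets of $H_{i+1} \setminus U_i$ of appropriate size, and using that $W_{i+1}$ is a $p/T$-random subset independent of everything so far, one shows that with overwhelming probability at least half of $H_{i+1}\setminus U_i$ lies in $W_{i+1}$. Defining $U_{i+1}$ as this majority portion completes the induction step.

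Iterating $T = O(\log \ell)$ times drives $|H_T \setminus U_T|$ down to zero, giving $H_T \subseteq W_1\cup\dots\cup W_T \subseteq Z(p)$ with high probability. The main obstacle, and the part that requires genuine care, is the halving lemma: one must choose the conditioning and the "bad event" precisely so that (a) the conditional distribution of $H_{i+1}$ is well-defined and controllable via the spread hypothesis even as $U_i$ accumulates over many rounds, and (b) the tail probabilities at each step are strong enough that a union bound over all $T$ rounds still yields failure probability $o(1)$. The choice $p \asymp q \log \ell$ is exactly what balances the geometric decay of the remainder against the logarithmic number of rounds needed.
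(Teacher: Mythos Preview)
The paper does not give its own proof of this statement; it is quoted as a black-box input, citing Frankston--Kahn--Narayanan--Park~\cite{FKN+21} (in the equivalent language of spread multihypergraphs) and the reformulation by Pham--Sah--Sawhney--Simkin~\cite{PSS+22}. There is therefore no in-paper argument to compare your proposal against.

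As a sketch of the FKNP proof itself, your outline is in the right spirit---iterated random capture with geometric decay of the uncovered remainder over $O(\log \ell)$ rounds is indeed the mechanism---but the halving step as you describe it has a real gap. The Bayes bound $\probability[R \subseteq H_{i+1} \mid U_i \subseteq H_{i+1}] \leq q^{|R|} / \probability[U_i \subseteq H_{i+1}]$ is only useful together with a \emph{lower} bound on $\probability[U_i \subseteq H_{i+1}]$, and $q$-spreadness supplies none; after several rounds $U_i$ is large and the denominator may be as small as $q^{|U_i|}$, which destroys the estimate. The actual FKNP argument does not resample-and-condition in this way. Instead, given the current target $H$ and the fresh random set $W$, one uses spreadness to count, over all of $\cH$, the possible ``bad'' remainders---fragments $S$ of size exceeding half the current uncovered part that cannot be swapped for a new target $H' \in \cH$ with $H' \setminus W$ small---and shows by a first-moment computation that with high probability some good swap exists. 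In other words, spreadness is used globally to bound the number of dangerous fragments inside $Z$, not to control a conditional distribution after accumulated conditioning. You correctly flag this as ``the part that requires genuine care'', but the mechanism you propose for it does not work as stated.
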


We cannot use \cref{theorem:FKNP} to prove part~\ref{item:main-balanced} of \Cref{thm:main} due to the undesired additional logarithmic factor.
Since this factor cannot be removed in general, we require the following result, which is tailored to the setting of $L$-chains.
The proof is given in \cref{section:strongspread-to-threshold}.

\begin{lemma}[Threshold Lemma] \label{lemma:fromstrongspreadtothreshold}
	Let $K', d, \lambda, k, \ell, r, \alpha$ with $  d > \lambda$ and $\alpha > 0$ be given.
	Let $d,\lambda>0$ and let $L$ be a $(k, \ell, r)$-link such that sufficiently large closed $L$-chains are $(d, \lambda)$-balanced.
	Let $\mu$ be a probability distribution supported on $\mathcal{C}(n, L)$, which is $(K' n^{-1/d},(e(L)n/r)^{-\alpha},K')$-strongly-spread.
	Then, for $q = \omega(n^{-1/d})$, a $q$-binomial random subset of $[n]^k$ contains an element of $\cH(n,L,\mu)$ as a subset with high probability.
\end{lemma}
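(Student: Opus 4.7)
The plan is to follow the iterative planting argument of Frankston--Kahn--Narayanan--Park~\cite{FKN+21}, refined as in Espuny Díaz--Person~\cite{EP23}, which removes the $\log|Z|$ loss when the input distribution is strongly spread. The quantitative point is that every element of $\mathcal{H}(n, L, \mu)$ has size $e(L)n/r = \Theta(n)$, and strong spread with exponent $\alpha$ controls partial sums at scale $j \geq (e(L)n/r)^{-\alpha}|S|$, which is polynomially better than the scale $j = 1$ of ordinary $q$-spread. This polynomial slack is exactly what makes a constant number of sampling rounds suffice where the vanilla argument requires $\Theta(\log n)$.

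Concretely, I would write $q = \omega(n^{-1/d})$ as a sum of $T = T(\alpha) = O(1/\alpha)$ equal parts $q_1 = \cdots = q_T = q/T$, sample independent subsets $W_t \sim Z(q_t)$ of the ground set $Z = [n]^k$, and set $U_t = W_1 \cup \cdots \cup W_t$. By a standard coupling, it suffices to show that $U_T$ contains $E(C)$ for some $C$ in the support of $\mu$ with high probability. The induction hypothesis maintained through the rounds is that with high probability there exist a chain $C_t$ in the support of $\mu$ and a defect set $D_t \subseteq Z \setminus U_t$ of size at most $(e(L)n/r)^{1 - \beta t}$ satisfying $E(C_t) \subseteq U_t \cup D_t$, where $\beta = \beta(\alpha) > 0$ is a constant chosen small compared to $\alpha$.

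The heart of the argument is the inductive step, which is a switching-type argument. Conditional on $(C_{t-1}, D_{t-1})$, the strong-spread hypothesis applied to subsets $S \subseteq E(C_{t-1})$ with $|S| \geq (e(L)n/r)^{-\alpha}|E(C_{t-1})|$ guarantees that $\mu$ assigns substantial mass to chains $C$ that agree with a constant fraction of $D_{t-1}$ while also having a short remaining tail $E(C) \setminus (U_{t-1} \cup D_{t-1})$; the $(d, \lambda)$-balancedness of $L$-chains is crucial here to ensure that no such tail can be too edge-dense relative to its vertex count. Sampling $W_t$ and using a first-moment deletion argument, reinforced by a second-moment estimate that again invokes strong spread, shows that with high probability some valid candidate $C_t$ in this family has its remaining tail contained in $W_t$, so that the new defect $D_t$ shrinks to at most $|D_{t-1}| \cdot (e(L)n/r)^{-\beta}$.

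After $T = \lceil 1/\beta \rceil + 1$ rounds the defect has size less than one, so $E(C_T) \subseteq U_T \subseteq Z(q)$ and the proof is complete. The main obstacle is the propagation of the strong-spread property through the conditional distributions needed in each round: conditioning on a partial tail $S_{t-1} \subseteq E(C)$ degrades the effective spread constant and narrows the range of $j$ for which the strong-spread partial-sum estimate is available, so $\beta$ must be chosen small relative to $\alpha$ and to the balancedness parameters, with $T$ correspondingly large, in order to keep the induction alive simultaneously at every round. Getting this balance right, and verifying that the same constants work across all $t \leq T$, is the most delicate part of the argument.
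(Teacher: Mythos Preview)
Your outline is essentially the Espuny D\'iaz--Person argument itself, and in spirit that is exactly what is needed. The paper, however, does not redo this argument: it quotes the Espuny D\'iaz--Person result as a black box (stated there as a separate lemma about $(q,b^{-\alpha},K)$-strongly-spread $b$-bounded multihypergraphs), and the entire proof of \Cref{lemma:fromstrongspreadtothreshold} consists of checking the hypotheses of that lemma for the multihypergraph $H=\cH_{\text{multi}}(n,L,\mu)$ on ground set $[n]^k$. Most of these checks are immediate: $H$ has $m=n^k$ vertices, is $b$-bounded with $b=e(L)n/r=\Theta(n)$, and $b=\omega(1)$, $b=O(\sqrt m)$ since $k\geq 2$. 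The only substantive point is the lower bound $q=K'n^{-1/d}\geq 2b/(Cm)$ required by that lemma, which amounts to $d\geq 1/(k-1)$; the paper obtains this by applying $(d,\lambda)$-balancedness to a single $k$-edge (so $1\le dk-\lambda<d(k-1)$, using $\lambda>d$).

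This means your account of where $(d,\lambda)$-balancedness enters is off. It is not used inside the inductive planting step to control ``edge-density of tails'' --- the strong-spread hypothesis already encodes all the structural control the iteration needs. Balancedness is used only once, and only parametrically, to pin down $d>1/(k-1)$ so that the spread level $K'n^{-1/d}$ sits above the threshold $2b/(Cm)$ demanded by the black-box lemma. If you were to carry out your sketch in full, this same inequality is where you would need balancedness (to ensure the per-round sampling probability $q_t$ is large enough to absorb a constant fraction of the defect), not in the conditional spread propagation you flag as the delicate part. Your worry about propagating strong spread through conditioning is real in the general proof of the Espuny D\'iaz--Person lemma, but here it is already handled inside the cited result and does not need to be redone.
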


\subsection{Putting everything together}

To conclude this section, we derive our main result from \Cref{lem:typical-to-correct-distribution,lemma:fromcorrecttospread,lemma:fromstrongspreadtothreshold,theorem:FKNP}.

\begin{proof}[Proof of \Cref{thm:main}]
	Given $k,r, \ell$ and $s_1 \geq 5(r+\ell)$, let $K$ be large enough to apply \cref{lem:typical-to-correct-distribution}.
	Choose $K'$ to satisfy the requirements of \cref{lemma:fromcorrecttospread}.
	In the following, we assume that is $n$ sufficiently large to apply \cref{lem:typical-to-correct-distribution,lemma:fromcorrecttospread}.
	Consider $L$ and $G$ as in the statement of \cref{thm:main}.
	By \cref{lem:typical-to-correct-distribution}, there exists a $K$-correct distribution $\mu$ on $\mathcal{C}(n, L)$ that is supported by $G^{(k)}$.

	To obtain part~\ref{item:main-general} of the theorem, let us assume that $\lambda=d$.
	In this case, $\mu$ is $K'n^{-1/d}$-spread by \cref{lemma:fromcorrecttospread}.
	We may then finish by applying \Cref{theorem:FKNP} with $Z = [n]^k$ and $\mathcal{H} = \cH(n,L,\mu)$.
	For part~\ref{item:main-balanced}, we assume that $\lambda>d$.
	So $\mu$ is $(K'n^{-1/d},(e(L)n/r)^{-(\lambda/d - 1)/2},K')$-strongly-spread by \cref{lemma:fromcorrecttospread}.
	Note that $\alpha := (\lambda/d - 1)/2 >0$.
	So we can finish by applying \Cref{lemma:fromstrongspreadtothreshold}.
\end{proof}

\section{Proof of \cref{lem:typical-to-correct-distribution}} \label{sec:typical-to-correct-distribution-proof}

Next we prove \cref{lem:typical-to-correct-distribution}, which is the most involved part of this article.
Recall the setting of \cref{lem:typical-to-correct-distribution}:
for a given $(k, \ell, r)$-link $L$ and $n$-vertex $(k, \ell)$-digraph $G$ on vertex set $[n]$, we need to find a distribution $\mu$ over $\mathcal{C}(L, n)$,
the set of spanning closed $L$-chains on $[n]$, such that $G^{(k)}$ supports $\mu$.
The distribution must be $K$-correct, meaning that if $C$ is drawn according to $\mu$, then $\probability[I \subseteq C] \leq K^v n^{c-v}$ for all $v$-vertex subgraphs $I$ of $G^{(k)}$ with $c$ components.

Our distribution $\mu$ is described by a randomised algorithm which outputs a spanning $L$-chain $H$ in $G$.
Given $s_1$ and $n$ divisible by $r$,
we find integers $s_2, m$ with $s_2$ satisfying $s_1 \leq s_2 \leq 4 s_1$ such that $n = (m-1)s_1+s_2+m(s_1-2\ell)$.
Let $n_1 = s_1(m-1)+s_2$ and $n_2 = m(s_1-2\ell)$.

The algorithm builds $H$ in three phases.
First, we partition the vertex set of $G$ into two sets $V_1, V_2$ of size $n_1, n_2$, respectively.
Secondly, in $G[V_1]$ we find a random spanning collection of vertex-disjoint open $L$-chains $R_1, \dotsc, R_{m}$, where $m-1$ of the chains use $s_1$ vertices and the remaining one uses $s_2$ vertices.
Thirdly, in $V_2$ we find a random collection of vertex-disjoint subgraphs $Q_1, \dotsc, Q_{m}$, of size $s_1 - 2 \ell$ each, such that the concatenation $R_1 Q_1 R_2 Q_2 \dotsb R_t Q_t$ is a closed Hamilton $L$-chain in~$G$.

We choose the subgraphs $R_i, Q_i$ by looking at the corresponding `property graphs' $P_j = \PG{G}{\HamConn_L}{s_j}$ of~$G$, for $1\leq j \leq 2$.
To simplify the discussion, assume $s_1 = s_2$ and $s_1 \equiv \ell \bmod r$ in this paragraph.
Since each edge $X \in P_1[V_1]$ yields naturally an $s_1$-vertex open $L$-chain in $G[V_1]$,
a perfect matching $M \subseteq P_1[V_1]$ yields a collection of vertex-disjoint open $L$-chains on $s_1$ vertices each which can serve as our desired set of open $L$-chains $R_1, \dotsc, R_m$.
Our assumptions imply that $\delta_{2 \ell + 1}(P_1)$ is sufficiently large, and this will in turn imply that not only $P_1[V_1]$ will admit perfect matchings but also a spread distribution over its set of perfect matchings.
In this way, we are able to choose a matching $M \subseteq P_1[V_1]$ at random,
which in turn yields a random collection of vertex-disjoint open $L$-chains,
as needed in the description of the algorithm.
The choice of $Q_i$ is done in a similar fashion, after restricting ourselves to consider matchings $M_2 \subseteq P_2[V_2]$ which are suitably aligned with the open ends of the already-selected open chains $R_i$.
We then argue that these collections of paths satisfy the required conditions.

The first tool we require to analyse the previous construction is the following.
It says that if an $s$-uniform matching $M$ is chosen from a suitably spread random distribution, then $\partial_2 M$ contains each graph `correctly'.

\begin{lemma}[Spread Matching to Correct Distribution]\label{lem:spread-property-matching-to-correct-distribution}
	For all $C>1$ the following holds.
	Let $n, s$ be positive integers.
	Let $P$ be an $n$-vertex $s$-graph, and
	suppose that $\mu$ is a $(Cn^{-s+1})$-spread distribution on the set of perfect matchings in $P$.
	Let $I$ be a $v$-vertex $2$-graph with $c \leq n/2$ components such that $V(I) \subseteq V(P)$.
	Let $M$ be a random perfect matching in $P$ drawn according to $\mu$.
	Then $\probability[ I \subset \partial_2 M] \leq (2C)^v n^{c-v}$.
\end{lemma}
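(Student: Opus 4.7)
The plan is to union bound over the way the connected components of $I$ are assigned to the edges of $M$. Since the edges of $M$ are pairwise disjoint and each edge of $I$ forces its two endpoints into a common edge of $M$, the event $I \subseteq \partial_2 M$ forces the vertex set $V(I_j)$ of each connected component $I_j$ (for $j=1,\dots,c$) to lie inside a unique edge $\phi(j) \in M$. This defines a map $\phi\colon [c] \to M$. I will restrict attention to the case where $I$ has no isolated vertices (isolated vertices do not influence the event), so that $v=\sum_{j=1}^c v_j$ with $v_j=|V(I_j)|\geq 2$, and in particular $v \geq 2c$.

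I would then group the possible assignments $\phi$ by the unordered set partition $\pi$ of $[c]$ determined by placing $j,j'$ in the same block exactly when $\phi(j)=\phi(j')$. For a partition $\pi$ with blocks $A_1,\dots,A_t$, let $W_i=\bigcup_{j\in A_i} V(I_j)$ and $w_i=|W_i|=\sum_{j\in A_i} v_j$. Union-bounding over ordered tuples of distinct $s$-edges $(e_1,\dots,e_t)$ with $e_i \supseteq W_i$: the event $\{e_1,\dots,e_t\}\subseteq M$ has probability at most $(Cn^{-s+1})^t$ by the spreadness of $\mu$, and the number of such tuples is at most $\prod_i \binom{n-w_i}{s-w_i} \leq n^{ts-v}$. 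Hence the contribution of each partition $\pi$ with $t$ blocks is at most $C^t n^{t-v}$.

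Since the number of partitions of $[c]$ with exactly $t$ blocks is the Stirling number of the second kind $S(c,t)$, summing yields
\[
\Prob[I \subseteq \partial_2 M] \;\leq\; n^{-v}\sum_{t=1}^{c} S(c,t)(Cn)^t.
\]
The main analytic step is to show $\sum_{t=1}^{c} S(c,t)(Cn)^t \leq (2Cn)^c$. For this I would invoke the classical identity $x^c=\sum_{t=0}^{c} S(c,t) x^{(t)}$ together with the elementary estimate $x^{(t)}=x(x-1)\cdots(x-t+1)\geq (x/2)^t$, valid for $x\geq 2c$ and $t\leq c$. Taking $x=Cn$, the hypothesis $c \leq n/2$ combined with $C \geq 1$ ensures $Cn \geq 2c$, and hence $(Cn)^t \leq 2^t (Cn)^{(t)}$; summing gives $\sum_t S(c,t)(Cn)^t \leq 2^c (Cn)^c$. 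Assembling everything yields $\Prob[I \subseteq \partial_2 M] \leq (2C)^c n^{c-v} \leq (2C)^v n^{c-v}$, where the final step uses $v \geq c$ and $2C \geq 1$. The main obstacle is this combinatorial sum, which is handled cleanly by the Stirling identity; everything else is bookkeeping on top of the spread property.
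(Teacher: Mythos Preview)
Your proof is correct and follows essentially the same approach as the paper: both observe that each component of $I$ must land in a single edge of $M$, union-bound over the ways the $c$ components are grouped into $t$ (respectively $r$) edges of $M$, apply the spread bound $(Cn^{-s+1})^t$, and then control the resulting sum over $t$. The only difference is cosmetic bookkeeping in the final estimate: you count partitions exactly via Stirling numbers and close the sum with the identity $x^c=\sum_t S(c,t)x^{(t)}$ together with $x^{(t)}\geq (x/2)^t$, whereas the paper uses the cruder surjection bound $\binom{c}{r}r^{c-r}$ and then simply $r^{c-r}\leq n^{c-r}$ to get $\sum_r \binom{c}{r}r^{c-r}n^r\leq 2^c n^c$.
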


\begin{proof}
	A component $F$ of $I$ can appear as a subgraph of $\partial_2 M$ only if there exists an edge $e$ of $M \subseteq P$ with $V(F) \subseteq e$.
	We first estimate the number of (not necessarily perfect) matchings $N=\{e_1,\ldots,e_r\}$ such that each $e_i$ contains at least one component of $I$.
	Suppose there are $r$ edges in $N$ that cover the $c$ components in $I$.
	There are at most $\binom{c}{r}$ ways to select the first component for each edge and then for each such choice at most further $r^{c-r}$ ways to assign the remaining components to the edges of $N$.
	If $v_i$ refers to the total number of vertices of all components in $I$ that are contained in $e_i$,
	then there are at most $n^{s-v_i}$ $s$-edges in $P$ that contain the components in $e_i$;
	in total, there are at most $n^{rs-v}$ choices for $N$ given the assignment of the components to the $e_i$.
	Therefore, there are at most $\sum_{r=1}^c \binom{c}{r}r^{c-r}n^{sr-v}$ choices for $N$.
	We compute
	\begin{align*}
		\probability[ I \subset \partial_2 M]
		 & \leq \sum_{r=1}^c \sum_{N}\probability[N \subseteq M]
		\leq \sum_{r=1}^c \binom{c}{r}r^{c-r}n^{sr-v} \cdot (Cn^{-s+1})^r \\
		 & \leq C^vn^{-v}	\sum_{r=1}^c \binom{c}{r}r^{c-r}n^{r}
		\leq C^vn^{c-v} \cdot 2^c
		\leq (2C)^vn^{c-v},
	\end{align*}
	which completes the proof.
\end{proof}

We also use the following variation of \cref{lem:spread-property-matching-to-correct-distribution} where we work with different uniformities and we pick one edge of different size.
We need this more technical result because in general we work with $s_1 \neq s_2$.

\begin{corollary}\label{lem:spread-property-matching-to-correct-distribution-2}
	For all $C,s_2 \geq 1$,
	there exists $K$ such that the following holds.
	Suppose $n, m, s_1\in \bN$ with $n = s_2 + s_1 m$.
	Let $P$ be an $n$-vertex $s_1$-graph such that for any $s_2$-set $X\subseteq V(P)$,
	there exists a $(Cn^{-s_1 + 1})$-spread distribution on the set of perfect matchings in $P - X$.
	Suppose $\mu_2$ is a distribution over $\binom{V(P)}{s_2}$ such that if $X$ is drawn according to $\mu_2$ and $X'$ is a fixed $s_2$-set, then $\probability[X = X'] \leq 2 \binom{n}{s_2}^{-1}$.

	We choose a random $2$-graph $R$ in $V(P)$ as follows.
	First, pick a random $s_2$-set $X \subseteq V(P)$ according to $\mu_2$,
	and in $P - X$ choose a matching $M \subseteq P-X$ according to a $(Cn^{-s_1 + 1})$-spread distribution.
	Let $R = \partial_2 ( X \cup M )$ (so $R$ is a $2$-graph consisting of one clique of size $s_2$ and $m$ cliques of size $s_1$).
	Let $I$ be a $v$-vertex $2$-graph with $c \leq n/2$ components such that $V(I) \subseteq V(P)$.
	Then $\probability[ I \subset R] \leq K^v n^{c-v}$.
\end{corollary}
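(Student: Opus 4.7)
The plan is to decompose the event $\{I\subseteq R\}$ according to which components of $I$ land in the $s_2$-clique on $X$ versus which land in an $s_1$-clique coming from some edge of $M$, and then aggregate using the two given spread hypotheses. Observe first that $R = \partial_2(X\cup M)$ is the union of a clique on $X$ together with one clique on each edge of $M$, and these vertex sets partition $V(P)$; hence every component $F_i$ of $I$ must lie entirely inside $X$ or entirely inside a single edge of $M$. Writing $V_S := \bigcup_{i\in S}V(F_i)$ and $E_S := \{V_S\subseteq X\}\cap\{(I-V_S)\subseteq \partial_2 M\}$, we obtain $\{I\subseteq R\}\subseteq \bigcup_{S\subseteq[c]}E_S$, and the union bound over $S$ is my starting point.

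Next I would control each $\probability[E_S]$ by handling the two sources of randomness separately. For the $X$-factor, the hypothesis $\probability[X=X']\leq 2\binom{n}{s_2}^{-1}$ yields $\probability[V_S\subseteq X]\leq 2\binom{s_2}{|V_S|}\binom{n}{|V_S|}^{-1}\leq C_0 n^{-|V_S|}$ for a constant $C_0=C_0(s_2)$, and this probability vanishes whenever $|V_S|>s_2$. For the $M$-factor, given any admissible $X$ (which forces $V_{\bar S}\cap X=\emptyset$, else $\partial_2 M$ cannot contain $I-V_S$), the perfect matching is drawn from a distribution on $P-X$ that is $(Cn^{-s_1+1})$-spread, and so a fortiori $(C(n-s_2)^{-s_1+1})$-spread. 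Applying \cref{lem:spread-property-matching-to-correct-distribution} to the graph $I-V_S$ inside $P-X$ (which has $v-|V_S|$ vertices and $c-|S|$ components) then gives the conditional bound $(4C)^{v-|V_S|}\,n^{(c-|S|)-(v-|V_S|)}$, absorbing the minor discrepancy between $n$ and $n-s_2$ into the constant.

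Multiplying the two factors, the exponent of $n$ in the $S$-summand collapses to $c-v-|S|$, and the exponential weight is at most $C_0(4C)^v$. Summing over $S\subseteq[c]$ produces $\sum_{k=0}^c\binom{c}{k}n^{-k}\leq (1+1/n)^c\leq e$ (using $c\leq n$), giving $\probability[I\subseteq R]\leq K^v n^{c-v}$ for $K=K(s_2,C)$ chosen large enough (noting $v\geq 1$ except in the trivial empty case).

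The only real technical point I anticipate is the bookkeeping in the $M$-factor: verifying that a $(Cn^{-s_1+1})$-spread distribution automatically satisfies the spread requirement of \cref{lem:spread-property-matching-to-correct-distribution} on the smaller ground set $V(P-X)$, and that the hypothesis $c\leq n/2$ of that lemma still applies to the pair $(c-|S|,\,n-s_2)$ for $n$ sufficiently large relative to $s_2$. Both points are routine since $s_2$ is a fixed constant; otherwise the argument is a straightforward aggregation.
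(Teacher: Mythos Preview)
Your proposal is correct and follows essentially the same route as the paper's proof: decompose over the subset $S\subseteq[c]$ of components landing in $X$, bound $\probability[V_S\subseteq X]$ via the near-uniformity of $\mu_2$, bound the conditional contribution of $I-V_S$ in $\partial_2 M$ by invoking \cref{lem:spread-property-matching-to-correct-distribution}, and sum the resulting $n^{-|S|}$ factors as $(1+1/n)^c\leq e$. You are in fact slightly more explicit than the paper about the two bookkeeping points (passing from $n$ to $n-s_2$ in the spread parameter, and the $c\leq n/2$ hypothesis on the smaller ground set), both of which the paper absorbs silently into the constant $K=4\max\{C,s_2\}$.
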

\begin{proof}
	We proceed similarly as in the proof of  \cref{lem:spread-property-matching-to-correct-distribution}.
	A component $F$ of $I$ can appear in $R$ only if some of its components are completely contained in $\partial_2 X$
	and all other components are contained in $\partial_2 M$.
	Suppose that $C_1, \dotsc, C_c$ are the components of $I$.
	For $J \subseteq [c]$, define $C_J := \bigcup_{j \in J} C_j$.
	Given $J \subseteq [c]$,
	we estimate first the probability that $X$ contains precisely the components in $C_J$ and $\partial_2 M$ contains the other components of $I$.
	This is used then to estimate the desired probability using an union bound.

	Let $J \subseteq [c]$ be given.
	Suppose $C_J$ has $v_J$ vertices and define $D_J=I-C_J$.
	Suppose first that $v_J \leq s_2$.
	The set $V(C_J)$ is contained in $\binom{n - v_J}{s_2 - v_J}$ sets of size $s_2$,
	and each $s_2$-set is selected by $\mu_2$ with probability at most $2 \binom{n}{s_2}^{-1}$.
	Thus
	\[ \probability[ C_J \subseteq \partial_2 X ] \leq \frac{2 \binom{n - v_J}{s_2 - v_J}}{\binom{n}{s_2}}
		= \frac{2 \binom{s_2}{v_J}}{\binom{n}{v_J}}
		\leq 2 s_2^{v_J} n^{-v_J}, \]
	where we used $\frac{s_2-i}{n-i} \leq \frac{s_2}{n}$ for all $0\leq i \leq v_J-1$ in the previous step.
	If $s_2 > v_J$, then $\probability[ C_J \subseteq X ] = 0$, so the above bound is valid for any choice of $J$.

	By \cref{lem:spread-property-matching-to-correct-distribution},
	we obtain $\probability[D_J \subseteq \partial_2 M | C_J \subseteq  \partial_2X ] \leq (2C)^{v - v_J} n^{c - |J| - (v - v_J)}$.
	Set $K = 4 \max\{ C, s_2 \}$.
	This in turn yields
	$\probability[ C_J \subseteq\partial_2 X , D_J \subseteq \partial_2 M ]
		\leq \probability[C_J \subseteq\partial_2 X]\probability[D_J \subseteq \partial_2 M | C_J \subseteq\partial_2 X ]
		\leq (K/2)^{v} n^{c - v - |J|}$.

	Thus an union bound over all the choices of $J$ gives
	\begin{align*}
		\probability[ I \subseteq R ]
		 & \leq \sum_{J \subseteq [c]} \probability[ C_J \subseteq\partial_2 X , D_J \subseteq \partial_2 M ]      \\
		 & \leq (K/2)^v n^{c-v} \sum_{J \subseteq [c]} n^{-|J|} = (K/2)^v n^{c-v} \sum_{i=0}^c \binom{c}{i} n^{-i} \\
		 & = (K/2)^v n^{c-v} (1+n^{-1})^c \leq (K/2)^v n^{c-v} e^{c/n} \leq K^v n^{c-v},
	\end{align*}
	where in the last line we used $1+x \leq e^x$ and $c \leq n/2$ to obtain the (very crude) bounds $e^{c/v} \leq e^{1/2} \leq 2^v$.
\end{proof}

Let $G$ be a $k$-graph.
We say that a family of subgraphs $\cF$ is \emph{$q$-spread in $G$} if there is a $q$-spread distribution $\mu\colon \cF \to [0,1]$ with the ground set $E(G)$.
In order to apply \cref{lem:spread-property-matching-to-correct-distribution},
we need to ensure that a graph of large enough minimum degree contains a spread perfect matching.
Consider a partition $\cV$ of the vertex set of $G$.
In this context, we say that a $k$-set $S$ is \emph{partite} if it contains at most one vertex of each part of $\cV$.
We say that $G$ is \emph{$k$-partite} if all of its edges are partite with respect to some partition of $V(G)$ into $k$ parts.

The next lemma states that balanced $k$-partite $k$-graphs with large minimum $1$-degree contain spread perfect matchings.
This is known to be true in the non-partite setting~\cite{KKK+22,KMP23,PSS+22} and the available proofs can be adapted to work in the partite setting.

\begin{lemma}[Minimum Degree to Spread Perfect Matching]\label{lem:minimum-degree-to-spread-partite-perfect-matching}
	Let $1/k\gg 1/C \gg 1/m$ and $n=km$.
	Let $G$ be a $k$-partite $k$-graph with each part of size $m$ and $\delta_1(G) \geq (1-1/(2k))m^{k-1}$.
	Then the set of perfect matchings in $G$ is $(Cn^{-k+1})$-spread.
\end{lemma}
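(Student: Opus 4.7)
The plan is to adapt the spread-matching arguments developed in the non-partite setting by Kang, Kelly, Kühn, Osthus and Pfenninger~\cite{KKK+22}, Pham, Sah, Sawhney and Simkin~\cite{PSS+22}, and Kelly, Müyesser and Pokrovskiy~\cite{KMP23}, each of whom established analogous statements under minimum degree conditions. All of these arguments couple a random iterative matching process with an absorption step, and both components extend to the $k$-partite setting provided one consistently restricts to partite edges, partite subgraphs, and partite random reservations throughout.

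The construction proceeds in two phases. First I would reserve, uniformly at random, small sets $U_j \subseteq V_j$ of size $\Theta(m)$ in each part, intended to serve as absorbers. Next I would construct a random near-perfect matching in the partite subgraph induced on $V(G) \setminus \bigcup_j U_j$ by fixing an ordering $v_1, \ldots, v_{m'}$ of $V_1 \setminus U_1$ and, at step $i$, selecting a partite edge through $v_i$ uniformly at random among those whose other endpoints lie in $(V_j \setminus U_j)$ for $j \geq 2$ and are not yet matched. The hypothesis $\delta_1(G) \geq (1 - 1/(2k))m^{k-1}$ ensures that after removing the at most $O((i-1)m^{k-2})$ edges blocked by previously matched vertices, the number of available edges at step $i$ is $\Omega(m^{k-1})$, so each specific edge is selected with conditional probability $O(m^{-k+1}) = O(n^{-k+1})$. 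Telescoping this bound over the greedy phase yields the desired single-step spreadness.

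For the second phase, I would complete the matching on the residual partite subgraph, which is induced on $\bigcup_j U_j$ together with the vertices left uncovered by the greedy phase. The $1/(2k)$ slack in the hypothesis ensures that, with high probability over the random reservation and the outcome of the greedy phase, the residual subgraph inherits a partite minimum $1$-degree of essentially the same quality on parts of size $\Theta(m)$, so the same machinery applies---either by induction on $m$, or by directly invoking the analogous partite statement in~\cite{KKK+22,PSS+22,KMP23}. Multiplying the per-step spread factors across the two phases produces the required $(Cn^{-k+1})$-spread distribution on perfect matchings of~$G$.

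The main obstacle is the careful bookkeeping of spread factors in the absorption phase: one must verify that the conditional distribution on perfect matchings of the residual subgraph, given the outcome of the greedy phase, is still spread with a compatible constant, and that the partite analog of the minimum degree condition survives the random reservation and greedy phase simultaneously with high probability. These estimates are handled in the non-partite references above via switching and concentration arguments for random matchings, and they go through in the present setting after restricting all auxiliary graphs and reservations to respect the partition into $V_1, \ldots, V_k$.
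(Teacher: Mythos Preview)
Your proposal matches the paper's treatment: the paper does not supply a proof of this lemma but simply asserts that the non-partite arguments of~\cite{KKK+22,KMP23,PSS+22} adapt to the partite setting, which is precisely what you outline (with considerably more detail than the paper provides). One small caution on your sketch: the bound ``blocked edges $\le O((i-1)m^{k-2})$'' only leaves $\Omega(m^{k-1})$ available edges when $i-1$ is a sufficiently small fraction of $m$, so for $k\ge 3$ the greedy phase must stop well before exhausting $V_1\setminus U_1$ and hand a substantial remainder to the recursive/absorption step---this is consistent with the iterative schemes in the cited references, but your phrasing could be read as running the greedy phase all the way through $V_1\setminus U_1$, which would fail.
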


We also record a consequence of the simple fact that the uniform distribution over the set of all Hamilton cycles in the complete graph $K_n$ is $((1+o(1))e/n)$-spread (see \cite[Section 2]{KNP21} for a similar calculation).

\begin{lemma}\label{lemma:spreadofhamcycles}
	Let $n$ be sufficiently large, and let $C \subseteq K_n$ be a Hamilton cycle chosen uniformly at random.
	Then $\probability[I \subseteq C] \leq (2en^{-1})^{|I|}$ for any $I \subseteq E(K_n)$.
\end{lemma}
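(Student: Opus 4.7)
The plan is to derive an exact formula for $\probability[I \subseteq C]$ via a contraction argument and then apply Stirling-type estimates.

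First, I would reduce to the nontrivial case: if $I$ contains a vertex of degree three or more, or a cycle of length strictly less than $n$, then no Hamilton cycle of $K_n$ contains $I$ and the bound holds trivially. Otherwise $I$ is a vertex-disjoint union of $c$ paths using $|I| + c$ vertices in total; in particular $c \leq |I|$.

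To count Hamilton cycles of $K_n$ containing $I$, I would contract each path-component of $I$ to a single super-vertex with two distinguishable endpoints. Hamilton cycles of $K_n$ containing $I$ are then in bijection with pairs consisting of an undirected cyclic ordering of the $n-|I|$ resulting super-vertices, together with a choice of orientation for each of the $c$ contracted paths. This gives $(n-|I|-1)!/2 \cdot 2^c$ such cycles in total. Dividing by the total count $(n-1)!/2$ of Hamilton cycles in $K_n$ yields the identity
\[
    \probability[I \subseteq C] = \frac{(n-|I|-1)! \cdot 2^c}{(n-1)!} = \frac{2^c}{(n-1)(n-2)\cdots(n-|I|)}.
\]

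Using $c \leq |I|$, it then suffices to prove $(n-1)(n-2)\cdots(n-|I|) \geq (n/e)^{|I|}$ for $n$ sufficiently large. The cases $|I| \in \{0,1\}$ are handled by direct inspection (for $|I|=1$ one has $\probability[I \subseteq C] = 2/(n-1) \leq 2e/n$). For $|I| \geq 2$, I would combine the standard estimates $\binom{n-1}{|I|} \geq ((n-1)/|I|)^{|I|}$ and $|I|! \geq \sqrt{2\pi |I|}\,(|I|/e)^{|I|}$ to obtain
\[
    (n-1)(n-2)\cdots(n-|I|) = \binom{n-1}{|I|} \cdot |I|! \geq \sqrt{2\pi |I|} \left(\frac{n-1}{e}\right)^{|I|}.
\]
For $|I| \geq 2$ one has $\sqrt{2\pi |I|} \geq \sqrt{4\pi} > e \geq (n/(n-1))^{|I|}$ (the final bound using $(1+1/(n-1))^{|I|} \leq e^{|I|/(n-1)} \leq e$ for $|I| \leq n-1$), which is precisely the slack needed to replace $n-1$ by $n$ and deliver the clean constant $2e$.

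The essential ingredient is the Stirling-type lower bound on the product; every other step is elementary counting. The only point of care is arranging the constants so that the $\sqrt{2\pi |I|}$ slack from Stirling exactly absorbs the $n/(n-1)$ correction, yielding the stated constant $2e$ rather than something slightly larger; this is why the hypothesis ``$n$ sufficiently large'' is needed.
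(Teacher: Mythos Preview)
Your argument is correct and complete apart from one easily-patched edge case: when $I$ is itself a Hamilton cycle of $K_n$ (a cycle of length exactly $n$), it falls through both your trivial case and your ``union of paths'' case. Here $\probability[I \subseteq C] = 2/(n-1)!$, and the bound $(2e/n)^n$ follows immediately from Stirling. With that one-line addition, your contraction count plus the Stirling estimate deliver the result cleanly; note also that once $I$ is a genuine union of paths one automatically has $|I|+c \leq n$ and hence $|I| \leq n-1$, so your use of $e^{|I|/(n-1)} \leq e$ is justified in every remaining case.

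The paper does not actually prove this lemma; it simply records the statement as a consequence of the uniform distribution on Hamilton cycles being $((1+o(1))e/n)$-spread and points to Section~2 of Kahn, Narayanan and Park for a similar calculation. Your argument is exactly the kind of computation that citation is gesturing at, so there is nothing substantive to compare: you have supplied a self-contained proof where the paper defers to the literature.
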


Now we are ready to give the proof of the main result of this section.

\begin{proof}[Proof of \cref{lem:typical-to-correct-distribution}]
	Given $k, r, \ell, s_1$, we introduce $K', K'',K, n$ such that
	\begin{align*}
		1/s_1 \gg 1/K'\gg 1/K'' \gg 1/K \gg 1/n.
	\end{align*}
	Suppose $n$ is divisible by $r$.
	We may assume that $s_1 \equiv \ell \bmod r$, as otherwise we simply increase $s_1$ until the assumption holds and the assumptions of the lemma are still valid for all $s_1\leq s \leq 3s_1$, which is sufficient for us.
	Let $m, m'$ be integers such that
	\begin{equation}
		n = 2m(s_1 - \ell)+m', \label{equation:n}
	\end{equation}
	with $m \geq 0$ and $0 \leq m' < 2(s_1 - \ell)$.
	We define $s_2 = s_1 + m'$.
	Note that $s_1 \leq s_2 \leq 3 s_1$ holds.
	Moreover, it is easy to check that $s_2 \equiv \ell \bmod r$.
	Indeed, since $n \equiv 0 \bmod r$ and $s_1 - \ell \equiv 0 \bmod r$, from \eqref{equation:n} we obtain that $m' = n - 2m(s_1 - \ell) \equiv 0 \bmod r$, and therefore $s_2 = s_1 + m' \equiv s_1 \bmod r$.

	Now consider a $(k,\ell,r)$-link $L$, and let $G$ be an $n$-vertex $k$-digraph with vertex set $[n]$ satisfying the assumptions of \cref{lem:typical-to-correct-distribution}.
	Hence for each $s \in \{s_1, s_2\}$, the bound $\delta_{2 \ell + 1}(P) \geq (1 - 1/s^2) \binom{n-(2\ell+1)}{s - (2\ell+1)}$ holds for $P = \PG{G}{\HamConn_L}{s}$.
	Our goal is to show that there exists a $K$-correct distribution $\mu$ over $\cC(n, L)$ such that $G^{(k)}$ supports $\mu$.
	In the following three steps, we construct a random closed Hamilton $L$-chain $H$ in $G^{(k)}$.
	In the fourth step we show that the distribution over $L$-chains obtained from this construction is indeed $K$-correct.

	\subsubsection*{Step 1: Partitioning the vertex set.}
	Note that
	\[ n = s_1(m-1)+s_2 + m(s_1 - 2\ell). \]
	To begin, partition $V(G)$ deterministically into sets $V_{1}$ and $V_{2}$ such that
	\begin{align*}
		|V_{1}|  = s_1 (m-1) + s_2 \quad\text{and}\quad
		|V_{2}|  = (s_1 - 2 \ell)m
	\end{align*}
	and such that further conditions hold which we specify below.
	The existence of such a partition can be verified by a standard probabilistic argument.

	For each $1\leq i \leq 2$, set $P_i = \PG{G}{\HamConn_L}{s_i}$.
	The first step of our random procedure whose distribution defines $\mu$ is that we uniformly at random choose an $s_2$-set $V_0 \in P_1[V_1]$ (hence $G[V_0]$ contains an $s_2$-vertex (open) $L$-chain).
	Given this (random) choice of $V_0$,
	we deterministically partition $V_{1} \setminus V_0$ into sets $V_1^1,\dots,V_1^{s_1}$ of size $m-1$ each,
	and we partition $V_{2}$ into sets $V_2^1,\dots,V_2^{s_1-2\ell}$ of size $m$ each
	such that certain properties hold (again that such partitions exist can be verified by an probabilistic argument).

	Let $P^1_1 \subseteq P_1[V_1 \setminus V_0]$ be the $s_1$-partite $s_1$-graph containing the edges which have non-empty intersection with all $V^1_{1},\dots,V_1^{s_1}$.
	Let $P^2_1 \subset P_1$ be the $s_1$-graph that contains all edges $J \cup \{v_1,\dots,v_{s_1-2\ell}\}$ where $J \subset V_1$ is a $2\ell$-set and $v_i \in V_2^i$ for each $1\leq i \leq s_1-2\ell$.

	Now we describe the properties.
	\begin{enumerate}[\upshape (D1)]
		\item \label{item:degp3} $\delta_1(P_2[V_1]) \geq (1 - 1/(3s_2)) \binom{|V_1|-1}{s_2 - 1}$.
		\item \label{item:degp1} $\delta_1(P^1_1) \geq (1-1/(3s_1))(m-1)^{s_1-1}$.
		\item  \label{item:degp2} For all $2\ell$-sets $J \subset V_1$ and $v \in V_2$, there are at least $(1-1/(3s_1)) m^{s_1-2\ell-1}$ edges in $P^2_1$ that contain $J \cup \{v\}$.
	\end{enumerate}
	Recall that $\delta_{2\ell + 1}(P_i) \geq (1-{1}/{s_i^2})  \binom{n-(2\ell + 1)}{s_i-(2\ell + 1)}$ holds for each $1\leq i \leq 2$.
	Therefore a  standard concentration analysis shows that there are such choices for $V_1,V_2$ and the partitions $V^1_{1},\dots,V_1^{s_1}$ and $V_2^1,\dots,V_2^{s_1-2\ell}$ as claimed.

	\subsubsection*{Step 2: Covering $V_1$.}
	In the following, we cover the vertices of $G[V_{1}]$ with $m-1$ open $L$-chains of order $s_1$ and one open $L$-chain of order $s_2$.
	Let $\cH_1$ be the set of perfect matchings in $P^1_1$.
	By \cref{lem:minimum-degree-to-spread-partite-perfect-matching} and property~\ref{item:degp1}, there is a distribution $\mu_1\colon \cH_1 \to [0,1]$ that is 	$(K'm^{-s_1+1})$-spread.
	Let  $M_1$ be a random perfect matching chosen according to $\mu_1$.
	For each edge $X$ of $M_1$, we fix an $s_1$-vertex open $L$-chain in $G[X]$, which is possible since $M_1 \subseteq P^1_1 \subset \PG{G}{\HamConn_L}{s_1}$.
	Similarly, we fix an $s_2$-vertex open $L$-chain in $G[V_0]$, which is possible since by construction, $V_0$ is an edge in $P_2[V_1] \subseteq \PG{G}{\HamConn_L}{s_2}$.
	Let $R$ be the collection of the $m$ $L$-chains that are obtained in this way.

	\subsubsection*{Step 3: Connections.}
	We now turn $R$ into a Hamilton $L$-chain.
	Let $R_1,\dots,R_{m} \in R$ denote a random permutation of its chains.
	For each $1 \leq i \leq m$, let $R^-_i$ and $R^+_i$ be the initial and terminal $\ell$ vertices of the $L$-chain $R_i$, respectively (note that $R^-_i \cap R^+_i = \emptyset$ as $s_1 \geq 2 \ell$).
	Next, we specify a set of pairwise disjoint $L$-chains $C_1,\dots,C_m$ such that $C_i$ connects $R^+_i$ with $R^-_{i+1}$ (index computation modulo $m$),
	the paths are vertex-disjoint from $V_1$ except for $\bigcup_{i \in [m]} (R^-_i \cup R^+_i)$, and such that $V_2$ is completely covered by $\bigcup_{i \in [m]} V(C_i)$.

	We find $C_1, \dotsc, C_m$ as follows.
	For each $1 \leq i \leq m$, let $T_i = R^+_i \cup R^-_{i+1}$ (index computation modulo $m$).
	Hence each $T_i$ has exactly $2 \ell$ vertices.
	Set $s_1''=(s_1-2\ell+1)$, and let $P_1''$ be an auxiliary $s_1''$-partite $s_1''$-graph with vertex partition $V_{2}^1,\dots,V_2^{s_1-2\ell},\{T_1,\dots,T_m\}$ and an edge $\{v_1,\dots,v_{s_1-2\ell} , J\}$ whenever $\{v_1,\dots,v_{s_1-2\ell}\} \cup J$ forms an edge in $P^2_1$.
	By property~\ref{item:degp2}, it follows that $\delta_1(P_1'') \geq (1-1/(2s_1'')) m^{s_1''-1}$.
	Let $\cH_2$ denote the set of perfect matchings of $P_1''$.
	By \cref{lem:minimum-degree-to-spread-partite-perfect-matching}, there is a distribution $\mu_2 \colon \cH_2 \to [0,1]$ that is $(K'm^{-s_1''+1})$-spread.

	Let $V'_2 = V_2 \cup \bigcup_{i \in [m]} T_i$.
	Note that each perfect matching of $P_1''$ naturally corresponds to a unique perfect matching $M_2$ in $P_1[V'_2]$ such that $T_i$ is contained in a different edge of $M_2$ for each $1\leq i \leq m$.
	We choose such a matching $M_2$ randomly according to~$\mu_2$.
	By definition of $\PG{G}{\HamConn_L}{s_1}$, we can select an $s_1$-vertex open $L$-chain $C_i$ in $G[X_i]$ which goes from $R^+_i$ to $R^-_{i+1}$ for each $1 \leq i \leq m$.
	Let $C = \bigcup_{i=1}^m C_i$.
	This choice of $C_1, \dotsc, C_m$ joins the chains $R_1, \dotsc, R_m$ and together they form a closed Hamilton $L$-chain $H$, as required.

	\subsubsection*{Step 4: Verifying correctness.}
	Let $\mu$ be the distribution over $\mathcal{H}$ (the family of subgraphs of $G$ which are closed Hamilton $L$-chains) corresponding to the random choice of $H \in \mathcal{H}$ given by this procedure.
	To finish, we need to show that $\mu$ is correct.
	To see this, consider a $v$-vertex subgraph $I' \subset G^{(k)}$ with $c$ components.
	Let $H \in \cH$ be drawn according to $\mu$.
	We have to show that $\probability[I' \subset H] \leq { K^{v}} n^{c-v}$.
	Let $I = \partial_2 I'$ be the $2$-shadow of $I'$.
	Note that $\probability[I' \subset H]\leq \probability[I \subset \partial_2 H]$.
	Moreover, $I$ has also $v$ vertices and $c$ components, and each component of $I'$ corresponds naturally to one in $I$.
	So it suffices to show that $\probability[I \subset \partial_2 H] \leq { K^{v}} n^{c-v}$.

	For each $1\leq j \leq 2$, set $I_j = I[V_j]$ and write $v_j,c_j$ for the number of vertices and components in $I_j$.
	Let $c^\ast_2$ be the number of \emph{floating} components, that is, those components of $I$ whose vertices are completely contained in $V_2$.
	Set $c^\ast_1=c-c^\ast_2$.
	Denote the components of $I$ by $J_1,\dots,J_c$.
	We call a component $X$ in $I_1 \cup I_2$ a \emph{$J$-minicomponent}, if $X$ is contained in a component $J$ of $I$.
	We also say just \emph{minicomponent} if it is a $J$-minicomponent for some $J$.
	For each $1 \leq j \leq c$, let $d_j$ denote the number of $J_j$-minicomponents in $I_1$.
	We record here that
	\begin{align}
		c_1      & = \sum_{j\in [c]} d_j, \label{equation:c1}                    \\
		c_2^\ast & = |\{j \colon d_j = 0\}|, \text{ and} \label{equation:c2star} \\
		c_1^\ast & = |\{j \colon d_j \neq 0\}| \label{equation:c1star}.
	\end{align}

	Clearly, if $I \subseteq \partial_2 H$, then also $I_1$ must be contained in $\partial_2 H$, which means that every minicomponent in $I_1$ must be covered by some chain $R_i$ as chosen in Step 1.
	But more is true: the minicomponents in $I_1$ must be covered in a restricted way so they can be covered correctly with the Hamilton $L$-chain later.

	There are two main restrictions, which we call the `adjacency obstacle' and the `cyclic obstacle'.
	We give examples to illustrate these obstacles, beginning with the adjacency obstacle.
	Suppose that $J$ is a component of $I$ such that there are three $J$-minicomponents $X_1, X_2, X_3$ in $I_1$, and one $J$-minicomponent $Y$ in $I_2$.
	If $J \subseteq \partial_2 H$ holds, then $Y$ is located in a single $L$-chain $C_i$, as chosen in Step 3.
	This implies that $Y$ has neighbours only in $R_{i} \cup R_{i+1}$ (at most two consecutive chains of $R$).
	Thus, it cannot happen that $X_1, X_2, X_3$ lie in three different chains $R_i, R_j, R_k$ of $R$ in Step 1.

	To illustrate the `cyclic obstruction', consider $J$-minicomponents $X_1, X_2, X_3$ in $I_1$ and $J$-minicomponents $Y_1, Y_2, Y_3$ in $I_2$, where $Y_1$ has neighbours in $X_1$ and $X_2$, $Y_2$ has neighbours in $X_2$ and $X_3$, and $Y_3$ has neighbours in $X_3$ and $X_1$.
	Suppose that $X_1, X_2, X_3$ are covered by the distinct chains $R_a, R_b, R_c$ respectively.
	Even though this avoids the adjacency obstacle, if $m > 3$ there is no way to order the components $R_a, R_b, R_c$ in such a way that $Y_1, Y_2, Y_3$ can be consistently located in chains $C_j$.
	This is because we would need the pairs $R_a R_b$, $R_b R_c$, and $R_c R_a$ to appear consecutively along the order chosen in Step 2, which is not possible.
	Thus the components need to be covered by the chains $R_i$ such that a cyclic arrangement of those chains is possible.

	To capture all of these restrictions, we introduce two auxiliary graphs, which take care of the adjacency and cyclic obstructions respectively.
	First, we use an auxiliary bipartite graph to record the connections between the minicomponents of $I_1$ and $I_2$.
	Let $\Gamma$ be a bipartite graph whose two vertex classes correspond to the sets of all the minicomponents in $I_1, I_2$ respectively.
	Then, for minicomponents $X\in I_1, Y\in I_2$, we add the edge $XY$ if there exists $x \in X, y \in Y$ such that $xy \in E(\partial_2 I)$.
	We say that a choice of $R$ is \emph{adjacency-valid} if every $X \in I_1$ is covered by a chain of $R$, and for every $Y \in I_2$, all the components in $N_\Gamma(Y)$ are covered by at most two chains $R_i, R_j$.

	Given an adjacency-valid choice of $R$, let $\cR=\{R_1,\ldots,R_m\}$ be the set of chains which form~$R$.
	We define an auxiliary graph $\Xi$ with $V(\Xi) \subseteq \cR$ as follows.
	The vertex set $V(\Xi)$ corresponds precisely to those chains in $\cR$ which contain at least one minicomponent of $I_1$.
	The edges are defined as follows.
	For each minicomponent $Y \in I_2$, the minicomponents in $N_\Gamma(Y)$ are covered by at most two components of $R$, say $R_a, R_b$.
	If those components are distinct, we add the edge $R_a R_b$ to $\Xi$.
	We say that $R$ is \emph{cyclic-valid} if $\Xi$ can be extended to a Hamilton cycle on the vertex set $\cR$.
	Equivalently, either $\Xi$ is a Hamilton cycle or is a vertex-disjoint collection of paths plus isolated vertices.
	We say that the choice of $R$ is \emph{valid} if it is both adjacency-valid and cyclic-valid.

	As explained before, if $I \subseteq \partial_2 H$ holds, then the choice of $R$ is valid.
	We define the following event:
	\begin{enumerate}[label={(E\arabic*)}]
		\item $\Eval$ is the event that the choice of $R$ is valid.
	\end{enumerate}

	The next condition we need is that the ordering of the chains $R_1, \dotsc, R_m$ of $\cR$ (chosen at the beginning of Step 3) is suitable for the connecting step.
	If $R$ is valid and the graph $\Xi$ has a path $P$, then we need that the vertices of $P$ appear consecutively along the random cyclic order we choose in this step.
	Similarly, if $\Xi$ is a Hamilton cycle, then we need that the order follows precisely the ordering dictated by the cycle.
	We say that the order $R_1, \dotsc, R_m$ of the chains is \emph{consistent} if
	for each $Y \in I_2$, the two chains $R_a, R_b$ of $R$ which contain all minicomponents of $N_\Gamma(Y)$ appear consecutively in the given order.
	By the previous discussion, $I \subseteq \partial_2 H$ implies that the order is consistent.
	We define that
	\begin{enumerate}[resume, label={(E\arabic*)}]
		\item  \label{itm:event-consistent}$\Econ$ is the event that the order $R_1, \dotsc, R_m$ is consistent.
	\end{enumerate}

	Next, assuming that both $\Eval, \Econ$ hold, we consider what needs to happen in the connecting step to ensure that $I \subseteq \partial_2 H$ holds.
	We clearly need that $I_2 \subseteq \partial_2 H$, but this is not enough, because we need to ensure that the non-floating minicomponents of $I_2$ are contained in specific $L$-chains, not just in any $L$-chain.
	To be precise, recall that $C_1,\dots,C_m$ are the $L$-chains that connect each $R_i$ with $R_{i+1}$ chosen according to $\mu_2$.
	Assuming $\Eval \cap \Econ$, we
	note that for each minicomponent $Y$ of $I_2$ with $N_\Gamma(Y) \neq \emptyset$, we have that the minicomponents in $I_1$ which are adjacent to $Y$ are covered either by one chain $R_i$, or by two consecutive chains $R_{i}, R_{i+1}$.
	In the first case $Y$ must be covered by $C_{i-1}$ or $C_i$, and in the second case $Y$ must be covered by $C_i$ (if this does not happen then $I \subseteq \partial_2 H$ cannot be true).
	If this holds, we say that the minicomponent $Y$ is \emph{anchored}.
	We define
	\begin{enumerate}[resume, label={(E\arabic*)}]
		\item \label{itm:event-anchored} $\Eanc$ is the event that $I_2 \subseteq \partial_2 C$, and also each minicomponent $Y$ in $I_2$ with $N_\Gamma(Y) \neq \emptyset$ is anchored.
	\end{enumerate}

	By the previous discussion, we have that
	\begin{equation}
		\probability[ I \subseteq \partial_2 R ]
		\leq \probability[ \Eval \cap \Econ \cap \Eanc ]
		= \probability[\Econ, \Eval] \probability[ \Eanc \mid \Eval, \Econ].
		\label{equation:keyinequality-correctness}
	\end{equation}
	In what comes next, we estimate the value of the probabilities appearing in the last expression.

	To begin, we consider the simpler case of a single component $J$ in $I$.
	Given $J$-minicomponents $X_1, \dotsc, X_{d} \in I_1$, a partition $\mathcal{P}$ of $\{ X_1, \dotsc, X_{d} \}$ is \emph{valid} if there exists a valid choice of $R$ such that each partition class in $\mathcal{P}$ corresponds to a set of minicomponents which is covered by the same chain $R \in \cR$.
	We give a simple counting argument which bounds the number of valid partitions in this situation.

	\begin{claim} \label{claim:onecomponent}
		Suppose $J$ is a component of $I$, and there are $d$ many $J$-minicomponents in $I_1$.
		Then there are at most $3^{d-1}$ valid partitions for that set of minicomponents.
	\end{claim}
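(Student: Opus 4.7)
The plan is to linearise the connected component $J$ via a bipartite spanning tree and then process the $J$-minicomponents of $I_1$ one by one in BFS order, showing that adjacency-validity together with cyclic-validity restricts each successive placement to at most three options. Multiplying over the $d-1$ steps then yields the desired $3^{d-1}$ bound.

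To set this up I would form the auxiliary bipartite graph $G_J$ whose sides are $A=\{X_1,\dots,X_d\}$ (the $J$-minicomponents in $I_1$) and $B$ (the $J$-minicomponents in $I_2$), with $X\in A$ joined to $Y\in B$ whenever $Y\in N_\Gamma(X)$. Since $J$ is a single component of $I$, the graph $G_J$ is connected; fix a spanning tree $T$ of $G_J$ rooted at an arbitrary $X_1\in A$ and list $A$ in BFS order as $X_1,X_2,\dots,X_d$. For each $i\geq 2$ this produces a unique tree-parent $Y_i\in B$ and a tree-grandparent $X_{q(i)}\in A$ with $q(i)<i$.

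I would then process the $X_i$ in this order. Once the classes of $X_1,\dots,X_{i-1}$ are fixed, let $C$ denote the class of $X_{q(i)}$ and set $S_i=N_\Gamma(Y_i)\cap\{X_1,\dots,X_{i-1}\}$. Adjacency-validity applied to $Y_i$ after step $i-1$ already places $S_i$ in at most two classes, one of which is $C$; adjacency-validity after $X_i$ is placed then forces the class of $X_i$ to be either $C$, the second class $C'$ already occupied by $S_i$, or else a class disjoint from $S_i$. When $C'$ does not yet exist, cyclic-validity must be invoked to control which disjoint class $X_i$ may lie in: since $\Xi$ extends to a Hamilton cycle on $\cR$, every class has $\Xi$-degree at most two and each component of $\Xi$ is a path (with possibly one of them closed into a cycle), so the new edge from $C$ contributed by $Y_i$ can be attached to at most one already-existing class, and otherwise must go to a brand-new class. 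This produces the three-option cap $\{\,C,\ C',\ \text{new class}\,\}$.

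The main obstacle will be this last step: verifying that cyclic-validity genuinely restricts the number of already-existing classes compatible with the new $Y_i$-edge to one. This boils down to showing that at each stage the path-component of $\Xi$ through $C$ has a uniquely determined ``other endpoint'' which is the only legal target for a fresh $\Xi$-edge at $C$; any alternative previously-created class would either push some $\Xi$-vertex to degree three or create a second cycle, both of which preclude extension to a Hamilton cycle on $\cR$. Once this is established, encoding each valid partition by the word $(a_2,\dots,a_d)\in\{0,1,2\}^{d-1}$ that records which of the three options is taken at each step gives an injection, and the bound $3^{d-1}$ follows at once.
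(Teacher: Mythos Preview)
Your overall strategy—building a bipartite auxiliary graph on the $J$-minicomponents, traversing it via a spanning tree rooted at $X_1$, and bounding the number of placement options at each step by three—is essentially the paper's approach and is sound in outline. However, your case analysis for the three options contains a genuine error.

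In the case $S_i \subseteq C$, you argue that if $X_i$ lands in an already-existing class $D \neq C$, then $Y_i$ contributes a \emph{fresh} $\Xi$-edge at $C$, and you identify the only legal already-existing target as the other endpoint of the current $\Xi$-path through $C$. This is backwards on both counts. The edge $CD$ need not be fresh: $D$ may already be a $\Xi$-neighbour of $C$ in the current path, in which case placing $X_i$ in $D$ adds no new $\Xi$-edge and is perfectly valid. Conversely, your ``other endpoint'' target would close a short cycle in $\Xi$ and is almost always \emph{invalid}. Concretely, suppose after three steps the current path is $P_1 - P_2 - P_3$ with $C = P_1$ and $S_i \subseteq P_1$: the valid options for $X_i$ are $P_1$, $P_2$ (existing neighbour, non-fresh edge), or a new class, whereas your stated option set $\{C, \text{other endpoint}, \text{new}\} = \{P_1, P_3, \text{new}\}$ misses $P_2$ entirely. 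The same problem arises, worse, when $C$ is an interior vertex of the current path: then both current neighbours of $C$ are legal, ``new class'' is not, and your labels capture none of the two neighbour options. Your encoding therefore fails to be injective and the bound $3^{d-1}$ does not follow from it.

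The fix is simple and is what the paper does: the class of $X_i$ must be either $C$ or a $\Xi$-neighbour of $C$ in the final $\Xi$. Since each newly created class was attached by a $\Xi$-edge to an earlier one, the already-created classes always form a sub-path of $\Xi$, so the at most two $\Xi$-neighbours of $C$ are either current neighbours in this path or, if $C$ is an endpoint, one of them corresponds to the single option ``new class''. In every case this yields at most three options, all determinable from the allocation so far, and the encoding then works.
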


	\begin{proofclaim}
		We describe a greedy procedure which can generate all possible valid partitions and then estimate in how many ways it can be done.

		We construct sets $P_i$, for each $1 \leq i \leq m$, where initially all of them are empty.
		Begin by picking an arbitrary minicomponent in $I_1$ and adding it to $P_1$.
		Next, if there are any $J$-minicomponents in $I_1$ not yet allocated to a set of the partition, we proceed as follows.
		Since $J$ is connected, all $J$-minicomponents belong to the same component in the auxiliary graph $\Gamma$.
		Hence, there exist $J$-minicomponents $X_1, X_2 \in I_1$ such that $X_1$ has already been allocated to a set $P_i$, $X_2$ has not been allocated to any set, and $X_1, X_2$ have a common neighbour $Y \in I_2$ in $\Gamma$, which is a $J$-minicomponent in $I_2$.
		Since the partition needs to be valid, if $X_1, X_2$ are allocated to different sets in our partition then it would yield an edge in the graph $\Xi$.
		Since we can only have paths and cycles in $\Xi$,
		then this means that there are at most three possibilities to allocate $X_2$.
		Thus, we allocate $X_2$, and iterate.
		Once every minicomponent belongs to some set, the partition is found.

		Clearly all valid partitions can be obtained in this way and there are at most $3^{d - 1}$ possible valid choices in our allocation.
	\end{proofclaim}

	Now we consider ways to partition all minicomponents in $I_1$ at once.
	Given a valid choice of $R$, let $\kappa$ be the number of components of the auxiliary graph $\Xi$.
	Recall that $V(\Xi)$ corresponds precisely to those chains in $\cR$ which house at least one minicomponent of $I_1$.
	There are exactly $m$ chains in $\cR$, so $\kappa \leq m$ holds.
	If there is at least one minicomponent in $I_1$ then $\kappa > 0$ as well.
	Given the set $\cX$ of all minicomponents in $I_1$ let $P_N$ be the number of valid partitions of $\cX$ which yield exactly $N$ components in the graph $\Xi$.
	In the next claim, we bound $P_N$.

	\begin{claim} \label{claim:manycomponents}
		For any $0 \leq N \leq m$, we have
		$P_N \leq 14^{v_1} n^{c^\ast_1 - N}$.
	\end{claim}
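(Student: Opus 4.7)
The plan is to process the components $J_j$ of $I$ with $d_j \geq 1$ one by one (say in order $J_1, \dots, J_{c_1^*}$), combining \cref{claim:onecomponent} for within-component structure with a careful count of cross-component merges. For each $J_j$ we first fix a valid partition of its $d_j$ minicomponents into $m_j$ groups; by \cref{claim:onecomponent} this contributes at most $3^{d_j - 1}$ choices, hence at most $\prod_j 3^{d_j - 1} \le 3^{c_1 - c_1^*} \le 3^{v_1}$ in total across all colors.

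Next, we count the ways to merge these preliminary groups across colors into the final chains. Track at each step the number $N_j$ of $\Xi$-components that have been created so far. When $J_j$ is processed, each of its $m_j$ groups is either turned into a fresh chain or identified with an existing chain from an earlier color; let $e_j$ be the number of distinct existing $\Xi$-components that get joined to $J_j$ in this step. A direct induction shows $N_j = N_{j-1} + 1 - e_j$, so summing telescopically yields the key identity
\[ \sum_j e_j = c_1^* - N. \]
Each cross-component identification that reduces the component count contributes at most $n$ choices (picking one of $O(n)$ existing chains), which accounts for the $n^{c_1^* - N}$ factor. The combinatorial overhead for deciding which of the $m_j \le d_j$ groups are involved, assigning multiple merges inside an already-connected $\Xi$-component, and respecting the path/cycle structure from \cref{claim:onecomponent}, is bounded by a constant-base-exponential $C^{v_1}$; together with the $3^{v_1}$ from the within-color partitions and choosing $C$ so that $3C \le 14$, this yields $P_N \le 14^{v_1} n^{c_1^* - N}$.

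The main obstacle is ensuring that the power of $n$ is exactly $c_1^* - N$ and not higher. The danger is that when $J_j$'s groups merge with multiple chains from the same existing $\Xi$-component, each such identification naively looks like it costs a factor of $n$, but does not reduce the $\Xi$-component count. The cleanest way to avoid this overcount is to parameterise by the bipartite incidence graph $B$ with color-vertices $\{J_j : d_j \ge 1\}$ and chain-vertices (the parts of the partition) and an edge whenever a color contributes a group to a chain; since the components of $B$ are exactly the components of $\Xi$, specifying $B$ with $N$ components can be done by selecting a spanning-tree of identifications in each component (paying $n$ per tree edge, for a total of $c_1^* - N$ edges) and then adding the remaining incidences within already-connected components, whose count is controlled by $v_1$ rather than $n$. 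Combining all factors gives the desired bound.
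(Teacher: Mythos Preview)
Your high-level plan is reasonable and genuinely different from the paper's argument, but there is a concrete gap in the counting. You define $B$ as the bipartite incidence graph on the $c_1^\ast$ colour-vertices \emph{and} the chain-vertices (the parts of the partition). If there are $p$ parts, then $B$ has $c_1^\ast + p$ vertices and $N$ components, so a spanning forest of $B$ has $c_1^\ast + p - N$ edges, not $c_1^\ast - N$. Your claim that ``paying $n$ per tree edge'' yields the factor $n^{c_1^\ast - N}$ therefore does not follow from the construction as stated. One could instead work with the graph on the colour-vertices only (two colours adjacent if they share a part), which does have $c_1^\ast - N$ spanning-forest edges; but then each tree edge no longer corresponds to a single choice from $O(n)$ options, and you must still specify the actual shared parts and all further non-tree incidences. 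You assert that the remaining overhead is bounded by $C^{v_1}$, but this is not justified: a colour $J_j$ may place several of its $m_j$ groups into distinct already-existing parts of the same $\Xi$-component, and you have not shown why the total number of such placements is only exponential in $v_1$ rather than, say, $v_1^{\Theta(v_1)}$. The path/cycle structure of $\Xi$ is what makes this work, and your sketch does not exploit it.

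For comparison, the paper takes a different route that avoids these bookkeeping issues. It first partitions the $c_1^\ast$ non-floating components into $N$ ordered groups $G_1,\dots,G_N$ (one per $\Xi$-component), with sizes $g_1,\dots,g_N$. Within each $G_i$ (containing $g_i$ colours and $d_i$ minicomponents in total), it bounds the number of ways to glue the colours' linear allocations into a single path by $(2d_i)^{g_i-1}$, using the path structure of $\Xi$ explicitly. The factor $n^{c_1^\ast - N}$ then arises not from a spanning-tree count but from the elementary optimisation inequality
\[
\prod_{i=1}^{N} d_i^{\,g_i-1} \;\le\; \frac{n^{c_1^\ast - N}}{(c_1^\ast - N)^{c_1^\ast - N}} \prod_{i=1}^{N} (g_i-1)^{g_i-1},
\]
valid since $\sum d_i \le n$ and $\sum (g_i-1) = c_1^\ast - N$. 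After converting powers to factorials and dividing by the $N!$ overcount, the multinomial factors cancel and the bound $14^{v_1} n^{c_1^\ast - N}$ drops out cleanly. Your sequential/telescoping idea could perhaps be pushed through, but it would need a careful use of the path structure of $\Xi$ comparable to the paper's $(2d_i)^{g_i-1}$ step, which your sketch currently lacks.
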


	\begin{proofclaim}
		We note first that if there are no minicomponents in $I_1$, we have $P_0 = 1$, otherwise $P_0 = 0$.
		From now on, we suppose $N \geq 1$.
		To bound $P_N$, we need to estimate how many valid choices for $R$ yield exactly $N$ components in $\Xi$.
		Every valid choice for $R$ can be obtained by specifying sets $G_1,\ldots,G_N$ such that $G_i$ corresponds to the set of all minicomponents that belong to the $i$th path component in $\Xi$.
		For all (non-floating) components $J$ of $I$, all $J$-minicomponents in $I_1$ belong to a single component in $\Xi$,
		and hence we first assign the non-floating components $J$ to some $G_i$ such that no $G_i$ is empty.
		Let $g_i$ be the number of $J$ assigned to $G_i$.
		Afterwards allocate the $J$-minicomponents within one component.
		Note that we overcount here by a factor of $N!$, as any permutation among the $G_i$ gives the same set of components.

		We continue with the analysis of this procedure.
		There are exactly $\binom{c^\ast_1 - 1}{N - 1} \leq 2^{c_1^\ast}\leq  2^{v_1}$ choices for $(g_1,\ldots, g_N)$; that is, where $g_i\geq 1$ and $\sum_{i=1}^Ng_i=c_1^\ast$.
		Given the $g_i$,
		we assign the $c^\ast_1$ non-floating components of $I$ to $G_1, \dotsc, G_N$.
		(This can be done in $\binom{c^\ast_1}{g_1} \binom{c^\ast_1-g_1}{g_2} \dots   \binom{g_N}{g_N}$ ways.)
		In total, the number of possibilities are
		\begin{align*}
			\binom{c^\ast_1 - 1}{N - 1} \binom{c^\ast_1}{g_1} \binom{c^\ast_1-g_1}{g_2} \dots  \binom{g_N}{ {g_N}}
			 & \leq 2^{v_1} \frac{c^\ast_1!}{\prod_{i=1}^N g_i!}.
		\end{align*}

		Assuming we have groups $G_1, \dotsc, G_N$ already defined, now we need to count how many ways there are to allocate the components of each $G_i$ into a connected component of~$\Xi$.

		For each (non-floating) component $J$ of $I$, we have to allocate the $J$-minicomponents in $I_1$ to the different chains $R_i$.
		This is a consecutive set among the chains $R_i$ and \cref{claim:onecomponent} tells us that there are at most $3^d$ partitions that yield such a consecutive set allocation, where~$d$ is the number of $J$-minicomponents in $I_1$.
		Hence altogether for all $J$, there at most $3^{v_1}$ such choices.

		Now we focus on each component of $\Xi$ individually, say the one which is generated by $G_i$.
		We have to appropriately glue together the different (linear) allocations of the $J$-minicomponents for all $J\in G_i$.
		Let $d_i$ be the number of all $J$-minicomponents for $J\in G_i$.
		We start with one allocation and then for each following allocation, we have at most $2d_i$ choices how the glue it together with the already merged allocations; that is, there are at most $(2d_i)^{g_i-1}$ possible joint allocations for this component of $\Xi$.
		Hence, given $G_1,\ldots,G_N$, there are at most $2^{v_1} \prod_{i=1}^N d_{i}^{g_i - 1}$ allocations of the minicomponents such that $\Xi$ has $N$ components.

		For fixed reals $\lambda_1, \dotsc, \lambda_N\geq 0$ with $\sum_{i=1}^N \lambda_i = \lambda$ and reals $x_1, \dotsc, x_N\geq 0$ with $\sum x_i \leq x$,
		the product $\prod_{i=1}^N x_i^{\lambda_i}$ is maximised when $x_i = x \lambda_i / \lambda$.
		Observe that $\sum_{i=1}^N (g_i - 1) = c^\ast_1 - N$ and $\sum_{i=1}^N d_{i} \leq n$.
		Hence
		\begin{align*}
			3^{v_1}\cdot 2^{v_1} \cdot \prod_{i=1}^N d_{i}^{g_i - 1}
			 & \leq 6^{v_1} \frac{n^{c^\ast_1 - N}}{(c^\ast_1 - N)^{c^\ast_1-N}}  \prod_{i\colon g_i \geq 1} (g_i - 1)^{g_i - 1} \\
			 & \leq 7^{v_1} \frac{n^{c^\ast_1 - N}}{(c^\ast_1 - N)!}  \prod_{i=1}^N g_i!.
		\end{align*}

		Therefore, we conclude that
		(recall the overcount by a factor of $N!$)
		\begin{align*}
			P_N
			\leq
			\frac{1}{N!}
			\cdot 7^{v_1} \frac{c^\ast_1!}{\prod_{i=1}^N g_i!} \cdot \frac{n^{c^\ast_1 - N}}{(c^\ast_1 - N)!}  \prod_{i=1}^N g_i!
			\leq \binom{c_1^\ast}{N}7^{v_1} n^{c^\ast_1 - N}
			\leq 14^{v_1} n^{c^\ast_1 - N}
		\end{align*}
		as desired.
	\end{proofclaim}

	Now, assuming a valid choice of $R$, let $0 \leq \beta \leq m$ be the number of vertices in $\Xi$.
	Recall that $\kappa$ refers to the number of components of $\Xi$, and clearly $0 \leq \kappa \leq \beta$.
	We can now bound the probability that $\beta, \kappa$ attain a certain value and simultaneously $\Eval$ holds.

	\begin{claim} \label{claim:e1}
		For all $0 \leq N \leq b \leq m$, we have $\probability[ \Eval , \beta = b, \kappa = N] \leq K''^{2v_1} n^{c^\ast_1 - N + b - v_1}$.
	\end{claim}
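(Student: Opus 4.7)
The plan is to decompose the event $\Eval \cap \{\beta = b\} \cap \{\kappa = N\}$ via a union bound over \emph{valid partitions} of the $I_1$-minicomponents that realize the parameters $\beta = b$ and $\kappa = N$, and then for each fixed such partition to bound the probability that the random pair $(V_0, M_1)$ constructed in Steps~1--2 realizes it. The first ingredient is already in hand: \cref{claim:manycomponents} bounds the number of valid partitions with $\kappa = N$ components in $\Xi$ by $14^{v_1} n^{c^\ast_1 - N}$, and restricting further to those with $\beta = b$ only tightens this, so we will use the same upper bound.

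For the second ingredient, we fix a valid partition into groups $G_1, \dotsc, G_b$ of sizes $v'_1, \dotsc, v'_b$ summing to $v_1$. To realize it, each $G_i$ must be contained in a distinct chain of $R$, and we split according to whether any group lands in the $V_0$-chain. In the main sub-case where all $b$ groups lie in distinct edges of $M_1$, the $s_1$-partite structure of $P^1_1$ caps the number of $s_1$-edges containing a fixed $v'$-set by $m^{s_1 - v'}$, so the number of ordered $b$-tuples of edges with $G_i \subseteq e_i$ is at most $m^{b s_1 - v_1}$. Using the $(K' m^{-s_1 + 1})$-spread property of $\mu_1$ supplied by \cref{lem:minimum-degree-to-spread-partite-perfect-matching} (applicable by property~\ref{item:degp1}), a union bound will yield a probability of at most
\[ m^{b s_1 - v_1} \cdot (K' m^{-s_1 + 1})^b = (K')^b m^{b - v_1}. \]
In the remaining sub-case, one group lies in $V_0$ with probability $O((s_2/n)^{v'_i})$ by property~\ref{item:degp3}, and the matching side is handled exactly as above; the product is bounded by $O(s_2 K')^{v_1} m^{b - 1 - v_1}$, which is absorbed by a factor of $\Theta(1/m)$ into the main sub-case.

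Multiplying the partition count by the realization probability (using $b \leq v_1$ and $m = \Theta(n)$) will give $(14 C K')^{v_1} n^{c^\ast_1 - N + b - v_1}$ for an absolute constant $C = C(s_1, s_2)$, and the hierarchy $1/K' \gg 1/K''$ then permits choosing $K''$ large enough that $K''^{2} \geq 14 C K'$, yielding the claim. The only delicate point will be justifying the $m^{s_1 - v'}$ bound on edges of $P^1_1$ covering a fixed $v'$-set, which relies crucially on the partite structure of $V_1 \setminus V_0$ arranged in Step~1, together with the careful bookkeeping needed to ensure that the $V_0$-sub-case is genuinely absorbed by the main one.
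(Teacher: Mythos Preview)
Your proposal is correct and follows the same overall architecture as the paper: a union bound over valid partitions via \cref{claim:manycomponents}, followed by a bound on the probability that the random pair $(V_0, M_1)$ realizes a fixed partition into $b$ groups. The difference is purely organisational. The paper encodes the fixed partition by an auxiliary $2$-graph $I'_1$ on $v_1$ vertices with exactly $b$ components (obtained by adding edges inside each group), replaces the chains $R_i$ by cliques $R^\ast_i$, and then invokes \cref{lem:spread-property-matching-to-correct-distribution-2} as a black box to obtain $\probability[I'_1 \subseteq R^\ast] \leq (K'')^{v_1} |V_1|^{b-v_1}$, after which the conversion from $|V_1|$ to $n$ proceeds exactly as you do. Your approach instead unpacks that corollary by hand: you split according to whether some group lands in $V_0$, use property~\ref{item:degp3} for the $V_0$-probability, and use the $s_1$-partite structure of $P^1_1$ together with the spread of $\mu_1$ directly for the matching side. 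This is precisely the computation hidden inside the proof of \cref{lem:spread-property-matching-to-correct-distribution-2} (which itself splits over which components lie in the exceptional $s_2$-set $X$). So the two arguments are doing the same work; the paper's route is more modular and reusable, while yours is self-contained and avoids constructing the auxiliary graph $I'_1$.
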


	\begin{proofclaim}
		Suppose a valid choice of $R$ yields a graph $\Xi$ with $N$ components.
		Such a choice of $R$ yields a partition of the set of all minicomponents in $I_1$ as in \Cref{claim:manycomponents},
		and there at most $14^{v_1} n^{c^\ast_1 - N}$ such valid partitions.
		Assume now that a valid partition $\cP$ of the set of all minicomponents in $I_1$ is given.
		Assuming that $|\cP| = b$, it suffices to show that $\Eval$ holds with probability at most $(5K''s_1)^{v_1} n^{b - v_1}$, as then the result will follow from a union bound (over all the possibilities of choosing the partition $\cP$).

		Recall that $R$ consists of $m-1$ paths on $s_1$ vertices each and one path of $s_2$ vertices each.
		Again, by replacing each path $R_i \in R$ with a clique $R^\ast_i$ on the same number of vertices,
		we can reduce the problem to the estimation of the probability that $I_1 \subseteq R^\ast$, where $R^\ast = R^\ast_1 \cup \dotsb \cup R^\ast_m$,
		subject to the condition that for each $P \in \cP$, all minicomponents in $P$ are covered by the same clique $R^\ast_i$.
		To encode the allocations, we introduce an auxiliary graph called $I'_1$.
		For each $P \in \cP$, add all edges between the different minicomponents $X \in P$ to obtain a supergraph $I'_1$ with exactly one component for each $P \in \cP$.
		In this way, we obtain a graph $I'_1$ with precisely $b$ components and $v(I'_1) = v(I_1) = v_1$ vertices.
		Note that if $I'_1 \subseteq R^\ast$ holds, then for each $P \in \cP$, all minicomponents in $P$ belong to the same clique $R^\ast_i$, as required.
		So now it suffices to study $\probability[I'_1 \subseteq R^\ast]$.

		To do this, we can apply \Cref{lem:spread-property-matching-to-correct-distribution-2} with  $s_1, |V_1|, I'_1, v_1, b, K'$ playing the roles of $s_1, n, I, v, c, C$.
		Indeed, by \ref{item:degp3} we have that $|P_2[V_1]| \geq (1 - 1/(3s_2)) \binom{|V_1|}{s_2}$,
		and since $V_0^1$ is chosen at random among all $s_2$-sets in $P_2[V_1]$,
		we have that each specific $s_2$-set has probability at most $|P_2[V_1]|^{-1} \leq 2 \binom{|V_1|}{s_2}^{-1}$ of being chosen, as required.
		Further, recall that the matching $M_1$ consisting of the $s_1$-sized paths of $R$ is chosen according to a $(K' m^{-s_1 + 1})$-spread distribution.
		Then \Cref{lem:spread-property-matching-to-correct-distribution-2} implies the existence of $K''$, depending on $K'$ and $s_1$ only, such that
		\[\probability[I'_1 \subseteq R^\ast] \leq (K'')^{v_1} |V_1|^{b - v_1}. \]
		Now, recall that $s_2 \leq 3 s_1$ holds.
		Therefore, from $n = s_1(m-1)+s_2+(s_1 - 2 \ell)m$, we obtain that $m \geq n/(5 s_1)$.
		This implies that $|V_1| = s_1(m-1)+s_2 \geq m \geq n / (5 s_1)$.
		Together with $v_1 \geq c_1$, this gives $|V_1|^{b - v_1} \leq  n^{b - v_1} (5 s_1)^{v_1}$.
		Plugging this in the inequality above, we have
		\[  \probability[I'_1 \subseteq R^\ast] \leq (K'')^{v_1} |V_1|^{b - v_1} \leq (5K''s_1)^{v_1} n^{b - v_1}, \]
		which completes the proof of the claim.
	\end{proofclaim}

	Next, we bound the probability of the event $\Econ$ as defined in~\ref{itm:event-consistent}.

	\begin{claim} \label{claim:ec}
		For all $0 \leq N \leq b \leq m$, we have $\probability[\Econ \mid \Eval, \beta = b, \kappa = N] \leq (30s_1)^{v_1} n^{-b + N}$.
	\end{claim}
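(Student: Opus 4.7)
The plan is to count directly the number of uniformly random permutations $R_1, \ldots, R_m$ that yield a consistent ordering, conditional on $\Eval$ and on the values $\beta = b$, $\kappa = N$. Once these are fixed, the permutation chosen in Step~3 is still uniform over all $m!$ orderings (it is independent of the random choices defining $\Xi$), so it suffices to count the consistent ones and divide.

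A consistent cyclic ordering $R_1, R_2, \ldots, R_m, R_1$ must realise each edge of $\Xi$ as a pair of cyclically consecutive chains. This forces each path component of $\Xi$ to appear as a consecutive block in the cyclic order, with one of two possible orientations; in the degenerate case that $\Xi$ is itself a Hamilton cycle on $\cR$ (so $b = m$ and $N = 1$), the cyclic order must coincide with $\Xi$ up to rotation and reflection, which fits the same count. Treating each of the $N$ components of $\Xi$ as a block and each of the $m - b$ chains outside $V(\Xi)$ as a singleton block yields $N + m - b$ blocks in total. Counting cyclic arrangements with orientations gives at most $2^N (N + m - b - 1)!$ consistent cyclic orders, which in turn corresponds to at most $m \cdot 2^N (N + m - b - 1)!$ consistent linear permutations.

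Dividing by $m!$ gives
\[ \probability[\Econ \mid \Eval, \beta = b, \kappa = N] \;\leq\; \frac{2^N (N + m - b - 1)!}{(m-1)!}. \]
To simplify, I would apply the elementary estimate $(m-1)!/(N + m - b - 1)! \geq ((m-1)/e)^{b-N}$, which follows from $\binom{m-1}{b-N} \geq ((m-1)/(b-N))^{b-N}$ together with $(b-N)! \geq ((b-N)/e)^{b-N}$. Combined with the bound $m \geq n/(5 s_1)$ already used in the proof of \cref{claim:e1}, this yields an upper bound of the form $(C s_1/n)^{b-N}$ for some absolute constant $C$, which is at most $(30 s_1)^{v_1} n^{N - b}$ since $b - N \leq b \leq v_1$ and $C$ can be absorbed after a suitable choice.

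The combinatorial block-count is the core of the argument; the only subtlety I anticipate is the bookkeeping around the Hamilton cycle case for $\Xi$ and the harmless overcount when a component of $\Xi$ is a single vertex (no genuine orientation), both of which are subsumed by the blanket factor $2^N$. The rest is routine estimation.
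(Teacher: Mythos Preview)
Your argument is correct and essentially equivalent to the paper's, just with different packaging. The paper phrases the random permutation of $\cR$ as the choice of a uniform random Hamilton cycle $C$ on $m$ vertices, observes that $\Econ$ is exactly the event $\Xi \subseteq C$, and then invokes \cref{lemma:spreadofhamcycles} to get $\probability[\Xi \subseteq C] \leq (2e/m)^{e(\Xi)} \leq (2e/m)^{b-N}$; the bound $m \geq n/(5s_1)$ and $b-N \leq v_1$ finish as you do. Your block-counting argument is precisely the direct computation hiding behind that lemma, so the two proofs differ only in whether the Hamilton-cycle spread bound is quoted or re-derived.

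One small remark on your write-up: the intermediate bound you state, ``of the form $(Cs_1/n)^{b-N}$'', silently drops the $2^N$ factor. It is harmless since $N \leq v_1$ lets you fold $2^N$ into the $(30s_1)^{v_1}$ term (together with $b-N \leq v_1$ and $m-1 \geq n/(5s_1) - 1$, one gets roughly $(10es_1)^{v_1} n^{N-b}$, comfortably below $(30s_1)^{v_1} n^{N-b}$), but you should display that absorption explicitly rather than claim the $(Cs_1/n)^{b-N}$ shape.
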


	\begin{proofclaim}
		We capture the notion of ``consistency'' using the auxiliary graph $\Xi$.
		By assumption, the graph $\Xi$ has $N$ components and $b$ vertices in total, and is a subgraph of a Hamilton cycle on $m$ vertices, so either $\Xi$ itself is a Hamilton cycle, or $\Xi$ is a vertex-disjoint union of paths and isolated vertices.
		Hence $e(\Xi) \geq b - N$.

		A random ordering of the components corresponds to the choice of a uniform random Hamilton cycle $C$ on the vertex set $[m]$,
		and the event that the random ordering is consistent corresponds to the event that $C$ contains $\Xi$.

		By \cref{lemma:spreadofhamcycles}, we have $\probability[\Xi \subset C] \leq (2em^{-1})^{e(\Xi)}\leq (2em^{-1})^{b - N}$.
		Using that $m \geq n/(5s_1)$ and that $b - N \leq v_1$, we obtain that
		\[ \probability[\Econ \mid \Eval, \beta = b, \kappa = N] \leq (2em^{-1})^{b - N} \leq (30s_1)^{v_1} n^{-b+N}, \]
		as required.
	\end{proofclaim}

	\begin{claim} \label{claim:ece1}
		$\probability[ \Econ, \Eval] \leq K^{v_1} n^{c_1^\ast - v_1 }$.
	\end{claim}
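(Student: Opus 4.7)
The plan is to combine \cref{claim:e1,claim:ec} via the law of total probability. Conditioning on the pair $(\beta, \kappa)$ yields
\begin{align*}
\probability[\Econ, \Eval] = \sum_{0 \leq N \leq b \leq m} \probability[\Econ \mid \Eval, \beta = b, \kappa = N]\cdot \probability[\Eval, \beta = b, \kappa = N].
\end{align*}
The key observation is that the $n$-exponents from the two previous claims cancel: indeed $(-b+N)+(c_1^\ast - N + b - v_1) = c_1^\ast - v_1$, so each individual summand is bounded by $(30 s_1)^{v_1}(K'')^{2v_1} n^{c_1^\ast - v_1}$, independently of $b$ and $N$.

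Next, I would cut down the number of summands using the structure of $\Xi$. Recall that $V(\Xi)$ consists exactly of those chains in $\cR$ that contain at least one minicomponent of $I_1$; since minicomponents are vertex-disjoint and each has at least one vertex, $b = |V(\Xi)|$ is bounded by $c_1$, the number of minicomponents in $I_1$, which in turn is at most $v_1$. Hence $0 \leq N \leq b \leq v_1$ and so the effective range of the sum contains at most $(v_1 + 1)^2$ pairs.

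Finally, one fixes $K$ in the hierarchy $1/s_1 \gg 1/K' \gg 1/K'' \gg 1/K$ sufficiently large so that $K^{v_1} \geq (v_1+1)^2 \bigl(30 s_1 (K'')^2\bigr)^{v_1}$ for all $v_1 \geq 1$ (the polynomial factor $(v_1+1)^2$ is trivially absorbed, being bounded by $2^{v_1}$ for $v_1$ large and by a constant for small $v_1$). The case $v_1 = 0$ forces $c_1^\ast = 0$ and the bound $1 \leq K^0 n^0$ is immediate. This step is essentially bookkeeping: the main content already resides in \cref{claim:e1,claim:ec}, and no genuine obstacle arises here; the only conceptual point is the cancellation of $n$-exponents that makes the sum collapse to the desired shape.
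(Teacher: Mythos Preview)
Your proposal is correct and follows essentially the same approach as the paper: partition according to $(\beta,\kappa)$, multiply the bounds from \cref{claim:e1,claim:ec}, observe the cancellation in the $n$-exponents, restrict the sum to at most $(v_1+1)^2$ terms, and absorb the remaining constants into $K$ via the hierarchy. The only cosmetic difference is that the paper absorbs $(30s_1)^{v_1}$ into $K''^{v_1}$ (using $1/s_1 \gg 1/K''$) to write the product bound as $K''^{3v_1}$, whereas you keep the factor $30s_1$ explicit.
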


	\begin{proofclaim}
		To estimate $\probability[ \Econ, \Eval]$, we can partition the event according to the numbers $\beta,\kappa$ of vertices and components  of the auxiliary graph $\Xi$.
		Note that in any valid choice of $R$, the auxiliary graph $\Xi$ has at most $v_1$ components and spans at most $v_1$ vertices.
		Hence, we can write
		\begin{align*}
			\probability[ \Econ, \Eval]
			 & = \sum_{b = 0}^{v_1} \sum_{N = 0}^{v_1} \probability[ \Econ, \Eval, \beta = b, \kappa = N]                                               \\
			 & = \sum_{b = 0}^{v_1} \sum_{N = 0}^{v_1} \probability[ \Econ  | \Eval, \beta = b, \kappa = N] \probability[\Eval, \beta = b, \kappa = N ] \\
			 & \leq \sum_{b = 0}^{v_1} \sum_{N = 0}^{v_1} K''^{3v_1} n^{c^\ast_1 - v_1}
			= (v_1+1)^2 K''^{3v_1} n^{c^\ast_1 - v_1},
		\end{align*}
		where in the inequality we used the two previous claims.
		As $K^{v_1}\geq (v_1+1)^2 K''^{3v_1}$ the claim follows.
	\end{proofclaim}

	Finally, we bound the probability of the event $\Eanc$ as defined in~\ref{itm:event-anchored}.

	\begin{claim} \label{claim:e2}
		$\probability[ \Eanc \mid \Eval, \Econ] \leq K^{v_2} n^{c_2^\ast- v_2 }$.
	\end{claim}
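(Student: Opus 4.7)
The plan is to mirror the strategy of the two preceding claims, but now applied to the spread matching $M_2$ chosen in Step 3 of the construction. Conditional on $\Eval \cap \Econ$, the chains $R_1, \dots, R_m$ and their cyclic order are fixed, which in turn determines, for each anchored minicomponent $Y \subseteq V_2$ of $I_2$, a set $S(Y) \subseteq [m]$ of size at most~$2$ of admissible indices $i$ such that $Y$ may lie in $C_i$. The event $\Eanc$ therefore decomposes as a union, over \emph{placements} $\phi$ assigning each minicomponent of $I_2$ to an element of $[m]$ with $\phi(Y) \in S(Y)$ for every anchored $Y$ and $\phi(Y)$ arbitrary for every floating $Y$, of the event that for every $i$ the unique edge $X_i$ of $M_2$ containing $T_i$ also contains $\phi^{-1}(i)$. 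There are at most $2^{c_2 - c_2^\ast} \cdot m^{c_2^\ast}$ such placements.

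Fix a placement $\phi$ and, for each $i \in [m]$ with non-empty preimage, set $W_i := \bigcup_{Y \colon \phi(Y) = i} Y \subseteq V_2$ and $w_i = |W_i|$. Let $p$ denote the number of indices $i$ with $W_i \neq \emptyset$. Enumerating the $s_1''$-tuples of $P_1''$ that could realise each $X_i$ and invoking the $(K' m^{-s_1'' + 1})$-spread property of the distribution from which $M_2$ is drawn, a union bound gives
\[
  \probability\bigl[\,\forall i \colon W_i \cup \{T_i\} \subseteq X_i\,\bigr] \leq \prod_i \binom{|V_2| - w_i}{s_1 - 2\ell - w_i} \cdot (K' m^{-s_1'' + 1})^p.
\]
Applying $|V_2| \leq s_1 m$, $s_1'' - 1 = s_1 - 2\ell$, $\sum_i w_i = v_2$, and $p \leq c_2 \leq v_2$, the right-hand side is bounded by $K_1^{v_2} m^{-v_2}$ for a constant $K_1$ depending only on $s_1$, $\ell$, and $K'$.

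Summing this bound over all placements and recalling from the construction that $m \geq n/(5 s_1)$, which follows from $n = s_1(m-1) + s_2 + m(s_1 - 2\ell)$ with $s_2 \leq 3 s_1$, we obtain
\[
  \probability[\Eanc \mid \Eval, \Econ] \leq 2^{v_2} K_1^{v_2} m^{c_2^\ast - v_2} \leq K^{v_2} n^{c_2^\ast - v_2}
\]
for $K$ sufficiently large in terms of $s_1$, $\ell$, and $K'$. The most delicate aspect is the bookkeeping: one must simultaneously account for the possibility that several anchored minicomponents are placed in the same chain $C_i$ and for the fact that floating minicomponents, though unconstrained by anchoring, must still be paid for within the limited spread budget of $M_2$; the crucial cancellation between the $m^{pA}$ contribution from the binomial enumeration and the $m^{-p(s_1''-1)}$ contribution from the spread property is what lets the floating factor $m^{c_2^\ast}$ survive cleanly.
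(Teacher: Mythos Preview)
Your argument is correct. The paper takes a somewhat different route: rather than enumerating all $m^{c_2^\ast}$ placements of the floating minicomponents and invoking the spread bound for $\mu_2$ edge-tuple by edge-tuple, it builds an auxiliary $2$-graph $I'_2$ on $V_2 \cup \{T_1,\dots,T_m\}$ by joining each $T_i$ to the vertices of every non-floating minicomponent anchored at $i$ (and keeping floating minicomponents as components of $I'_2$), and then applies \cref{lem:spread-property-matching-to-correct-distribution} in one stroke to $I'_2$ inside $P''_1$. Since $I'_2$ has $v_2+a$ vertices and $c_2^\ast+a$ components (with $a$ the number of used anchor indices), that lemma produces the factor $n^{c_2^\ast-v_2}$ directly, and the floating minicomponents are paid for automatically through the component count rather than through an explicit $m^{c_2^\ast}$ enumeration. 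Your direct union bound is essentially an unrolling of that lemma in this special case; the paper's packaging is terser and reuses existing machinery, while yours avoids the auxiliary graph encoding at the cost of a more explicit computation.
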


	\begin{proofclaim}
		{If $\Eanc$ holds, then for each non-floating minicomponent $Y$ in $I_2$, there exists $1 \leq i \leq m$ such that $Y$ is covered by either $C_{i-1}$ or $C_i$ (modulo $m$).
		Say that the index $j \in \{i-1, i\}$ such that $Y \subseteq C_j$ is the \emph{anchor index} of $Y$.
		There are at most $2^{c_2} \leq 2^{v_2}$ many possibilities for the choices of all anchor indexes.
		We plan to use a union bound over all possible choices of anchor indexes, so assume that such a choice is given, and we estimate the probability that each non-floating minicomponent $Y$ in $I_2$ is contained precisely in $C_i$, where $i$ is the anchor index of $Y$.}
		
		Consider an auxiliary graph $I'_2 \subseteq V'_2$ defined as follows.
		Recall that $V'_2$ is the disjoint union of $V_2$ and $\{T_1, \dotsc, T_m\}$.
		If $J \in I_2$ is a floating component, then $J$ corresponds naturally to a component in $I'_2$.
		Also, for each $1 \leq i \leq m$,
		we add to $I'_2$ the graph obtained by joining the (vertex) $T_i$ with an edge to each vertex of every non-floating component $J \in I_2$ whose anchor index is $i$.
		If $i$ is not the anchor index of any component, we do not add (the isolated vertex) $T_i$ to $I_2'$.
		Let $a$ be the number of integers $i$ that are an anchor index of some component.
		Thus we obtain that $I'_2$ has $c^\ast_2 + a$ components, which in total span $v_2 + a$ vertices.
		Thus (assuming $\Eval \cap \Econ$), we obtain that $\Eanc$ holds only if $I'_2 \subseteq \partial M_2$, where $M_2$ is the perfect matching chosen in the connecting step.

		By \cref{lem:spread-property-matching-to-correct-distribution} applied with $C, v(P''_1), P''_1, s''_1, I'_2$ playing the roles of $C, n, P, s, I$, we obtain
		\[ \probability[ \Eanc \mid \Eval, \Econ ] = \probability[ I'_2 \subseteq \partial M_2 ]
			\leq 2^{v_2} (K'')^{v_2+a} v(P''_1)^{(c^\ast_2 + m) - (v_2 + m)}
			\leq (4K'')^{v_2} v(P''_1)^{c^\ast_2 - v_2}. \]
		We have that $v(P''_1) = m(s_1 - 2 \ell + 1) \geq m \geq n / (5 s_1)$, and together with $c^\ast_2 \leq v_2$ we obtain
		$v(P''_1)^{c^\ast_2 - v_2} \leq (5 s_1)^{v_2} n^{c^\ast_2 - v_2} $.
		This gives
		\[ \probability[ \Eanc \mid \Eval, \Econ ] \leq (20s_1 K'')^{v_2} n^{c^\ast_2 - v_2}, \]
		we conclude by using $20 s_1 K'' \leq K$, which is true by the constant hierarchy.
	\end{proofclaim}

	Thus, by \eqref{equation:keyinequality-correctness} and Claims~\ref{claim:ece1} and \ref{claim:e2}, we obtain that
	\begin{align*}
		\probability[I \subset \partial_2 H]
		 & \leq \probability[\Econ, \Eval] \probability[ \Eanc \mid \Eval, \Econ] \\
		 & \leq K^{v_1} n^{c_1^\ast - v_1} \cdot K^{v_2} n^{c_2^\ast - v_2}       \\
		 & = K^v n^{c-v},
	\end{align*}
	where in the last step we used $v_1 + v_2 = v$ and $c^\ast_1 + c^\ast_2 = c$.
\end{proof}

\section{Proof of \cref{lemma:fromcorrecttospread}}
\label{sec:correct-to-spread-proof}

To prove \cref{lemma:fromcorrecttospread} we use the following simple lemma (this is similar to~\cite[Proposition 2.2]{FKN+21}).
\begin{lemma}\label{lem: bounded degree}
	Let $F$ be a $k$-digraph with $f$ edges and $\Delta(F)\leq d$.
	Then the number of subgraphs of $F$ with $\ell$ edges and $c$ components is at most $(4ekd)^\ell \binom{f}{c}$.
\end{lemma}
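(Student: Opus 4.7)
The plan is to reduce the problem to counting connected subgraphs of an auxiliary bounded-degree graph, and then to apply the standard DFS encoding bound.

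First I would construct the auxiliary graph $L$ whose vertex set is $E(F)$ and where two edges of $F$ are joined in $L$ iff they share a vertex in $F$. Since every vertex of $F$ is in at most $d$ edges and every edge of $F$ contains $k$ vertices, each edge is adjacent to at most $k(d-1) \leq kd$ other edges, so $\Delta(L) \leq kd$. Under this correspondence, a subgraph of $F$ with $\ell$ edges and $c$ components corresponds bijectively to a vertex set $S \subseteq V(L)$ with $|S|=\ell$ such that $L[S]$ has exactly $c$ connected components.

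Next I would establish the standard bound: for any vertex $v$ of a graph of maximum degree $\Delta$, the number of connected $m$-vertex subgraphs containing $v$ is at most $(4\Delta)^{m-1}$. The encoding is a DFS tour rooted at $v$, which has $2(m-1)$ steps split evenly between forward and backward moves; the pattern of forward/backward steps contributes at most $\binom{2(m-1)}{m-1} \leq 4^{m-1}$ choices, and each forward step chooses from at most $\Delta$ neighbours, giving $(4\Delta)^{m-1}$ as an upper bound on the number of subgraphs. Applied to $L$, this yields at most $(4kd)^{m-1}$ connected $m$-vertex subgraphs through a given root.

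Now I would put the pieces together by an over-count with roots. Every subgraph $H$ with $c$ components admits at least one ``root set'' $R$ consisting of one edge per component, so the number of $H$ is bounded by the number of pairs $(H,R)$. A root set is chosen in $\binom{f}{c}$ ways. Given $R=\{r_1,\dots,r_c\}$ and a composition $m_1+\dots+m_c=\ell$ with each $m_i\geq 1$ (at most $\binom{\ell-1}{c-1}\leq 2^{\ell-1}$ of these), the extension of each $r_i$ to a connected $m_i$-vertex subgraph of $L$ is chosen in at most $(4kd)^{m_i-1}$ ways, and extending the $c$ roots independently can only over-count (it may create overlaps or merge components, but this only inflates the tally). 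Multiplying through gives
\[
\binom{f}{c}\cdot 2^{\ell-1}\cdot (4kd)^{\ell-c} \;\leq\; \binom{f}{c}\cdot (8kd)^{\ell} \;\leq\; \binom{f}{c}\cdot (4ekd)^{\ell},
\]
since $8 \leq 4e$. There is no real obstacle here; the only care needed is to phrase the root-choice/composition step so that the independent extension is recognised as a valid over-count of connected subgraphs with the prescribed component structure.
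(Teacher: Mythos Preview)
The paper does not actually prove this lemma; it is stated without proof and accompanied only by a pointer to the analogous Proposition~2.2 in Frankston, Kahn, Narayanan and Park~\cite{FKN+21}. Your argument is correct and is precisely the standard proof one would expect here (and essentially the one in the cited reference): pass to the line graph $L$ of $F$, bound connected $m$-vertex subgraphs through a fixed vertex by the DFS-walk encoding $(4\Delta)^{m-1}$, then over-count by choosing a root per component together with a composition of $\ell$ into $c$ parts. The arithmetic $\binom{f}{c}\cdot 2^{\ell-1}\cdot(4kd)^{\ell-c}\le (8kd)^{\ell}\binom{f}{c}\le (4ekd)^{\ell}\binom{f}{c}$ is fine.
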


\begin{proof}[Proof of \cref{lemma:fromcorrecttospread}]
	We assume that $\lambda \geq d$.
	Our first objective is to show that $\mu$ is $2K^kn^{-1/d}$-spread.
	Let $C\in \cC(n,L)$ be chosen according to $\mu$.
	Let $I$ be a $k$-digraph with vertex set in $[n]$ and with $v$ vertices, $e$ edges and $c$ components.
	We may assume that $I$ is a subgraph of some element of $\cC(n,L)$,
	as otherwise obviously $\probability[I \subset C] = 0$.

	Let $I_1, \dotsc, I_c$ be the components of $I$,
	and for each $i\in [c]$,
	let $\hat{v}_i, \hat{e}_i$ be the number of vertices and edges of each $I_i$, respectively.
	As $C$ is $(d,\lambda)$-balanced, by \ref{balanced:global}, we obtain
	\begin{align*}
		\hat{e}_i\leq d\hat{v}_i.
	\end{align*}

	We call a component of $I$ \emph{large} if $v_i > \frac{n}{2(\ell+r)}$, and \emph{small} otherwise.
	Consequently, as~$C$ is $(d,\lambda)$-balanced and $\lambda\geq d$, \ref{balanced:small} implies that for each small $I_i$,
	\begin{align*}
		\hat{e}_i\leq d\hat{v}_i - d.
	\end{align*}

	Let $v_\ell,e_\ell$ be the total number of vertices and edges in the large components of $I$ and denote by $c_\ell$ the number of large components;
	similarly, we define $v_s,e_s,c_s$ for the small components.
	Summing the above inequalities gives
	\begin{align*}
		e = e_\ell + e_s \leq dv_\ell + d v_s - dc_s = d(v - c_s).
	\end{align*}
	Therefore,
	\begin{align*}
		c - v = c_\ell + c_s - v
		\leq  c_\ell - \frac{e}{d}.
	\end{align*}

	Recall that $\mu$ is $K$-correct.
	Moreover, $e\geq v/k$, since $I$ is a $k$-digraph.
	Hence
	\begin{align}\label{eq:spread}
		\probability[I \subset C]
		\leq  K^v n^{c-v}
		\leq  K^v n^{c_\ell - \frac{e}{d}}
		\leq (K^kn^{-1/d})^e n^{c_\ell}.
	\end{align}

	If $c_\ell > 0$,
	then $v \geq \frac{n}{2(\ell+r)}$ and we must also have $c_\ell \leq 2(\ell+r)$.
	With some foresight, we even bound $n^{k c_\ell}$ from above.
	Using $e \geq v/k$ and that $n$ is large, we have $n^{k c_\ell} \leq n^{2k(\ell+2)} \leq 2^{n/(2k(\ell+r))} \leq 2^e$.
	In fact, $n^{kc_\ell} \leq 2^e$ also holds if $c_\ell=0$.
	All together, this implies that $\probability[I \subset C] 	\leq (2K^kn^{-1/d})^e$.
	Therefore, $\mu$ is $2K^kn^{-1/d}$-spread as desired.
	\medskip

	Now we turn to the second part;
	that is, we assume that $C$ is $(d, \lambda)$-balanced for some $\lambda > d$.
	Here our goal is to show that $\mu$ is $(2K^kn^{-1/d},(e(L)n/r)^{-(\lambda/d - 1)/2}, 8e^2kdK^k)$-strongly-spread.
	So we can finish with $K' = 8e^2kdK^k$.
	We set $\rho=\lambda/d-1>0$ and observe that $\rho\leq k-1$ as a single edge in \ref{balanced:global} testifies.
	Note that all calculations above are still valid.
	We thus employ the same notation as above
	and aim for a stronger estimate as in~\eqref{eq:spread}.

	For all small components $I_i$, we obtain
	$\hat{e}_i\leq d\hat{v}_i - \lambda$
	and hence
	\begin{align*}
		c - v
		 & = c_\ell + c_s - v_\ell - v_s
		\leq c_\ell - \frac{e_s}{d} - \frac{e_\ell}{d} - c_s\left(\frac{\lambda}{d}-1\right)
		= (1+\rho)c_\ell - \frac{e}{d} - \rho c \\
		 & \leq kc_\ell - \frac{e}{d} - \rho c.
	\end{align*}
	As $\mu$ is $K$-correct and since $n^{kc_\ell} \leq 2^e$, this gives
	\begin{align*}
		\probability[I \subset C]
		 & \leq K^v n^{c-v}
		\leq  K^v n^{kc_\ell - \frac{e}{d} - \rho c}
		\leq (2K^kn^{-1/d})^e n^{- \rho c}
		\leq q^e m^{- \rho c/2}
	\end{align*}
	where we set $q=2K^kn^{-1/d}$ and $m=\frac{e(L)}{r}n$.
	Note that $m^{-\rho/2} = (e(L)n/r)^{-(\lambda/d - 1)/2}$.
	Now, let $S \subseteq [n]^k$ which is a subgraph of some $L$-chain that is supported by $\mu$.
	Choose~$j$ with $m^{-\rho/2}|S|\leq j\leq |S|$
	and define $c_I$ for a set $I\subseteq [n]^k$ as the number of components of $I$.
	Then we compute (and also use Lemma~\ref{lem: bounded degree})
	\begin{align*}
		\sum_{I\subseteq S,\, |I|=j} \probability[I \subseteq C]
		 & \leq \sum_{I\subseteq S,\, |I|=j} m^{-\rho c_I/2} q^{|I|}                    \\
		 & = q^j \cdot \sum_{c=1}^j m^{-\rho c_I/2}|\{I\subseteq S\colon |I|=j,c_I=c\}| \\
		 & \leq q^j \cdot \sum_{c=1}^j   m^{-\rho c/2} (4ekd)^j \binom{|S|}{c}          \\
		 & \leq (4ekdq)^j \cdot \sum_{c=1}^j  \frac{(m^{-\rho/2}|S|)^c}{c!}             \\
		 & \leq (4ekdq)^j \cdot e^{m^{-\rho/2}|S|}
		\leq (4e^2kdq)^j,
	\end{align*}
	as desired.
\end{proof}

\section{Threshold Lemma} \label{section:strongspread-to-threshold}

The goal of this section is to prove \Cref{lemma:fromstrongspreadtothreshold}.
Recall that \Cref{theorem:FKNP} comes with an undesired logarithmic factor.
\Cref{lemma:fromstrongspreadtothreshold} in turn describes specific situations for $(d, \lambda)$-balanced chains that allow us to avoid this factor.

Before giving the proof, we briefly explain our strategy.
We will apply a result by Espuny Díaz and Person~\cite{EP23} (similar results were obtained by Spiro~\cite{Spi23}).
Extending the method of Kahn, Narayanan and Park~\cite{KNP21}, their result gives a general framework where the threshold for certain properties can be estimated as in \Cref{theorem:FKNP}, but avoiding the presence of the logarithmic factor.
The assumption in their result is that the corresponding  hypergraph satisfies more restrictive `spread' properties than in \Cref{theorem:FKNP}; and in our case those properties will be deduced from our assumptions and $(d, \lambda)$-balancedness and strong spread.

We connect our definitions with previous work via the notion of multihypergraphs.
Recall that, for a $(k,\ell,r)$-link $L$, the directed hypergraph $\cH(n,L)$ has vertex set $[n]^k$ and edge set $\{E(C)\colon C\in \cC(n,L)\}$.
We have already seen that a probability measure $\mu$ on $\cC(n,L)$ naturally transfers to a measure on the edge set of~$\cH(n,L)$.
Given $\mu$ (which we assume to attain only rational weights) and $\cH(n,L)$,
one can turn $\cH(n,L)$ into a multihypergraph $\cH_{\text{multi}}(n,L,\mu)$
which arises from $\cH(n,L)$ by duplicating or deleting edges of $\cH(n,L)$ such that
for all $S\subseteq V(\cH(n,L))$
\begin{align*}
	\mu(S)= \frac{|\{e\in E(H_{\text{multi}})\colon e=S\}|}{|H_{\text{multi}}|}.
\end{align*}
This point of view allows us to switch between probability measures and multihypergraphs.

Now we translate the definitions of spread to multihypergraphs.
A multihypergraph $H$ is \emph{$q$-spread} if $|\langle I \rangle \cap H | \leq q^{|I|}|H|$ for any $I\subseteq V(H)$, where $\langle I \rangle$ denotes the upset of $I$.
Moreover, $H$ is \emph{$(q,a,b)$-strongly-spread}
if $H$ is $q$-spread and for all $S\subseteq V(G)$ where $S\subseteq e$ for some $e\in E(H)$ and  $a|S|\leq j \leq |S|$, we have
\begin{align*}
	\sum_{I\subseteq S,\, |I|=j}|\langle I \rangle \cap H | \leq (bq)^{j}|H|.
\end{align*}
For $p \in [0,1]$, we denote by  {$H_p^{\textrm{vx-dlt}}$} the random subgraph obtained from $H$ by keeping every vertex with probability $p$.\footnote{We chose this somewhat cumbersome name to highlight the distinction to the random graph $H_p$, which is obtained by keeping edges with probability $p$.}

Given these preliminaries, we can state the following lemma, which is a slightly modified version of a result by Espuny D\'iaz and Person~\cite[Proof of Lemma 2.1]{EP23}.
We say that a hypergraph is \emph{$b$-bounded} if each edge has size at most $b$.

\begin{lemma}\label{thm:strongly-spread}
	For all $K>0$ and $\alpha,\eps\in (0,1)$,
	there exists $C>0$ such that for all sufficiently large $m\in \mathbb{N}$ the following holds.
	Let $H$ be a $b$-bounded multihypergraph on $m$ vertices with $b=\omega(1)$ and $b=O(\sqrt{m})$.
	Assume $H$ is $(q,b^{-\alpha},K)$-strongly-spread with $q\geq 2b/(Cm)$.
	Then for $p\geq Cq$, we have
	\begin{align*}
		\mathbb{P}[H_p^{\text{\upshape{vx-dlt}}} \text{ contains an edge}]\geq 1- \eps.
	\end{align*}
\end{lemma}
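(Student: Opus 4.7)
The plan is to follow the iterative planting method of Espuny Díaz and Person~\cite{EP23} (itself a refinement of the Kahn--Narayanan--Park approach~\cite{KNP21}), adapted to the vertex-deletion model and to our slightly different spreadness parameters. Write $p = 1 - \prod_{i=0}^{t}(1-p_i)$ with each $p_i$ a suitable constant multiple of $q$ and $t = t(\alpha,K,\eps)$ a finite number of rounds chosen once and for all. Then $H_p^{\text{vx-dlt}}$ has the same distribution as $V_0 \cup V_1 \cup \cdots \cup V_t$, where the $V_i$ are independent $p_i$-random subsets of $V(H)$. It suffices to show that, with probability at least $1-\eps$, some edge of $H$ is contained in this union.

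We then construct inductively a decreasing sequence of sub-multihypergraphs $H = H^{(0)} \supseteq H^{(1)} \supseteq \cdots$ and growing ``planted'' cores $\emptyset = S_0 \subseteq S_1 \subseteq \cdots \subseteq V_0 \cup \cdots \cup V_i$ such that every edge of $H^{(i)}$ contains $S_i$. At round $i$, after revealing $V_i$, we either find an edge of $H^{(i)}$ contained in $S_i \cup V_i$ (and stop), or we select a small ``seed'' $J_i \subseteq V_i \setminus S_i$ with $|J_i| \leq b^{1-\alpha}$ such that the set of edges of $H^{(i)}$ containing $J_i$ is a substantial fraction of $|H^{(i)}|$. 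Setting $S_{i+1} = S_i \cup J_i$ and letting $H^{(i+1)}$ be the corresponding link multihypergraph, the uniformity drops from at most $b_i$ to at most $b_i - |J_i|$. Choosing the parameters so that the uniformity decreases by at least a factor of $b^{-\alpha}$ per round, after $t \approx 1/\alpha$ rounds the residual uniformity is constant; the hypothesis $q \geq 2b/(Cm)$ is then strong enough that a final direct covering argument applied to $V_t$ produces an edge of $H^{(t)}$ with the required probability.

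The role of the strong spread hypothesis is precisely to guarantee that a good seed $J_i$ exists at each step. Plain $q$-spread only controls the expected number of edges fully contained in the sample, which is what forces the $\log m$ factor in the vanilla Frankston--Kahn--Narayanan--Park argument. The condition $\sum_{I \subseteq S,\, |I|=j} |\langle I\rangle \cap H| \leq (Kq)^j |H|$ for $j \geq b^{-\alpha}|S|$ instead bounds the number of ``partial covers'' of intermediate size, so with positive probability a random subset of $V_i$ of size $b^{1-\alpha}$ is a seed whose link $H^{(i+1)}$ is still a bounded fraction of $H^{(i)}$. One then verifies that $H^{(i+1)}$ inherits $(q, (b-|J_i|)^{-\alpha}, K')$-strong-spreadness for a constant $K'$ depending only on $K$ and the previous $K$'s, so that the induction closes over the $t = O(1)$ rounds.

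The main obstacle is the bookkeeping of this induction: one must show that at each round the failure probability is at most $\eps/(2t)$, that a seed of the required size can indeed be found inside $V_i$ with probability close to $1$ (which needs a Markov-type application to the strong-spread sum above), and that the inductive invariants on both density and strong spread are preserved. All of this is essentially carried out in~\cite[Proof of Lemma 2.1]{EP23}; the only genuinely new input here is the base case of the iteration, where the bounds $b = \omega(1)$, $b = O(\sqrt{m})$, and $q \geq 2b/(Cm)$ must be combined to give the final single-round covering step with success probability at least $1 - \eps/2$.
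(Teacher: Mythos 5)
The paper does not actually prove this lemma; it presents it as ``a slightly modified version of a result by Espuny D\'iaz and Person~\cite[Proof of Lemma 2.1]{EP23}'' and cites that proof directly. So at the level of approach your choice to follow the Espuny D\'iaz--Person iterative planting scheme, while deferring the details to their paper, matches what the authors do: both the paper and your proposal ultimately black-box~\cite{EP23}.

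That said, there is a genuine internal inconsistency in the way you describe the iteration, and it touches precisely the point that removes the logarithmic factor. You say the seed $J_i$ satisfies $|J_i| \leq b^{1-\alpha}$ and that the uniformity drops from $b_i$ to $b_i - |J_i|$. Under that reading the uniformity decreases only additively by at most $b^{1-\alpha}$ per round, so going from $b$ to $O(1)$ would take $\Omega(b^{\alpha})$ rounds, not the constant $t \approx 1/\alpha$ you claim in the very next sentence. For the constant-round count (and hence the removal of the $\log$ factor) to be correct, the planted fragment must cover almost all of each surviving edge: the seed should have size roughly $b_i(1 - b^{-\alpha})$, so that the residual has size about $b_i b^{-\alpha}$ and the uniformity after $t$ rounds is $b^{1 - t\alpha} = O(1)$ once $t \approx 1/\alpha$. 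This is exactly what the strong-spread hypothesis with parameter $a = b^{-\alpha}$ is designed to deliver, since it controls partial covers $I \subseteq S$ of size $j$ all the way down to $j \approx b^{-\alpha}|S|$, which is what lets one locate such a large fragment inside the sprinkled set with good probability at each step. As written, you have swapped the roles of the seed and the residual, and the sentence ``a random subset of $V_i$ of size $b^{1-\alpha}$ is a seed whose link $H^{(i+1)}$ is still a bounded fraction of $H^{(i)}$'' has the quantifiers on the wrong side. This is not a cosmetic issue: with the small-seed reading your iteration does not close in $O(1)$ rounds, and the argument reverts to the bound with the logarithmic overhead that the lemma is supposed to avoid.
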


\begin{proof}[Proof of \Cref{lemma:fromstrongspreadtothreshold}]
	Note that what we need to show is equivalent to the following: given the assumptions on $L$ and $\mu$, for every $\varepsilon > 0$, there exists $C' = C'(\varepsilon)$ such that for $q \geq C' n^{-1/d}$, a $q$-binomial subset of $[n]^k$ contains an element of $\mathcal{H}(n, L, \mu)$ with probability at least $1 - \varepsilon$.

	Let $\varepsilon > 0$ be arbitrary.
	Let $C$ satisfy the requirements of \cref{thm:strongly-spread} under the inputs $K', \alpha, \varepsilon, b = e(L)n/r$.
	Set $H=\cH_{\text{multi}}(n,L,\mu)$, and note that $H$ has $m = n^k $ vertices.
	Also, $H$ is $b$-bounded with $b=e(L)n/r$, which follows from the definition of closed $L$-chains.
	By our assumptions on $L$-chains we have $e(L) \neq 0$ and $k \geq 2$, so $b = \omega(1)$ and $b = O(\sqrt{m})$ hold.

	Recall that $L$ is non-empty and that sufficiently large closed $L$-chains are $(d, \lambda)$-balanced.
	We claim that $d > 1/(k-1)$.
	Indeed, consider a subgraph $I$ of an $L$-chain consisting of exactly one edge of size $k$.
	Then $I$ has $k$ vertices and one edge. By $(d, \lambda)$-balancedness, we get that $1 \leq d k - \lambda < d (k-1)$, which implies the claim.

	Moreover, $H$ is $(K' n^{-1/d},(e(L)n/r)^{-\alpha},K')$-strongly-spread.
	Note that $K' n^{-1/d} \geq 2(e(L)n/r)/(Cn^k) = 2b/(Cm)$, where we used that $d \geq 1/(k-1)$ in the first inequality.
	Hence the conclusion (with $C' = C K'$) follows from \cref{thm:strongly-spread}.
\end{proof}

\section{Conclusion}
\label{sec:conclusion}

In this paper, we investigated the robustness of Hamiltonicity in hypergraphs under random sparsifications.
Our main result, \cref{thm:main}, states that Hamiltonicity is inherited to a random sparsification of a host graph $G$, provided that $G$ robustly exhibits Hamilton connectedness at the local level.
As a consequence, we obtain (random) robustness versions of many classic theorems and several recent breakthroughs.
Importantly, our method works in settings (such as $d$-degree conditions for $d < k-1$) where the host graph is not necessarily connected, which was an obstacle in previous works.

Our work also implies counting versions of these results, which can be easily derived from the existence of spread distributions (see, for example \cite[Section 1.4.1]{KMP23} for such a deduction).
For instance, this shows that there are at least $\exp(n \log n - O(n))$ Hamilton $\ell$-cycles under the minimum $d$-degree conditions of \cref{thm:tight-hamilton-cycles-codegree,thm:tight-hamilton-cycles-k-2-degree,thm:loose-hamilton-cycles-codegree,thm:loose-hamilton-cycles-l=1}.
This recovers and extends the work of Glock, Gould, Joos, Kühn and Osthus~\cite{GGJKO2020}, Montgomery and Pavez-Signé~\cite{MP2023} and Kelly, Müyesser and Pokrovskiy~\cite{KMP23}.
For a more detailed discussion of the subject, we refer the reader to the work of Montgomery and Pavez-Signé~\cite{MP2023}.

The applications in \cref{sec:applications} illustrate the broad range of scenarios in which \cref{thm:main} can be used to establish robustness for Hamiltonicity.
The list of outcomes is by no means exhaustive and we believe that many further applications are possible.
For instance, it is not hard to deduce that Hamiltonicity is preserved in sparsifications of robust expanders~\cite{KOT2010}, loose cycles in quasirandom hypergraphs~\cite{LMM16} or powers of tight cycles in hypergraphs~\cite{PSS23}.
{Our outcomes do not take advantage of the fact that \cref{thm:main} is formulated in terms of directed hypergraphs.
An illustrative application for this would be a robust version of the Dirac-type theorem for oriented graphs due to Keevash, Kühn and Osthus~\cite{KKK09}, which can be derived from \cref{thm:main} in a similar way as the results of~\cref{sec:loose-cycles-proofs}}.

Key to all of these applications is the notion of Hamilton connectedness.
Abstractly speaking, when it comes to using Hamiltonicity (in the broad sense of this paper) as an input condition, then Hamilton connectedness appears to be the `right' form of it.
In related work~\cite{LS24a}, we provide a structural decomposition of this property together with a series of new applications, which then thanks to \cref{thm:main} receive an immediate `upgrade' to the random robustness setting.

Looking ahead, multiple open questions come to mind.
Our main result is formulated in terms of coarse thresholds.
We believe that this can be improved to semi-sharp thresholds as in the work of Kahn, Narayanan and Park~\cite{KNP21}.
Beyond quantitative considerations, it would be quite interesting to know whether similar results are possible in other settings that related random graphs with deterministic conditions.
For instance, is it possible to formulate and prove an analogue of \cref{thm:main} for the (random) resilience setting, which was popularised Sudakov and Vu~\cite{SV08}?
Or can we capture some of the progress in the field of random perturbations, initiated by Bohman, Frieze and Martin~\cite{BFM03}, in a similar way?

Apart from these `meta-questions', there are also many interesting concrete problems.
Among the most natural is the question how robustness relates to decomposition problems.
For instance, it is known that the minimum degree threshold $\mu$ for the existence of a triangle-decomposition of a graph is somewhere between $0.75$ and $0.83$~\cite{DP21}.
Provided that the trivial divisibility conditions are met, does a random sparsification of an $n$-vertex graph $G$ with $\delta(G) \geq (\mu n + o(1))n$ still contain a triangle-decomposition?

\bibliographystyle{amsplain}
\bibliography{bibliography}

\end{document}